\newcommand{\dx}{\mathrm{d}x}
\newcommand{\dy}{\mathrm{d}y}
\newcommand{\dt}{\mathrm{d}t}
\newcommand{\R}{\mathbb{R}}
\newcommand{\RN}{\mathbb{R}^N}
\DeclareMathOperator{\supp}{supp}
\newcommand{\eps}{\epsilon}
\newcommand{\un}{\mathbb{I}}
\newcommand{\DG}{\mathcal{D}_G}
\newcommand{\BG}{\mathcal{B}_G}
\newcommand{\QG}{\mathcal{Q}_G}
\newcommand{\Epm}{\mathcal{E}_\pm}
\newcommand{\Ep}{\mathcal{E}_+}
\newcommand{\get}{\mathfrak{G}}
\newcommand{\fipm}{\varphi_\pm}
\newcommand{\fip}{\varphi_+}
\newcommand{\fim}{\varphi_-}
\newcommand{\Qint}{Q_{\mathrm{int}}}
\newcommand{\Qout}{Q_{\mathrm{out}}}
\newcommand{\QGint}{Q_G^{\mathrm{int}}}
\newcommand{\QGout}{Q_G^{\mathrm{out}}}
\newcommand{\ufi}{u_\varphi^\pm}
\newcommand{\ufip}{u_\varphi^+}
\newcommand{\ufim}{u_\varphi^-}
\newcommand{\bufim}{\bar{u}_\varphi^-}
\newcommand{\fik}{\varphi_k}
\newcommand{\fikmun}{\varphi_{k-1}}
\newtheorem{definition}{Definition}[section]
\newtheorem{theorem}{Theorem}[section]
\newtheorem{proposition}{Proposition}[section]
\newtheorem{lemma}{Lemma}[section]
\theoremstyle{remark}
\newtheorem{remark}{Remark}
\title{Regularity of solutions of a fractional porous medium equation}
\author{ Cyril Imbert\footnote{CNRS, \'Ecole normale supérieure, 45
    rue d'Ulm 75005 Paris, France, Mail: Cyril.Imbert@ens.fr..}\:,
  Rana Tarhini\footnote{Universit\'e Paris Est, Laboratoire d'Analyse
    et de Math\'ematiques Appliqu\'ees UMR 8050, 61 avenue du
    G\'en\'eral de Gaulle, 94010 Cr\'eteil, France, Mail:
    rana.tarhini@math.cnrs.fr.}\:,
  François Vigneron\footnote{Universit\'e Paris Est, Laboratoire
    d'Analyse et de Math\'ematiques Appliqu\'ees UMR 8050, 61 avenue
    du G\'en\'eral de Gaulle, 94010 Cr\'eteil, France, Mail:
    francois.vigneron@u-pec.fr.}  }
\begin{document}
\maketitle

\begin{abstract}
  This article is concerned with a porous medium equation whose pressure
  law is both nonlinear and nonlocal, namely
  \[
  \partial_t u = {  \nabla \cdot} \left(u \nabla(-\Delta)^{\frac{\alpha}{2}-1}u^{m-1} \right)
  \] 
  where $u:\mathbb{R}_+\times \mathbb{R}^N \to \mathbb{R}_+$, for $0<\alpha<2$ and $m\geq2$.
  We prove that the $L^1\cap L^\infty$ weak solutions constructed by Biler, Imbert and Karch (2015) 
  are locally Hölder-continuous in time and space.
  In this article, the classical parabolic De Giorgi techniques for the regularity of PDEs are tailored to fit this particular variant of the PME equation.
  In  the spirit of the work of Caffarelli, Chan and Vasseur (2011), 
  the two main ingredients are the derivation of local energy estimates
  and a so-called  ``intermediate value lemma''.
  For $\alpha\leq1$, we adapt the proof of Caffarelli,
  Soria and V\'azquez~(2013), 
  who treated the case of a linear pressure law.
  We then use a non-linear drift to cancel out the singular terms that would otherwise appear in the energy estimates.
  
\bigskip\noindent%
\textbf{Keywords:} Parabolic regularity, De Giorgi method, porous medium equation (PME), H\"older regularity, non local
operators, fractional derivatives.\\[1ex]
\textbf{MSC Primary:} 35B65.\\
\textbf{MSC Secondary:} 76S05, 35K55, 45P05, 45K05, 47G10.
\end{abstract}

\section{Introduction}

In this work, we study the regularity of non-negative weak solutions of the following degenerate
nonlinear nonlocal evolution equation
\begin{equation} \label{e:main}
\partial_t u = {  \nabla \cdot} \left(u \nabla^{\alpha-1}G(u) \right), \quad t >0, \quad  x \in \mathbb{R}^N, 
\end{equation}
where $G(u) = u^{m-1}$ with $m \ge 2$.
The equation is supplemented with initial data
\begin{align} \label{e:ic}
u(0,x) = u_0(x),
\end{align}
which we will assume to be both non-negative and integrable on $\RN$.

 \bigskip For $\alpha \in(0,2)$, the symbol $\nabla^{\alpha-1}$
 denotes the integro-differential operator
 $\nabla(-\Delta)^{\frac{\alpha}{2}-1}$.  It is a nonlocal operator of
 order $\alpha-1$. For a smooth and bounded function $v$, it has the
 following singular integral representation
 \begin{equation}\label{fractional}
\nabla^{\alpha-1} v(x)= c_{ \alpha,N} \int_{ \RN} \left(v(y)-v(x) \right)\frac{y-x}{|y-x|^{N+\alpha}} dy
\end{equation} 
with a suitable constant $c_{\alpha,N}$. Moreover, we have
${\nabla\cdot} \nabla^{\alpha-1} = -(-\Delta)^\frac{\alpha}{2}.$

Our main result is the H\"older regularity of weak solutions of
\eqref{e:main}. For short, let us write $Q_T = (0,T)\times\RN$.
\begin{definition}[Weak solutions]
  A function $u : Q_T \rightarrow \mathbb{R}$ is a weak solution of
  \eqref{e:main}-\eqref{e:ic} if $u \in L^1(Q_T)$,
  $\nabla^{\alpha-1}(|u|^{m-2}u)\in L^1_{\operatorname{loc}}(Q_T)$,
  $|u|\nabla^{\alpha-1}(|u|^{m-2}u)\in L^1_{\operatorname{loc}}(Q_T)$
  and
\begin{equation}\label{weak}
\iint u \partial_t\varphi \dt\dx - \iint |u|\nabla^{\alpha-1}(|u|^{m-2}u).\nabla\varphi \dt\dx = -\int u_0(x)\varphi(0,x)\dx
\end{equation}
for all test functions $\varphi \in C^\infty(Q_T)\cap C^1(\bar{Q}_T)$ { with} compact support in the space
variable $x$ and { that} vanish near $t = T$.
\end{definition}
\begin{theorem}[Hölder regularity] \label{thm:main} Let us assume that
  $\alpha \in (0,2)$ and $m \ge 2$.  For any initial data
 \[ u_0\in L^1\cap L^\infty(\RN;\mathbb{R}_+),\] weak solutions $u$
  of~\eqref{e:main}-\eqref{e:ic} are H\"older continuous at strictly
  positive times. More precisely, there is  $\beta \in (0,1)$
  depending only on $N$, $m$ and $\alpha$ such that for all $T_0,T_1 >0$ with $T_0 < T_1$, 
\begin{equation}
[u]_{C^{\beta}([T_0,T_1]\times\RN)} \leq C \|u\|_{L^\infty([T_0/2,T_1]\times\RN)}
\end{equation}
where $C$ only  depends on $N$, $m$ and $\alpha$ and $T_0$.
\end{theorem}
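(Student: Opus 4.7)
The plan is to implement the parabolic De Giorgi scheme in the spirit of Caffarelli-Chan-Vasseur (2011) and, for the singular range $\alpha \leq 1$, of Caffarelli-Soria-Vázquez (2013). The three ingredients are local energy estimates on truncations, a De Giorgi ``first lemma'' propagating boundedness from density, and an ``intermediate value lemma'' that powers an oscillation reduction. Hölder regularity at any $T_0 > 0$ then follows from iterating the oscillation reduction on nested parabolic cylinders and summing a geometric series.

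First I would test the weak formulation~(\ref{weak}) with truncations of the form $\psi^2(u-k)_\pm$, where $\psi$ is a parabolic cutoff and $k$ a threshold. The factor $u$ in front of $\nabla^{\alpha-1}u^{m-1}$ combined with the singular integral representation~(\ref{fractional}) produces, after symmetrization, a non-negative nonlocal quadratic form in the truncated differences that behaves like a weighted $H^{\alpha/2}$ seminorm. The algebraic price is to split the bilinear integrand on $\{u>k\}\times\{u>k\}$ and its asymmetric counterpart; this is manageable because $m\geq 2$ ensures $u^{m-1}-k^{m-1}$ and $(u-k)_+$ are comparable on the upper level set. For $\alpha\leq 1$, the bilinear form yields a first-order singular contribution that cannot be absorbed into the good terms; the fix, following Caffarelli-Soria-Vázquez, is to add and subtract a nonlinear drift $b\cdot\nabla u$ on both sides of the equation, with $b$ built from $\nabla^{\alpha-1}u^{m-1}$, chosen so that the genuinely singular contributions cancel against the transport term, leaving a drift whose contribution to the energy is estimable by the uniform $L^\infty$ bound on $u$.

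Armed with energy estimates, I would run the De Giorgi iteration: choose a geometric sequence $k_j\nearrow k_\infty$ of truncation levels and a shrinking family of cylinders $Q_j$, then use the Sobolev embedding of the fractional seminorm to obtain a nonlinear recursion on $\|(u-k_j)_+\|_{L^2(Q_j)}$. Provided the initial relative measure $|\{u>k_0\}\cap Q_0|/|Q_0|$ falls below a universal threshold, the recursion forces the energy to zero and delivers the first oscillation lemma. The far-field tail contributions from~(\ref{fractional}) are handled thanks to the global $L^\infty$ bound $\|u\|_{L^\infty([T_0/2,T_1]\times\RN)}$.

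The main obstacle, and the decisive step, is the intermediate value lemma, which removes the smallness assumption on the density of the good set. The idea is that if the set where $u$ is close to its supremum and the set where $u$ is close to its infimum both have non-trivial density in $Q_1$, then the set on which $u$ takes intermediate values must itself have sizable measure; combining an isoperimetric/De Giorgi-type inequality with a Caccioppoli estimate on an intermediate truncation forces a contradiction unless one of the two sets is in fact small, in which case the first lemma applies and yields $\operatorname{osc}_{Q_{1/2}}u\leq (1-\theta)\operatorname{osc}_{Q_1}u$ for a universal $\theta\in(0,1)$. The nonlocal character of $\nabla^{\alpha-1}$ interacts most delicately with the weight $u$ in this step: controlling the interaction between the intermediate level set and the tail $\int_{|y|\geq R}(u^{m-1}(y)-u^{m-1}(x))|y-x|^{-N-\alpha}\,dy$ requires a careful choice of truncation scales adapted to both the nonlinearity $G(u)=u^{m-1}$ and the order $\alpha$. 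Iterating the oscillation reduction on dyadic parabolic cylinders then yields a Hölder exponent $\beta=\beta(N,m,\alpha)$ and the quantitative bound of the theorem, the $T_0$-dependence of the constant arising from the time required to propagate the $L^\infty$ bound down to the scale on which the intrinsic parabolic cylinders of the equation are defined.
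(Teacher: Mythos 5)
Your outline captures the broad architecture of the paper's argument: local energy estimates, first De Giorgi lemmas, an intermediate value lemma, and an iteration of the oscillation reduction along rescaled cylinders, with a drift correction for $\alpha\leq 1$. However, several of the genuinely delicate technical devices are missing or replaced by choices that would not survive the degeneracy of the equation.

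First, the test function. You propose testing the weak formulation with $\psi^2(u-k)_\pm$. That is the standard parabolic De Giorgi choice, but here it fails to produce a coherent energy: multiplying $\partial_t u = \nabla\cdot(u\nabla^{\alpha-1}G(u))$ by $\psi^2(u-k)_+$ generates the transport term $\int u\,\psi^2\nabla^{\alpha-1}G(u)\cdot\nabla(u-k)_+$, in which the Darcy weight $u$ prevents the integration by parts from producing the bilinear form on truncated differences that the iteration requires --- $u\nabla(u-k)_+ = \nabla\bigl(\tfrac12(u^2-k^2)_+\bigr)$, so the dissipation involves $(u^2-k^2)_+$ while the time derivative acts on $(u-k)_+^2$, and the two do not close. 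The paper circumvents this by introducing the alternate energy functional $\Epm(t)=\int H\bigl(1\pm(u-\fipm)_\pm/\fipm\bigr)\fipm$ with the convex $H$ satisfying $H''(r)=r^{-1}$; the identity $H''\bigl(1\pm(u-\fipm)_\pm/\fipm\bigr)\,u=\fipm$ on the relevant level set is precisely what cancels the degenerate factor and makes the energy balance \eqref{e:E'start} clean. Without this observation (which comes from \cite{CSV}) your energy step does not go through.

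Second, the cut-off functions must be allowed to grow mildly at infinity, because the dyadic ``zoom-in and renormalize'' sequence $v_{n+1}(t,x)=v_n(t/\tau_n,x/\kappa_n)/(\tau_n/\kappa_n^\alpha)^{1/(m-1)}$ does not preserve a global $L^\infty$ bound. After rescaling, the solution is only controlled by $1+\Psi_\epsilon(x)$ with $\Psi_\epsilon(x)=(|x|^\epsilon-2)_+$, and the nonlocal tails must be absorbed against this weight, not against $\|u\|_\infty$. Your sentence ``the far-field tail contributions are handled thanks to the global $L^\infty$ bound'' elides this issue, which is responsible for much of the care taken in lemmas~\ref{l:ee1}--\ref{l:ee7+} of the paper, and for the constraint $\epsilon<\epsilon_0$.

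Third, the step from the intermediate value lemma to the oscillation reduction is more than ``the first lemma applies.'' The paper's lemma~\ref{lvi} gives a fixed amount of space-time measure spent in each intermediate band $[1-\lambda^{2j}/2,\,1-\lambda^{2j+2}/2]$. Since these bands are disjoint, only finitely many can have full measure, which, by contradiction, produces one scale $j_1\leq j_0$ at which the mostly-low-values hypothesis \eqref{e:mostlylow} of lemma~\ref{l:DG1-low} is satisfied for the pushed-down/rescaled function $u_{j_1}$, but only at late times. An additional parabolic rescaling $\tilde u(t,x)=u(t/2^\alpha,x/2)$ is then needed before the crude lowering lemma applies on $(-2,0]\times B_2$. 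This two-step improvement (lemma~\ref{l:DG1-low-improved}) is what makes the alternative in \S\ref{mainproof} clean, and it is not captured by ``forces a contradiction unless one of the two sets is in fact small.''

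Finally, the drift for $\alpha\leq 1$: the paper does not add and subtract a transport term $b\cdot\nabla u$; it performs a time-dependent spatial translation $\bar u(t,x)=u(t,x-s(t))$ with $s'(t)$ built from the outer part of the singular integral of $G(\bar u)$ (see \eqref{uDrift2}). The spirit is close to yours, and the effect is the same (the diverging outer kernel in $\QGout$ is replaced by the convergent difference $\frac{y-x}{|y-x|^{N+\alpha}}-\frac{y\chi(|y|)}{|y|^{N+\alpha}}$), but the precise structure matters when one later needs the rescaling of \S\ref{mainproof} to commute with the drift.
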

\begin{remark}
  Weak solutions have been constructed in \cite{BIK} under the assumptions of theorem~\ref{thm:main} and even
  for a range of values of $m<2$ as well.
  For a precise statement, see theorem~\ref{existence} below.
  Our proof might be adapted to those small values of $m$, but it  will require some modifications and additional work.
  The key change would be the loss of convexity of $G$, which would immediately void~\eqref{e:DG}, \eqref{e:DGconvex} and \eqref{e:DG2}.
\end{remark}
\begin{remark}
  The (linear) case $m=2$ was treated in \cite{CSV,CV}  for any $\alpha\in(0,2)$.
\end{remark}
\begin{remark}
  One could also study the regularity of unsigned weak solutions of the equation
  \eqref{BIK} below, which is the unsigned version of the equation~\eqref{e:main}.
  Since the subsequent proof is local, it is probably possible to extend our result in this direction.
\end{remark}

\paragraph{Review of the literature.}

Let us briefly recall how the porous medium equation is derived from
the law of conservation of mass, for a gas propagating in a
homogeneous porous medium \cite{aronson, Vaz}:
\begin{equation*}
\partial_t u + {\nabla \cdot} (uv) = 0.
\end{equation*} 
In this equation, $u \geq 0$ denotes the density of the gas and
${ v\in\RN}$ is the locally averaged velocity.  Darcy's law states
that $v = -\nabla p$, where $p$ denotes the pressure. Finally, the
pressure law implies that $p$ is a monotone operator of $u$ \textit{i.e.}
$p = f(u)$. This leads us to the following equation
\begin{equation}
\partial_t u = {\nabla \cdot} (u\nabla f(u)).
\end{equation}
The case $p = u$ is the simplest  pressure law and leads to the
  Boussinesq's equation \cite{Bear, Boussinesq}:
\begin{equation}
\partial_t u = c \Delta(u^2).
\end{equation}

\medskip  L.~Caffarelli and J. L.~V\'azquez \cite{CafVaz} studied the following equation:
\begin{eqnarray} \label{CSV}
\partial_t u =
{ \nabla \cdot} \left(u \nabla (-\Delta)^{-s}u \right), \quad t > 0, \quad x \in \mathbb{R}^N.
\end{eqnarray}
This equation was proposed by \cite{CafVaz} to add long-distance effects in the physical model
(for further details, see the motivations therein).  They study this problem
with non-negative initial data that are integrable and
decay at infinity. For $s=\frac{2-\alpha}{2}\in(0,1)$ and $m = 2$, our
equation \eqref{e:main} coincides with \eqref{CSV}.

The existence of mass-preserving non-negative weak solutions satisfying
energy estimates has been proved in \cite{CafVaz}. Such solutions
  have a finite propagation speed.  Their asymptotic behavior as
$t\rightarrow \infty$ has been studied in \cite{CafVaz1}. Moreover, in
\cite{CSV} and \cite{CV}, the boundedness and the Hölder regularity
of non-negative solutions have been obtained for $s \in (0,1).$

 The proof of the Hölder regularity in the range
$s \in (0,1/2)$ is based on De Giorgi-type oscillation lemmas and 
  on the scaling property (see \eqref{scaling} below) of the equation.  For a
  general review of the De Giorgi method for classical elliptic and
parabolic equations, { we refer for instance to}~\cite{DeGiorgi},
\cite{Lieberman}, \cite{Vasseur} and~\cite{CafVas}.  The regularity
result in the case $s \in (1/2,1)$, which corresponds to
$\alpha \in (0,1)$ { for us}, is more difficult due to convection
effects that appear and make some integrals diverge. The method
proposed in \cite{CSV} { consists in a} geometrical transformation 
  that absorbs the uncontrolled growth of one of the integrals that
appear in the iterated energy estimates.

The most delicate situation, which is the case $s = 1/2$, has been
treated in \cite{CV}. The authors performed an iteration analysis that
combines consecutive applications of scaling and geometrical
transformations.

 A similar De Giorgi method is also used in \cite{CCV} to prove 
  the Hölder regularity for nonlinear nonlocal time-dependent
variational equations. In this case however, $-u$ satisfies the
same equation as $u$, which slightly simplifies the proof.

\bigskip In \cite{BIK1} and \cite{BIK}, P.~Biler, C.~Imbert and
G.~Karch consider a problem similar to \eqref{e:main}-\eqref{e:ic},
but with $u$ unsigned. They prove, under some conditions on $m$ (see
\eqref{cond:m} below), the existence of bounded and mass-preserving
weak solutions for the Cauchy problem
\begin{equation} \label{BIK}
\partial_t u =
{\nabla \cdot} \left(|u| \nabla^{\alpha-1}(|u|^{m-2}u) \right), \quad t > 0, \quad x \in \mathbb{R}^N,
\end{equation}
with initial condition 
\begin{equation} \label{init}
u(0,x) = u_0(x)
\end{equation}
where $u_0$ is an integrable but not necessarily positive function on
$\mathbb{R}^N$. Moreover, they show that the solution $u$ is
non-negative if the initial condition $u_0$ is too, in which case the
solution is a solution of our problem \eqref{e:main}-\eqref{e:ic}.
In the sequel of this paper, this existence result is our starting point.
The finite speed of propagation for these non-negative weak solutions has
been proved in \cite{I} and holds under the same conditions on $m$.

\bigskip A variant of the porous medium equation with both a
fractional potential pressure and a fractional time derivative has
been studied in \cite{FPMET}:
\begin{eqnarray*}
D^\alpha_t u - { \nabla\cdot}(u\nabla(-\Delta)^{-\sigma}u) = f,
\end{eqnarray*}
where $D^\alpha_t$ is a Caputo-type time derivative. The authors study
both the existence and  the H\"older regularity of the
solutions,  using the De Giorgi method as in \cite{CSV}.

\paragraph{Organization of the paper and general ideas.}

This paper is organized as follows:
\begin{itemize}
\item In Section 2 we recall briefly how the existence theorem~\ref{existence} was
    established.
\item Section 3 is devoted to the local energy estimates satisfied by
  a bounded weak solution. We first derive general energy estimates
  (theorem~\ref{thm:EE}) and then we localize them and we improve them
  by estimating in a more precise way the ``dissipation'' terms
  (proposition~\ref{prop:EE-local}). In this section, we separated the 
  arguments for $\alpha\in(1,2)$ from the ones for $\alpha\in(0,1]$.
\item In Section 4 we prove the first lemmas of De Giorgi.  The idea
    is that a direct application of the energy estimate along a
    sequence of macroscopic space-time balls leads to a point-wise
    upper-bound, provided that the measure of the set where $u$ is
    small is \textsl{sufficiently large}. Similarly, one can get a
    point-wise lower-bound from knowing that $u$ is large enough on a
    large set.

  In Section 5, we move on to the lemma on intermediate values. 
    It roughly claims that if both the sets where $u$ is small and
    where $u$ is large are substantial measure-wise, then, thanks to
    the ``good extra term'' of the local energy balance, $u$ also has
    to spend a substantial space-time in between. In naive words, we
    quantify the cost of oscillations.

    In Section 6, this idea allows us to subtly improve the first
    lemma of De Giorgi: the point-wise upper bound can be ascertained
    provided only that the measure of the set where $u$ is small is
    \textsl{not too small}. The proof comes naturally by
    contradiction: if the upper bound could not be improved, then too
    much energy would be lost in the oscillations induced between the
    maximal point and the low values set. Section 6 seems to be a
    subtle refinement of Section 4, but it suffices to prove
    theorem~\ref{thm:main}.
\item In Section~7,  one follows a ``zoom-in and enlarge'' sequence
    of solutions, along which the oscillation is controlled either
    from above by the refined first De Giorgi lemma of Section 6 or from
    below by the crude one of Section 4.  The improvement of Section~6
    was needed to have a clean alternative at this point.  This scheme leads
    directly to the Hölder regularity of the solution $u$.
\end{itemize}

\section{Preliminaries}\label{preliminaries}

\paragraph{Notations.}  

In this work, we denote by $B_r$ the ball of $\RN$ of radius $r > 0$
and of center 0.
For any measurable function $v$ we define its positive and negative part by:
\begin{equation}\label{vplusmoins}
v_+ = \max(0,v) \quad\text{and}\quad v_- = \max(0,-v).
\end{equation}
We will often use the following notation and identities:
\[a\vee b=\max\{a,b\}=a+(b-a)_+ \quad\text{and}\quad a\wedge b = \min\{a,b\}=a-(b-a)_-.\]

The fractional Laplacian has the following singular integral expression:
\begin{equation}\label{fractional_laplace}
(-\Delta)^{\alpha/2}v(x) = - \int_{\RN} (v(y)-v(x)) \frac{c^0_{\alpha,N}}{|x-y|^{N+\alpha}}
dy,
\end{equation}
where $c^0_{\alpha,N}$ is a constant only depending on $\alpha$ and $N$. 

 Finally, let us point out that we will usually specify the domain of each integral,
except for double space integrals, where $\iint f(x,y)\dx\dy$
will denote an integral over $\RN\times\RN$, unless stated otherwise.

\paragraph{Weak   solutions.}

The existence of positive weak solutions for our Cauchy problem at
hand~\eqref{e:main}-\eqref{e:ic} was proved in \cite{BIK}. 
\begin{theorem}[{Existence of weak solutions, from \cite[{theorem~2.6}]{BIK}}]\label{existence}
Let $\alpha \in (0,2)$ and
\begin{equation}\label{cond:m}
m > \max\left\{1 + \frac{1-\alpha}{N}  ; 3-\frac{2}{\alpha}\right\}.
\end{equation}
For any $u_0 \in L^1(\RN;\mathbb{R}_+)$, the Cauchy problem \eqref{e:main}-\eqref{e:ic} admits
a weak solution $u$ on $(0,+\infty)\times\RN$.
Moreover,
\begin{equation}
\int_{\RN} u(t,x) \dx = \int_{\RN} u_0(x) \dx
\end{equation}
and for each $p \in [1,\infty]$ and $t>0$
\begin{align} \label{boundedness}
\Vert u(t)\Vert_p \leq \min\left\{C_{N,\alpha,m} \Vert u_0\Vert_1^\frac{N(m-1)/p+\alpha}{N(m-1)+\alpha} t^{-\frac{N}{N(m-1)+\alpha}\left(1-\frac{1}{p}\right)} ; \Vert u_0\Vert_p \right\}.
\end{align}
The constant $C_{N,\alpha,m}$ is independent of $p$, $t$ and $u_0$.
\end{theorem}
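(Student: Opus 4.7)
The proof follows the standard regularization and compactness scheme for degenerate parabolic equations with nonlinear nonlocal flux. First, I would regularize both the equation and the data: replace \eqref{e:main} by
\[
\partial_t u_\eps = \nabla\cdot\bigl(u_\eps \nabla^{\alpha-1}G(u_\eps)\bigr) + \eps\,\Delta u_\eps,
\]
with smoothed initial datum $u_{0,\eps}=u_0\ast\rho_\eps$, and truncate the kernel of $\nabla^{\alpha-1}$ at scale $\eps$ so that the induced velocity field becomes bounded and Lipschitz. For this non-degenerate quasilinear parabolic problem, existence on $(0,T)\times\RN$ follows from standard theory, e.g.\ a Galerkin scheme or a Banach fixed-point in a suitable Bochner space. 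Non-negativity is preserved by testing against $-(u_\eps)_-$: the divergence structure and the smoothness of the coefficients force $\int (u_\eps)_-^2\dx$ to remain zero.

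The analytic heart of the argument is to extract the uniform bounds of \eqref{boundedness}. Testing the regularized equation by $u_\eps^{p-1}$ and integrating by parts, using $\nabla\cdot\nabla^{\alpha-1}=-(-\Delta)^{\alpha/2}$, yields
\[
\tfrac{d}{dt}\int \tfrac{u_\eps^p}{p}\dx + \tfrac{p-1}{p}\int u_\eps^p\,(-\Delta)^{\alpha/2}u_\eps^{m-1}\dx \le 0.
\]
A Stroock--Varopoulos-type inequality then provides the lower bound
\[
\int u_\eps^p\,(-\Delta)^{\alpha/2}u_\eps^{m-1}\dx \ge c_{p,m}\,\bigl\|(-\Delta)^{\alpha/4}u_\eps^{(p+m-1)/2}\bigr\|_{L^2}^2,
\]
and combining with the fractional Sobolev embedding $\dot H^{\alpha/2}\hookrightarrow L^{2N/(N-\alpha)}$ and interpolation with mass conservation produces a Nash-type differential inequality for $\|u_\eps(t)\|_{L^p}$. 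Direct integration yields both the monotonicity bound $\|u_\eps(t)\|_p\le\|u_{0,\eps}\|_p$ and the smoothing decay $t^{-\frac{N}{N(m-1)+\alpha}(1-1/p)}$ with the interpolated prefactor of $\|u_0\|_1$ announced in \eqref{boundedness}. The algebraic conditions \eqref{cond:m} on $m$ are precisely what is needed for the resulting Nash exponent to be strictly positive (the $m>1+(1-\alpha)/N$ threshold) and for the Stroock--Varopoulos constant $c_{p,m}$ to remain non-degenerate in the relevant range of $p$ when $\alpha$ is close to $2$ (the $m>3-2/\alpha$ threshold).

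The final step is to pass to the limit $\eps\to 0$. The uniform $L^1\cap L^\infty$ bounds together with the dissipation estimate place $u_\eps^{(p+m-1)/2}$ in a bounded subset of $L^2_{\mathrm{loc}}((0,\infty);\dot H^{\alpha/2})$, while the equation itself controls $\partial_t u_\eps$ in a suitable negative Sobolev space. The Aubin--Lions lemma then extracts a subsequence converging strongly in $L^1_{\mathrm{loc}}$ to some $u\ge 0$; the $L^p$ bounds and mass conservation pass to $u$ by lower semi-continuity and integration. The main obstacle, as I see it, is the passage to the limit in the nonlinear nonlocal flux $u_\eps\nabla^{\alpha-1}u_\eps^{m-1}$, which couples a strongly converging factor with a weakly converging nonlocal factor. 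One must therefore combine the strong $L^q_{\mathrm{loc}}$ convergence of $u_\eps^{m-1}$ (coming from strong $L^1_{\mathrm{loc}}$ convergence and the uniform $L^\infty$ bound) with a uniform $L^{q'}_{\mathrm{loc}}$ estimate for $\nabla^{\alpha-1}u_\eps^{m-1}$ in an appropriate dual space, so that the distributional product converges to $u\nabla^{\alpha-1}u^{m-1}$. Once this weak-strong pairing is secured, the formulation \eqref{weak} passes to the limit and $u$ is the desired weak solution.
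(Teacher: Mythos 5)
First, a point of order: the paper does not prove theorem~\ref{existence} at all; it is quoted from \cite[theorem~2.6]{BIK} and used as a black box, so the only meaningful comparison is with the construction in \cite{BIK}. Your outline does follow the same broad strategy as that reference: a viscous, kernel-regularized approximation, $L^p$ estimates obtained by testing with powers of $u_\eps$ together with a Stroock--Varopoulos inequality, an $L^1$--$L^\infty$ smoothing estimate of Nash--Moser type leading to \eqref{boundedness}, and a compactness argument to remove the regularization.

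However, as a proof your text stops exactly where the real work begins. The passage to the limit in the flux $u_\eps\nabla^{\alpha-1}u_\eps^{m-1}$ is acknowledged as ``the main obstacle'' but not carried out: you would need (i) a quantitative bound on $\partial_t u_\eps$ in a negative-order space, which presupposes precisely the uniform local integrability of the flux you are trying to establish; (ii) a uniform bound on $\nabla^{\alpha-1}u_\eps^{m-1}$ in a space dual to the one in which $u_\eps^{m-1}$ converges strongly, extracted from the dissipation quantities $\bigl\|(-\Delta)^{\alpha/4}u_\eps^{(p+m-1)/2}\bigr\|_{L^2}$; and (iii) a verification that the limit satisfies the precise weak formulation \eqref{weak}, including $|u|\nabla^{\alpha-1}(|u|^{m-2}u)\in L^1_{\operatorname{loc}}$, and conserves mass, which requires a tail estimate in order to test against cut-offs of the constant function. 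This is where the exponent bookkeeping behind \eqref{cond:m} is actually consumed in \cite{BIK}; your attribution of $m>1+(1-\alpha)/N$ to the Nash exponent is plausible, but attributing $m>3-2/\alpha$ to a degenerating Stroock--Varopoulos constant is asserted rather than derived and does not look right (those inequalities hold with uniform constants for the relevant $p$ and $m$; the second threshold enters in making the nonlinear nonlocal flux integrable and the compactness argument close, which only bites when $\alpha>1$). So the proposal is a reasonable road map that matches the strategy of the cited proof, but it does not yet constitute a proof of the statement, and the present paper deliberately cites \cite{BIK} rather than reproving it.
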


The admissible pairs of $(\alpha,m)$ in theorem~\ref{existence} are illustrated on the following drawing. 
\begin{center}
\includegraphics[width=200pt]{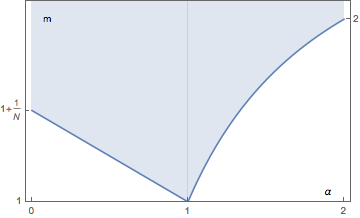}
\end{center}
However, in the rest of this paper, we restrict ourselves to the case $m\geq2$, even though $\alpha$ spans the whole range $(0,2)$.

\paragraph{Scaling invariance of the equation.}

The solutions of~\eqref{e:main} have the following scaling property.
\begin{lemma} \label{l:scaling}
If $u$ satisfies \eqref{e:main} then
\begin{equation}\label{scaling}
u_{A,B,C}(t,x) = Au(Bt,Cx)
\end{equation}
also satisfies \eqref{e:main}, provided that $B = A^{m-1} C^\alpha$.
\end{lemma}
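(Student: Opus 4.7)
The proof is a straightforward scaling computation, carried out at the level of the classical expressions and then transferred to the weak formulation by change of variables in the test function. I set $v(t,x) = A\,u(Bt,Cx)$ and track how each factor of the equation transforms.

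First, by the chain rule, $\partial_t v(t,x) = A B\,(\partial_t u)(Bt,Cx)$. Next, since $v^{m-1}(t,x) = A^{m-1} u^{m-1}(Bt,Cx)$, I examine the effect of a spatial dilation on $\nabla^{\alpha-1}$. Starting from the singular integral representation~\eqref{fractional} and performing the change of variables $z = Cy$ (so $\dy = C^{-N}\dz$), the differences $v(Cy)-v(Cx)$ pair with $|y-x|^{-N-\alpha} = C^{N+\alpha}|z-Cx|^{-N-\alpha}$, and the vector factor $y-x$ contributes $C^{-1}(z-Cx)$. Collecting powers of $C$, one obtains
\begin{equation*}
\nabla^{\alpha-1}\bigl[f(C\,\cdot)\bigr](x) = C^{\alpha-1}\,(\nabla^{\alpha-1} f)(Cx),
\end{equation*}
so that $\nabla^{\alpha-1} v^{m-1}(t,x) = A^{m-1} C^{\alpha-1}\,(\nabla^{\alpha-1} u^{m-1})(Bt,Cx)$.

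Multiplying by $v$ contributes an extra factor $A$, and then $\nabla\cdot$ contributes one more factor $C$ by the chain rule, giving
\begin{equation*}
\nabla\cdot\bigl(v\,\nabla^{\alpha-1} v^{m-1}\bigr)(t,x) = A^{m} C^{\alpha}\,\bigl[\nabla\cdot(u\,\nabla^{\alpha-1} u^{m-1})\bigr](Bt,Cx).
\end{equation*}
Matching this with $\partial_t v = A B\,(\partial_t u)(Bt,Cx)$ yields the scaling condition $AB = A^m C^{\alpha}$, i.e. $B = A^{m-1} C^{\alpha}$, as claimed.

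To make this argument rigorous for weak solutions in the sense of~\eqref{weak}, I would simply take a test function $\varphi(t,x)$ and plug in $\psi(s,y) = \varphi(s/B, y/C)$ into the weak formulation satisfied by $u$, then rescale with $t = s/B$, $x = y/C$; the Jacobians $B^{-1}C^{-N}$ and the derivatives of $\psi$ redistribute according to the same homogeneities computed above, and the relation $B = A^{m-1}C^{\alpha}$ makes all prefactors match. Since the entire argument reduces to tracking the homogeneities of $\partial_t$, $\nabla$, $\nabla^{\alpha-1}$ and the nonlinearity $G(u) = u^{m-1}$, I do not expect any genuine obstacle; the only point that deserves slight care is the derivation of the dilation identity for the nonlocal operator $\nabla^{\alpha-1}$, since this is what fixes the exponent $\alpha$ in the final relation.
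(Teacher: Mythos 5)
Your computation is correct, and since the paper states Lemma~\ref{l:scaling} without proof, your argument supplies the expected (and standard) homogeneity check: the dilation identity $\nabla^{\alpha-1}[f(C\,\cdot)](x) = C^{\alpha-1}(\nabla^{\alpha-1} f)(Cx)$ is correctly derived from~\eqref{fractional}, and matching $AB$ against $A^m C^\alpha$ gives exactly $B = A^{m-1}C^\alpha$. The remark about transferring the identity to the weak formulation~\eqref{weak} by rescaling test functions is the right way to make the statement rigorous and presents no obstacle.
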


\begin{remark}
In~\eqref{scaling}, both parameters $A$ and $C$ can take arbitrary values.
It is therefore possible to rescale the physical space independently
from a change of amplitude of the solution. This double-scaling property plays a key role
in the final argument of the proof of theorem~\ref{thm:main} (see \S\ref{mainproof}).
\end{remark}

\paragraph{A characterization of the Hölder continuity.}

To prove the Hölder regularity we will use the following lemma, which
is part of the folklore:
\begin{lemma} \label{l:holder} Let $u$ be a function defined in
  ${ (-1,0)}\times B_1$ such that for any
  $(t_0,x_0) \in (-1/2,0)\times B_{1/2}$ and any $r \in (0,1/2)$ we
  have
\[
\underset{(t_0-r,t_0)\times B_r(x_0)}{\operatorname{osc}u} \leq C r^\beta.
\]
Then $u$ is $\beta$-Hölder continuous in { $(-1/2,0)\times B_{1/2}$}.
\end{lemma}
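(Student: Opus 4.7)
Given two points $P_1 = (t_1, x_1)$ and $P_2 = (t_2, x_2)$ in $(-1/2, 0) \times B_{1/2}$, the strategy is to exhibit a single backward cylinder containing (in a suitable sense) both points, and to apply the hypothesis once. After swapping if necessary, I would assume $t_2 \leq t_1$ and set
\[\rho := \max\bigl(t_1 - t_2,\ |x_1 - x_2|\bigr).\]
The case $\rho \geq 1/4$ is trivial: chaining the hypothesis along a finite cover of $(-1/2,0)\times B_{1/2}$ by cylinders of radius $1/4$ shows $u$ has uniformly bounded oscillation $M$ on the whole region, so $|u(P_1)-u(P_2)| \leq M \leq (M\cdot 4^\beta)\,\rho^\beta$. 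I therefore focus on the regime $\rho<1/4$.

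For any $R \in (\rho, 1/2)$, I would consider
\[Q_R := (t_1 - R,\, t_1) \times B_R(x_1).\]
Since $t_2 \geq t_1 - \rho > t_1 - R$ and $|x_1 - x_2| \leq \rho < R$, the point $P_2$ lies in the interior of $Q_R$. Applying the hypothesis at $(t_0, x_0) = (t_1, x_1)$ with radius $R$ yields $\operatorname{osc}_{Q_R} u \leq C R^\beta$. Letting $R \downarrow \rho$ and using $\rho \leq |t_1 - t_2| + |x_1 - x_2|$, this gives $|u(P_1) - u(P_2)| \leq C'(|t_1-t_2|+|x_1-x_2|)^\beta$, which is exactly the claimed $\beta$-Hölder estimate.

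The one delicate point — and the main (though modest) obstacle of the proof — is that $P_1$ sits on the top-center corner of $Q_R$ rather than in its interior, so the oscillation does not \emph{a priori} involve $u(P_1)$. To bridge this gap, I would apply the hypothesis at $(t_1, x_1)$ itself with radii $r_k \downarrow 0$: the bound $\operatorname{osc}_{(t_1-r_k, t_1)\times B_{r_k}(x_1)} u \leq C r_k^\beta$ tends to $0$, which forces $u$ to admit a (unique) limit $L$ as one approaches $P_1$ from inside any such backward cylinder. After replacing $u$ by its now-continuous representative (so that $u(P_1) = L$), the value $u(P_1)$ lies in the closed range $[\inf_{Q_R} u,\, \sup_{Q_R} u]$; since $u(P_2)$ already lies there, $|u(P_1) - u(P_2)| \leq \operatorname{osc}_{Q_R} u$ and the argument closes. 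This is also precisely why the conclusion of the lemma is understood for the continuous representative of $u$.
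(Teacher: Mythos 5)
The paper states Lemma~\ref{l:holder} without proof, merely labeling it ``part of the folklore,'' so there is no in-paper argument to compare against; your proposal has to be judged on its own.

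Your overall strategy --- reduce to $\rho$ small, exhibit a single cylinder containing both points, and apply the hypothesis once --- is the right one, and the ancillary parts (finite-cover bound when $\rho\ge 1/4$, and $\rho\le|t_1-t_2|+|x_1-x_2|$) are fine. The genuine gap is in how you handle the fact that $P_1$ (and, when $t_1=t_2$, also $P_2$, a case you leave untreated) sits on the closed \emph{top} boundary of the open cylinder $Q_R=(t_1-R,t_1)\times B_R(x_1)$ rather than in its interior. Your remedy --- observe that $u$ has a limit $L$ along the shrinking backward cylinders and then ``replace $u$ by its continuous representative so that $u(P_1)=L$'' --- does not close the argument: here $u$ is a pointwise-defined function, not an $L^p$-equivalence class, so there is no representative to swap in, and the Hölder conclusion is about $u$ itself, not a modification of it. What is needed is to \emph{prove} that $u(P_1)=L$ is already forced by the hypothesis. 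Pick $\eta>0$ small with $t_1+\eta<0$, and apply the hypothesis at the shifted center $(t_1+\eta,x_1)$ with radius $r\in(\eta,1/2)$: the resulting cylinder contains $P_1$ in its interior and also contains, for every $r'<r-\eta$, the backward cylinder $(t_1-r',t_1)\times B_{r'}(x_1)$; hence $|u(P_1)-L|\le Cr^\beta$, and sending $r\downarrow\eta$ then $\eta\downarrow 0$ gives $u(P_1)=L$. Cleanest of all is to center $Q_R$ at $(t_1+\eta,x_1)$ from the outset (with $R>\max(\rho,\eta)$), so both $P_1$ and $P_2$ lie in the open cylinder, and let $\eta\to0$ at the end; this dispenses with the limit argument entirely and simultaneously resolves the $t_1=t_2$ subcase.
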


\paragraph{Sobolev embedding.}

The following local Sobolev embedding theorem will be useful:
\[
H^\frac{\alpha}{2}(B_r) \subset L^p(B_r)
\]
for $p={ \frac{2N}{N-\alpha}} > 2$ and any $r >0$. More precisely,
there is a constant $C$, independent of $r$, such that:
\begin{equation}\label{sobolev}
\left(\int_{B_r} u^p\dx\right)^\frac{2}{p} \leq C \iint_{ B_r \times
  B_r} \left(u(y)-u(x)\right)^2 \frac{\dx\dy}{|y-x|^{N+\alpha}}.
\end{equation}

\section{Energy estimates}

In this section, we derive the necessary energy estimates to follow De
Giorgi's original path towards the H\"older continuity of the solutions.
As we will ultimately use lemma~\ref{l:holder} on a dyadic rescaled sequence of solutions,
we cannot take for granted the value of the $L^\infty$ bound of the weak solution.
Instead, we have to prove the energy estimates for weak solutions that are potentially
allowed to grow as a mild power-law at infinity.

\begin{definition}
For any $\epsilon>0$, let us define
\begin{equation}\label{defpsiepsilon}
\Psi_\epsilon(x) = (|x|^\epsilon-2)_+.
\end{equation}
\end{definition}

\begin{theorem}[Energy estimates] \label{thm:EE} Let us assume that
  $\alpha\in (0,2)$ and that $m \ge 2$. Then there are absolute constants $\eps_0>$
  and $C>0$ (depending only on $N,\alpha,m$) such that, for any
 weak solution $u$ of~\eqref{e:main} in $(-2,0] \times \RN$
  satisfying   for some $\eps \in (0,\eps_0)$:
\begin{equation}\label{psiepsilon}
  \forall t\in(-2,0],\quad \forall x\in\RN, \qquad 0\leq u (t,x) \leq 1 + \Psi_\epsilon(x)
\end{equation}
and for any
  smooth truncation functions $\varphi_\pm:\RN \to [0,+\infty)$ such that:
\begin{itemize}
\item $1/4<\varphi_+ \leq 1+ \Psi_\eps$ on $\RN$ with
 $\varphi_+ = 1+ \Psi_\eps$ outside $B_{2^{1/\eps}}$ and
\[
|\nabla \fip / \fip| 
+ |\nabla \fip| + |\nabla \fip / \fip|^2 + |\nabla \fip|^2 \le C_{\fip},\]
\item  $0<\varphi_-\leq 1$ on $B_2$ but $\varphi_- \equiv 0$ outside $B_2$  with
\[|\nabla \fim /\fim | \le C_{\fim} \fim^{-1/m_0}\]
on $\bar{B}_2$ for some $m_0 \ge 2$,
\end{itemize}
the two following energy estimates hold true for any  $-2<t_1<t_2<0$:
\begin{align}\label{e:EE}
\nonumber \frac12\int_{\RN} & \left(u(t_2,x)-\fipm (x) \right)_\pm^2\, \fipm^{-1} (x) \dx  \\
\nonumber  &
+ \frac14 \int_{t_1}^{t_2} \iint \bigg((u(t,y) - \fipm(y))_\pm -(u(t,x) - \fipm(x))_\pm\bigg)^2 
\DG (u(t,x),u(t,y)) \frac{\dx \dy }{|y-x|^{N+\alpha}} \dt   \\ 
  &+ \frac14 \int_{t_1}^{t_2} \iint (u(t,x) - \fipm(x))_+ (u(t,y) - \fipm(y))_- 
\DG (u(t,x),u(t,y)) \frac{\dx \dy }{|y-x|^{N+\alpha}}  \dt  \\
\nonumber & \leq \int_{\RN}  \left(u(t_1,x)-\fipm(x)\right)_\pm^2 \fipm^{-1}(x) \dx  
+C C_{\fipm} \,  \left|\{(u - \fipm)_\pm > 0 \}\cap (t_1,t_2)\times\RN \right|,
\end{align}
where $\DG$ is defined for $a,b \in \R$ by \( \DG (a,b) = \frac{G(a)-G(b)}{a-b}\).
When $\alpha\in(0,1]$, the estimates \eqref{e:EE} hold for a drifted solution $\bar{u}$ 
defined by~\eqref{uDrift1}-\eqref{uDrift2}.
\end{theorem}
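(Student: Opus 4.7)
We test the weak formulation \eqref{weak} against a function $\phi_\pm(t,x)$ proportional to $\pm(u - \fipm)_\pm/\fipm$, after the standard regularization (smoothing of the positive/negative part and a temporal cutoff between $t_1$ and $t_2$). Assumption~\eqref{psiepsilon} together with the fact that $\fip = 1 + \Psi_\eps$ outside $B_{2^{1/\eps}}$ and that $\fim \equiv 0$ outside $B_2$ guarantees that $(u - \fipm)_\pm$ is compactly supported in $x$, so all nonlocal integrals converge. Since $\fipm$ does not depend on $t$, integration in time yields the squared $L^2$ terms $\frac{1}{2}\int_{\RN}(u(t_i)-\fipm)_\pm^2\,\fipm^{-1}\,\dx$ appearing on both sides of \eqref{e:EE}.

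For the spatial term, one expands $\nabla^{\alpha-1}$ via the singular integral representation \eqref{fractional} and symmetrizes the resulting double integral under $(x,y) \leftrightarrow (y,x)$. Using the factorization $G(u(y))-G(u(x)) = (u(y)-u(x))\DG(u(x),u(y))$ together with the elementary pointwise identity (stated here in the $+$ case; the $-$ case is analogous after a sign flip)
\begin{align*}
(u(y)-u(x))\bigl((u-\fip)_+(y) - (u-\fip)_+(x)\bigr) & = \bigl((u-\fip)_+(y) - (u-\fip)_+(x)\bigr)^2 \\
& \quad + (u-\fip)_+(x)(u-\fip)_-(y) + (u-\fip)_-(x)(u-\fip)_+(y) \\
& \quad + (\fip(y)-\fip(x))\bigl((u-\fip)_+(y) - (u-\fip)_+(x)\bigr),
\end{align*}
one extracts, after a further symmetrization in $(x,y)$, precisely the quadratic jump dissipation and the mixed $\pm$ cross term on the LHS of \eqref{e:EE}, plus a residual involving $\fipm(y)-\fipm(x)$.

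This residual, together with the additional terms produced when the gradient falls on the $\fipm^{-1}$ factor in $\phi_\pm$, forms an error supported on $\{(u-\fipm)_\pm > 0\}$ and controlled pointwise by $|\nabla\fipm/\fipm|$. Using the hypotheses $|\nabla\fip/\fip| + |\nabla\fip| + |\nabla\fip/\fip|^2 + |\nabla\fip|^2 \leq C_{\fip}$ and $|\nabla\fim/\fim| \leq C_{\fim}\fim^{-1/m_0}$, a Young-type inequality absorbs a small fraction of the positive jump dissipation (explaining the factor $1/4$ rather than $1/2$), and the rest is bounded by $CC_{\fipm}|\{(u-\fipm)_\pm > 0\}\cap(t_1,t_2)\times\RN|$. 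For $\alpha \in (0,1]$, the operator $\nabla^{\alpha-1}$ has non-positive order and the symmetrization above generates a singular convective contribution which is not integrable; these singular terms are cancelled by working instead with the drifted quantity $\bar{u}$ of \eqref{uDrift1}--\eqref{uDrift2}, in the spirit of \cite{CSV}.

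The \textbf{main obstacle} is the careful bookkeeping of the many $\nabla\fipm/\fipm$ contributions so that, despite the $\fipm^{-1}$ weighting present in the test function, the jump dissipation on the LHS of \eqref{e:EE} emerges without any spurious $\fipm^{-1}$ factor. In the regime $\alpha \leq 1$, identifying the correct drift (which itself involves an integral of $\nabla^{\alpha-1} G(u)$) and checking that the energy identity survives the change of frame is equally delicate; this is why the proof splits naturally into the cases $\alpha\in(1,2)$ and $\alpha\in(0,1]$.
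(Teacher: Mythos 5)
The single point you flag as the ``main obstacle'' is not a matter of bookkeeping: it is a genuine conceptual gap, and it is precisely the one the paper's proof is built to avoid. After testing the weak formulation against $\varphi=\pm(u-\fipm)_\pm/\fipm$, the spatial term becomes $\int u\,\nabla^{\alpha-1}G(u)\cdot\nabla\varphi$, and the $u$ carried by the equation does not cancel the $\fipm^{-1}$ carried by the test function. Writing $\nabla\varphi = \fipm^{-1}\nabla(u-\fipm)_\pm - (u-\fipm)_\pm\fipm^{-2}\nabla\fipm$, the leading contribution is $\int (u/\fipm)\,\nabla^{\alpha-1}G(u)\cdot\nabla(u-\fipm)_\pm$, with an asymmetric weight $u(x)/\fipm(x)$ that does not factor through the $(x,y)$-symmetrization; the bilinear form $\BG((u-\fipm)_\pm,(u-\fipm)_\pm)$ that \eqref{e:EE} requires therefore does not emerge cleanly. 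Expanding $u/\fipm = 1 + (u-\fipm)_\pm/\fipm$ on the support simply trades this for a new singular integral $\int (u-\fipm)_\pm\fipm^{-1}\nabla^{\alpha-1}G(u)\cdot\nabla(u-\fipm)_\pm$ whose control is not addressed and is not of the same type as the $\QG$ error terms.

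The paper's resolution is not to test against $\pm(u-\fipm)_\pm/\fipm$ but against $H'\bigl(1\pm(u-\fipm)_\pm/\fipm\bigr)$, where $H(r)=r\ln r - r + 1$ is the convex function with $H''(r)=1/r$, $H(1)=H'(1)=0$. This entropy-type functional $\Epm$ of \eqref{e:alternate} is comparable to the $L^2$ energy because $\tfrac14 r^2\le H(1+r)\le r^2$ on the relevant range $r\in[-1,4]$, and --- this is the point --- on $\{\pm(u-\fipm)>0\}$ one has $H''\bigl(1\pm(u-\fipm)_\pm/\fipm\bigr)\,u = \fipm$, so the $u$ from the equation exactly cancels the $\fipm^{-1}$ weight: the effective test gradient becomes $\fipm\nabla\bigl((u-\fipm)_\pm/\fipm\bigr) = \nabla(u-\fipm)_\pm - (u-\fipm)_\pm\nabla\fipm/\fipm$, which splits into $-\BG(\pm(u-\fipm)_\pm,u)+\QG(\pm(u-\fipm)_\pm,u)$ with no spurious weight (see \eqref{e:E'interm}--\eqref{e:E'start}). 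Once this structure is in place, your description of the downstream steps (splitting $u$ to separate the coercive and ``good extra'' terms, absorbing part of the error into the dissipation, the drift correction for $\alpha\le1$) is broadly the right shape; but you should also note that the control of $\QG$ is not a single pointwise Young inequality --- it is the content of the inner/outer decomposition and lemmas~\ref{l:ee1}--\ref{l:ee3-}, where the $\varphi_-$ case additionally needs the $\fim^{-1/m_0}$ structure rather than plain boundedness of $\nabla\fim/\fim$.
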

\begin{remark}
The $\pm$ notation means that the inequality~\eqref{e:EE} stands true if all the
symbols $\pm$ are either simultaneously replaced by $+$ or by $-$.
Hybrid choices are not allowed.
\end{remark}
\begin{remark}
The functions $\varphi_\pm$ serve a truncation purpose, which should become clear
as the proof unfolds. For example, $(u-\varphi_+)_+\equiv0$ outside $B_{2^{1/\eps}}$
and similarly $(u-\varphi_-)_-\equiv0$ outside $B_2$, which in particular
takes the ambiguity out of the first integral as $\varphi_-^{-1}$ does not have to be computed
ouside $B_2$.
\end{remark}
\begin{remark}
Obviously, each term of \eqref{e:EE} is non-negative.
 The third term in \eqref{e:EE} that mixes a positive and a negative part
 is called the ``good extra term'' in \cite{CCV}.  It will play a
  crucial role in the proof of the lemma on intermediate values (see
  Section 5).  By themselves, the other non-negative terms of \eqref{e:EE} would be
  sufficient to prove the first lemmas of De Giorgi (see Section 4).
\end{remark}
\begin{remark}
In order to prove the energy estimates we will introduce an alternate energy functional:
\begin{equation} \label{e:alternate}
\Epm (t) = \int_{\RN} H \left(1 \pm
  \frac{(u-\varphi_\pm)_\pm(x)}{\varphi(x)_\pm}\right) \varphi_\pm (x) \dx 
  \end{equation}
where $H$ is an appropriate convex function. The functional $\mathcal{E}_+$ is
well-defined since $\varphi_+$ does not vanish.
Note also that $\frac{(u-\varphi_+)_+}{\varphi_+} \in [0,(\inf_{B_{2^{1/\epsilon}}} \varphi_+)^{-1}]\subset[0,4]$.
As far as $\mathcal{E}_-$ is concerned,
we remark that 
\( 1 - \frac{(u-\varphi_-)_-}{\varphi_-} = 1 \wedge \frac{u}{\varphi_-}\in[0,1] \).
In particular, the spurious fraction simply boils down to $H(1)$ when $x\not\in B_2$.
Moreover, only the values of $H(1+r)$ for $r\in[-1,4]$ are relevant for~\eqref{e:alternate}.
\end{remark}

\bigskip
The proof of theorem~\ref{thm:EE} is structured as follows.
First  we explain why it is enough to consider the alternate energy
functional~\eqref{e:alternate}.
Then we estimate the error terms for the energy $\int (u -\fip)_+^2 \fip^{-1} $. 
Next, we deal with the case of $\int (u-\fim)_-^2 \fim^{-1}$.
Finally, we explain the modifications that are necessary to deal with the case $\alpha\in(0,1]$.

\subsection{An alternate energy functional}

We consider the convex function $H: [0,+\infty) \to [0,+\infty)$ such that
\[ H'' (r) = r^{-1} \quad \text{and} \quad H (1)=H'(1) = 0.\]
The function $H$ is given by the formula $H(r) = r \ln r -r +1$.  
Following~\cite{CSV}, we consider the energy functional~\eqref{e:alternate}.
As $ \frac14 r^2 \le H (1+r) \le r^2$ for $r \in [-1,4]$, the proof of
theorem~\ref{thm:EE} is reduced to proving that
\begin{multline} \label{eq:goal-ee}
  \Epm (t_2) + \int_{t_1}^{t_2} \left( \BG((u(t) -\fipm)_\pm,(u(t) -\fipm)_\pm) - \BG((u(t) -\fipm)_+,(u(t) -\fipm)_-)\right) \dt \\
  \le \Epm (t_1) +C \left|\{(u - \fipm)_\pm > 0 \}\cap    (t_1,t_2)\times\RN \right|
\end{multline}
where the bilinear form $\BG$ is defined as follows:
\begin{equation} \BG (v,w) = \iint (v(y)-v(x))(w(y)-w(x)) \DG (u(x),u(y)) \frac{c_{\alpha,N}^0 \dx \dy }{|y-x|^{N+\alpha}}\cdotp\end{equation}
Let us recall that \(\DG (a,b) = \frac{G(a)-G(b)}{a-b}.\)

\bigskip
Le us compute first the time derivative of the alternate energy functional:
\label{IPP}
\begin{align*} 
\frac{d}{dt} \Epm (t) &= \int_{(u-\fipm)_\pm>0}  H' \left(1 \pm \frac{(u-\fipm)_\pm}{\varphi_\pm} \right)  \partial_t u \dx\\
&=\int  H' \left(1 \pm \frac{(u-\fipm)_\pm}{\varphi_\pm} \right)  \partial_t u \dx\\
& = - \int H'' \left(1 \pm \frac{(u-\fipm)_\pm}{\varphi_\pm} \right) u \nabla^{\alpha-1} G(u) \cdot \nabla \left(\frac{\pm(u-\fipm)_\pm}{\fipm} \right). 
\end{align*}
This formal computation can be made rigorous thanks to the regularity of some approximate
solutions as it was done in \cite{BIK}. 
We now remark that, on the set $\{\pm(u-\fipm)>0\}$, we have the following remarkable identity:
\[ H'' \left(1 \pm \frac{(u-\fipm)_\pm}{\varphi_\pm} \right) u = \fipm .\]
This implies that 
\begin{align}
\nonumber 
\frac{d}{dt} \Epm (t) = &- \int \nabla^{\alpha-1} G(u) \cdot \nabla (\pm (u-\fipm)_\pm) \dx  
+ \int (\pm (u-\fipm)_\pm) \nabla^{\alpha-1} G(u) \cdot \frac{\nabla \fipm}{\fipm} \dx \\
= & - \BG (\pm (u-\fipm)_\pm, u ) + \QG (\pm (u-\fipm)_\pm, u)
\label{e:E'interm}
\end{align}
with 
\begin{equation}
\QG (v,w) = \iint v(x) (w(y)-w(x)) \DG (u(x),u(y)) \frac{\nabla \fipm (x)}{\fipm(x)} \cdot (y-x) \frac{c_{\alpha,N} \dx \dy }{ |y-x|^{N+\alpha}}\cdotp
\end{equation}

\bigskip
Up to now, the second variable $w=u$ of $\BG(\cdot,u)$ or $\QG(\cdot,u)$
could have been simplified with the denominator of the kernel $\DG (u(x),u(y))$. We are now
going to split that second variable.
More precisely, using the fact that $u = \pm (u- \fipm)_\pm \mp (u-\fipm)_\mp + \fipm$, we get:
\[ \BG (\pm (u-\fipm)_\pm, u ) = \BG ( (u-\fipm)_\pm, (u-\fipm)_\pm ) - \BG ( (u-\fipm)_+, (u-\fipm)_-) + \BG (\pm (u-\fipm)_\pm,\fipm).\]
Combining this with \eqref{e:E'interm} yields
\begin{align} \label{e:E'start}
\frac{d}{dt} \Epm (t) + \BG ( (u-\fipm)_\pm, (u-\fipm)_\pm ) &- \BG ( (u-\fipm)_+, (u-\fipm)_-) \\
&= - \BG (\pm (u-\fipm)_\pm, \fipm ) + \QG (\pm (u-\fipm)_\pm, u). \notag
\end{align}
We remark that the two terms $\BG ( (u-\fipm)_\pm, (u-\fipm)_\pm )$
and $- \BG ( (u-\fipm)_+, (u-\fipm)_-)$ are non-negative. Following \cite{CCV,CSV}, the first
one is referred to as the ``coercive term'' while the second one is
referred to as the ``good extra term''. The rest
of the proof of theorem~\ref{thm:EE} consists in controlling the terms in the right hand side
of \eqref{e:E'start} by those two non-negative terms plus
$C |\{ (u-\fipm)_\pm >0 \} \cap (t_1,t_2) \times \R^N|$.

\begin{proof}[Proof of theorem~\ref{thm:EE}]
  First, let us consider the case $\alpha\in(1,2)$.
  As far as $\mathcal{E}_+$ is concerned, one has to combine~\eqref{e:E'start}
  with the subsequent lemmas~\ref{l:ee1}, \ref{l:ee2+}, \ref{l:ee3+}, \ref{l:ee4+},
  \ref{l:ee5+}, \ref{l:ee6+} and \ref{l:ee7+}.  As far as
  $\mathcal{E}_-$ is concerned, one has to combine \eqref{e:E'start} with the subsequent
  lemmas~\ref{l:ee1}, \ref{l:ee2-} and \ref{l:ee3-}. 
  Given the range of admissible parameters $(\alpha,m)$ in theorem~\ref{thm:EE},
  the critical value for $\epsilon_0$  when $\alpha\in(1,2)$ is \[\epsilon_0=(\alpha-1)/(m-1).\]
  When $\alpha\in(0,1]$, the energy estimate~\eqref{e:E'start} is replaced by~\eqref{e:E'startNEW}.
  For the energy estimate on $\mathcal{E}_+$, lemmas \ref{l:ee5+}, \ref{l:ee6+}, \ref{l:ee7+} are replaced by lemma \ref{l:ee567+}.
  For the energy estimate on $\mathcal{E}_-$, lemmas~\ref{l:ee2-}, \ref{l:ee3-} are updated by lemma \ref{l:ee23-}.
 In that case, the critical value for $\epsilon_0$ is
  \[
  \epsilon_0 = \alpha/m.
  \]
  The proof will be complete once the lemmas are established.
\end{proof}

\bigskip
In what follows it will be convenient to write
\[ \ufi = (u-\fipm)_\pm.\]
An inequality $A\leq cB$ that involves a universal constant $c$ depending on $N,\alpha,m$ and $C_{\fipm}$ will be denoted by~$A\lesssim B$.

\medskip
 We will repeatedly use the fact that \eqref{psiepsilon} implies
\begin{equation}\label{e:DG}
 \DG (u(x),u(y)) \leq \sup_{z\in[u(x),u(y)]} |G'(z)|
  \lesssim (1 \vee |x| \vee |y|)^{\eps(m-2)}.
\end{equation}
Here in~\eqref{e:DG} we critically used the fact that $m \ge 2$
and $1 + \Psi_\eps (x) \le (1 \vee |x|)^\eps$.  Another crucial observation
that follows from $m \ge 2$ is that $G$ is convex; one has therefore
\begin{equation}\label{e:DGconvex}
\DG (a,b) \geq \DG(a',b) \geq \DG(a',b')
\end{equation}
as soon as $0\leq a'\leq a$ and $0< b'\leq b$.
\begin{lemma}
For $m\geq2$, if at least one of the values $u(x)$ or $u(y)$ is larger than $c>0$ then
\begin{equation}\label{e:DG2}
\DG (u(t,x),u(t,y)) \ge c^{m-2}.
\end{equation}
\end{lemma}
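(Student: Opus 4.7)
The plan is to reduce the bound to an elementary scalar inequality based on the homogeneity of $G(u)=u^{m-1}$. Without loss of generality I may assume $a := u(t,x) \geq u(t,y) =: b$, so that the hypothesis reads $a \geq c > 0$ and $b \in [0,a]$. Setting $r := b/a \in [0,1]$, the quantity $\DG(a,b)$ factors as
\[
\DG(a,b) \;=\; \frac{a^{m-1}-b^{m-1}}{a-b} \;=\; a^{m-2}\cdot\frac{1-r^{m-1}}{1-r}\qquad (r\in[0,1)),
\]
while the limiting value at $r = 1$ is $G'(a) = (m-1)a^{m-2}$. So the whole task is reduced to bounding the dimensionless factor $(1-r^{m-1})/(1-r)$ from below by $1$.

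The remaining step is the one-line inequality $(1-r^{m-1})/(1-r) \geq 1$ for $r \in [0,1)$, which follows at once from $r^{m-1}\leq r$, itself valid since $m-1\geq 1$ and $r\in[0,1]$. Combined with $a\geq c$ and $m-2\geq 0$, this yields $\DG(a,b)\geq a^{m-2}\geq c^{m-2}$. The borderline case $a=b$ is handled directly via $\DG(a,a)=(m-1)a^{m-2}\geq c^{m-2}$. I do not anticipate any real obstacle here; in fact the same conclusion can be read off from~\eqref{e:DGconvex} by first decreasing the first argument down to $c$ (which is allowed since $c\leq a$) and then letting the second argument $b'$ tend to $0^+$, yielding $\DG(a,b)\geq\DG(c,b)\geq\lim_{b'\to 0^+}\DG(c,b')=c^{m-2}$. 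This shows that the present lemma is essentially the quantitative counterpart of the monotonicity property~\eqref{e:DGconvex} already recorded.
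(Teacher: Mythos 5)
Your proof is correct, and it takes a cleaner route than the one in the paper. The paper splits into two cases: when both $u(x),u(y)\geq c$ it invokes the mean value theorem to write $\DG(u(x),u(y))=G'(z)=(m-1)z^{m-2}\geq(m-1)c^{m-2}\geq c^{m-2}$, and when one value drops below $c$ it falls back on the convexity inequality~\eqref{e:DGconvex} to force $\DG(u(x),u(y))\geq\DG(0,c)=c^{m-2}$. You instead exploit the homogeneity of $G(u)=u^{m-1}$ to factor $\DG(a,b)=a^{m-2}\,\tfrac{1-r^{m-1}}{1-r}$ with $r=b/a\in[0,1]$ and then reduce the whole lemma to the one-line bound $\tfrac{1-r^{m-1}}{1-r}\geq1$, which is $r^{m-1}\leq r$. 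This is a single unified computation with no case distinction, and it replaces convexity of $G$ by the more specific homogeneity of the power law; the paper's argument, being built on convexity alone, would generalize more readily to non-homogeneous $G$, while yours is shorter here. Your closing remark deriving the same bound from~\eqref{e:DGconvex} by pushing $a\downarrow c$ and $b'\downarrow 0^+$ essentially reconstructs the paper's second case, confirming the two routes agree. One small point of care: the factorization requires $a>0$, which your WLOG normalization $a=\max\{u(x),u(y)\}\geq c>0$ supplies, and the final step $a^{m-2}\geq c^{m-2}$ uses $m\geq 2$ so that $t\mapsto t^{m-2}$ is non-decreasing; both are in place.
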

\proof If $u(x)\geq c$ and $u(y)\geq c$ then $\DG(u(x),u(y)) = G'(z)$ for some $z\in[u(x),u(y)]$
with an increasing function $G'(z)=(m-1)z^{m-2}$. One has therefore 
\[\DG(u(x),u(y))\geq (m-1) c^{m-2}\geq c^{m-2}\]
in this case. On the other hand, if $u(x)\geq c\geq u(y)\geq0$ then by the convexity of $G$,
the inequality~\eqref{e:DGconvex} implies
\[
\DG(u(x),u(y))\geq \DG(0,c)=c^{m-2}.
\]
The case $u(y)\geq c\geq u(x)\geq0$ is similar and the lemma follows.
\qed

\subsection{Common estimate for $\mathcal{E}_+$ and $\mathcal{E}_-$ and any $\alpha\in(0,2)$}

Controlling $\QG$ will require a different approach for $\mathcal{E}_+$ and for $\mathcal{E}_-$. Dealing with the first term on the right-hand side of  \eqref{e:E'start} is much easier. 
\begin{lemma}\label{l:ee1}
For $\eps < \frac\alpha{m}$, we have:  
\[ - \BG (\pm (u-\fipm)_\pm, \fipm ) \le  \frac12 \BG (\ufi,\ufi) + C_{\fipm} |\{(u-\fipm)_\pm >0\}|\]
where $C_{\fipm} \gtrsim 1+ \| \nabla \fipm\|_\infty^2 $. 
\end{lemma}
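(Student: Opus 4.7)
The plan is to estimate the quadratic form $\BG(\pm\ufi,\fipm)$ by applying the Cauchy--Schwarz/Young inequality pointwise under the double integral, extracting $\tfrac12\BG(\ufi,\ufi)$ on one side, and showing that the remaining term is bounded by $C_{\fipm}\,|A_\pm|$, where $A_\pm=\{(u-\fipm)_\pm>0\}$. The key observation that lets the measure $|A_\pm|$ appear is that the increment $\ufi(y)-\ufi(x)$ vanishes as soon as both $x$ and $y$ lie in $A_\pm^c$, so by the symmetry of the kernel $\DG(u(x),u(y))/|y-x|^{N+\alpha}$ we may restrict the remaining integral to $\{x\in A_\pm\}$, up to a harmless factor $2$.

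The crucial point is that $A_\pm$ is bounded. Indeed, $\fip=1+\Psi_\eps\ge u$ outside $B_{2^{1/\eps}}$, so $A_+\subset B_{2^{1/\eps}}$, and $\fim\equiv0$ outside $B_2$ together with $u\ge0$ forces $A_-\subset B_2$. Hence it suffices to estimate
\[
I:=\iint_{x\in A_\pm}(\fipm(y)-\fipm(x))^2\,\DG(u(x),u(y))\,\frac{\dx\,\dy}{|y-x|^{N+\alpha}}
\]
by $C_{\fipm}|A_\pm|$. I would split the $y$-integral into the near field $|y-x|\le1$ and the far field $|y-x|>1$.

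In the near field, $|\fipm(y)-\fipm(x)|^2\le\|\nabla\fipm\|_\infty^2|y-x|^2\le C_{\fipm}|y-x|^2$, and since $x\in A_\pm$ is uniformly bounded and so is $|y|\le|x|+1$, the bound \eqref{e:DG} gives $\DG\lesssim1$. The resulting integrand is $\lesssim C_{\fipm}|y-x|^{2-N-\alpha}$, which integrates in $y$ thanks to $\alpha<2$, producing a contribution $\lesssim C_{\fipm}|A_\pm|$. In the far field $|y-x|>1$, I use the rough bound $(\fipm(y)-\fipm(x))^2\lesssim 1+(1\vee|y|)^{2\eps}$ coming from $\fipm\le 1+\Psi_\eps$ together with $|x|\lesssim1$, and combine it with $\DG\lesssim(1\vee|x|\vee|y|)^{\eps(m-2)}\lesssim 1+(1\vee|y|)^{\eps(m-2)}$ from \eqref{e:DG}. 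Since $|y|\lesssim 1+|y-x|$ on this region, the integrand is
\[
\lesssim\frac{(1+|y-x|)^{\eps m}}{|y-x|^{N+\alpha}},
\]
which is integrable at infinity precisely under the hypothesis $\eps m<\alpha$, giving again a contribution $\lesssim|A_\pm|$ (with a constant absorbed into $C_{\fipm}$).

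The only real obstacle is the unbounded growth of the truncation $\fipm$ at infinity, which could destroy integrability of $I$; this is exactly where the threshold $\eps_0=\alpha/m$ appears, as it is the sharp condition ensuring that the factor $(1+|y-x|)^{\eps m}$ produced by combining the growth of $\fipm$ with $\DG$ is dominated by the decay $|y-x|^{-N-\alpha}$ of the kernel. Assembling the near- and far-field estimates and inserting into the Young inequality completes the proof.
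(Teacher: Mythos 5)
Your proof is correct and follows essentially the same route as the paper: apply Young's inequality to extract $\frac12\BG(\ufi,\ufi)$, use the support of $(u-\fipm)_\pm$ (plus symmetry of the kernel) to confine $x$ to a bounded set, split the $y$-integral into near and far fields, and control the far field by combining the growth of $\fipm$ with \eqref{e:DG} under the condition $\eps m<\alpha$. The only cosmetic difference is that you absorb the growth into $(1+|y-x|)^{\eps m}$ directly, whereas the paper first writes a uniform bound $\|\fipm\|_{L^\infty(B_{2^{1/\eps}})}^2|y-x|^{\eps m}$; the power counting and the resulting threshold $\eps_0=\alpha/m$ are identical.
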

\begin{proof}
Keeping track of the support of $u_\varphi^\pm$ we write
\begin{align*}
  - \BG (\pm(u-\fipm)_\pm, \fipm ) = &  \mp \iint (\ufi (y) -\ufi (x)) (\fipm (y) -\fipm (x)) \DG (u(x),u(y)) \frac{c_{\alpha,N}^0 \dx \dy}{|y-x|^{N+\alpha}} 
\\
\le &  \frac12 \BG (\ufi,\ufi) \\
& + \frac12 \iint (\fipm (y) -\fipm (x))^2 (\un_{\ufi(x)>0} + \un_{\ufi(y)>0}) \DG (u(x),u(y)) \frac{c_{\alpha,N}^0 \dx \dy}{|y-x|^{N+\alpha}}\cdotp
\end{align*}
Thanks to~\eqref{e:DG} we estimate the second term of the right hand side as follows:
\begin{align*}
 \iint & (\fipm (y) -\fipm (x))^2 (\un_{\ufi(x)>0} + \un_{\ufi(y)>0}) \DG (u(x),u(y)) \frac{c_{\alpha,N}^0 \dx \dy}{|y-x|^{N+\alpha}} \\
& \lesssim \int_{\ufi>0} \left\{ \int (\fipm (y) -\fipm (x))^2  (1\vee |x| \vee |y|)^{\eps (m-2)} \frac{\dy}{|y-x|^{N+\alpha}} \right\} \dx. 
\end{align*}
Since $\{\ufi>0\}$ is contained in $B_{2^{\frac1\eps}}$, we have
\begin{align*}
\int (\fipm (y) -\fipm (x))^2  &(1\vee |x| \vee |y|)^{\eps (m-2)} \frac{\dy}{|y-x|^{N+\alpha}} \\
& \lesssim \|\nabla\fipm\|_{L^\infty}^2 \int_{|y-x|\le 1}  \frac{\dy}{|y-x|^{N+\alpha-2}} 
+ \|\fipm\|_{L^\infty(B_{2^{1/\eps}})}^2 \int_{|y-x|\ge 1}  |y-x|^{\eps m} \frac{\dy}{|y-x|^{N+\alpha}} \\
& \lesssim 1,
\end{align*}
provided $\eps < \alpha/m$.  This yields the desired estimate.
\end{proof}

\subsection{Estimates for $\mathcal{E}_+$ for $\alpha>1$}

We first estimate $\QG ( (u-\fip)_+, u)$. In order to do so, we  split it as follows (see \cite{CSV}):
 \[\QG ( (u-\fip)_+, u) = \Qint^{+,+}+ \Qint^{+,-}+ \Qint^{+,0}+  \Qout^{+,+}+ \Qout^{+,-}+ \Qout^{+,0}\]
 with 
 \[\begin{cases}
 \Qint^{+,+} &= \QGint ((u-\fip)_+, (u-\fip)_+) \\
 \Qint^{+,-} &= \QGint ((u-\fip)_+, -(u-\fip)_-)\\
\Qint^{+,0} &=  \QGint ( (u-\fip)_+, \fip)\\
  \Qout^{+,+}&= \QGout ( (u-\fip)_+, (u-\fip)_+) \\ 
\Qout^{+,-} &= \QGout ( (u-\fip)_+, -(u-\fip)_-)\\
 \Qout^{+,0} &= \QGout ( (u-\fip)_+, \fip).
 \end{cases}
 \]
where $\QGint$ and $\QGout$ are defined by
\begin{align*}
\QGint (v,w) = \iint_{|x-y| \le \eta} 
 v(x) (w(y)-w(x)) \DG (u(x),u(y)) \frac{\nabla \fip (x)}{\fip(x)} \cdot (y-x) \frac{c_{\alpha,N} \dx \dy }{ |y-x|^{N+\alpha}} , \\
\QGout (v,w) = \iint_{|x-y| \ge \eta} 
 v(x) (w(y)-w(x)) \DG (u(x),u(y)) \frac{\nabla \fip (x)}{\fip(x)} \cdot (y-x) \frac{c_{\alpha,N} \dx \dy }{ |y-x|^{N+\alpha}} 
\end{align*} 
for some small parameter $\eta \in (0,1)$ to be fixed later (see lemma~\ref{l:ee3+} below).  We
estimate successively the six terms appearing in this decomposition.
Note that we only need upper estimates as negative terms can be discarded from the right-hand side of \eqref{e:E'start}.

\begin{lemma}\label{l:ee2+}
For $\alpha<2\leq m$, one has
 \[
  \Qint^{+,+}  \le \frac14 \BG(\ufip,\ufip) + C_{\fip} |\{(u-\fip)>0\}| 
 \]
where $C_{\fip} \gtrsim \|\nabla \fip/\fip\|^2_\infty$. 
\end{lemma}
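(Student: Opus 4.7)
The plan is to decouple the integrand of $\Qint^{+,+}$ via a weighted Young inequality, so that the ``difference'' factor $\ufip(y)-\ufip(x)$ is absorbed into the coercive bilinear form $\BG(\ufip,\ufip)$, while the leftover ``drift'' factor $\ufip(x)\nabla\fip(x)/\fip(x)\cdot(y-x)$ produces an integral whose singularity is strictly milder than that of $\BG$, thanks to the extra factor $|y-x|$ that comes with the gradient of $\fip$.

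Concretely, I would write the integrand as $A(x,y)\cdot B(x,y)$ with
\[
A(x,y)=\bigl(\ufip(y)-\ufip(x)\bigr)\sqrt{\DG(u(x),u(y))}\,|y-x|^{-(N+\alpha)/2},
\quad
B(x,y)=\ufip(x)\,\frac{\nabla\fip(x)}{\fip(x)}\!\cdot\!(y-x)\,\sqrt{\DG(u(x),u(y))}\,|y-x|^{-(N+\alpha)/2},
\]
and apply $|AB|\le \tfrac14 A^2 + B^2$. After integration, the first piece gives exactly $\tfrac14\BG(\ufip,\ufip)$ (up to the harmless factor $c_{\alpha,N}^0$), and the remainder is the \emph{short-range} integral
\[
R:=\iint_{|x-y|\le\eta}\ufip(x)^2\,\Bigl|\tfrac{\nabla\fip(x)}{\fip(x)}\Bigr|^{2}\DG(u(x),u(y))\,\frac{\dx\dy}{|y-x|^{N+\alpha-2}}\cdotp
\]

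To bound $R$, I would exploit the support constraint $\{\ufip>0\}\subset B_{2^{1/\eps}}$ (which follows from $u\le 1+\Psi_\eps$ and the fact that $\fip=1+\Psi_\eps$ outside $B_{2^{1/\eps}}$). On this ball $\Psi_\eps\equiv 0$, so $\ufip\le u\le 1$, hence $\ufip(x)^2\lesssim \mathbf{1}_{\{\ufip(x)>0\}}$. Moreover, for $|y-x|\le\eta\le1$ we have $|y|\le 2^{1/\eps}+1$, so \eqref{e:DG} gives $\DG(u(x),u(y))\lesssim (2^{1/\eps})^{\eps(m-2)}=2^{m-2}$, which is a constant depending only on $m$. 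The inner $y$–integral is then
\[
\int_{|y-x|\le\eta}\frac{\dy}{|y-x|^{N+\alpha-2}}=\frac{\omega_N\,\eta^{\,2-\alpha}}{2-\alpha},
\]
which is finite \emph{precisely because $\alpha<2$}. Putting these three controls together yields $R\lesssim \|\nabla\fip/\fip\|_\infty^{2}\,|\{\ufip>0\}|$, which is the announced error term.

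The only technical obstacle worth flagging is the role of $\alpha<2$: this is exactly what makes the kernel $|y-x|^{2-N-\alpha}$ integrable near the diagonal, and it is the reason for cutting the integral at $|y-x|\le\eta$. The hypothesis $m\ge 2$ enters only to keep $\DG$ bounded through \eqref{e:DG} on the support of $\ufip$; convexity of $G$ is not used here. The parameter $\eta\in(0,1)$ may be kept generic at this stage — it will be fixed later (lemma \ref{l:ee3+}) when the complementary piece $\Qout^{+,+}$ is treated.
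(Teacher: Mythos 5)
Your proof is correct and follows essentially the same route as the paper: a weighted Cauchy--Schwarz/Young split that peels off $\tfrac14\BG(\ufip,\ufip)$, followed by bounding the leftover short-range integral using the support confinement $\{\ufip>0\}\subset B_{2^{1/\eps}}$ (so $\ufip\lesssim1$ and $\DG\lesssim1$ there) and the integrability of $|y-x|^{2-N-\alpha}$ near the diagonal for $\alpha<2$. The only cosmetic difference is that the paper carries the factor $(1\vee|x|\vee|y|)^{\eps(m-2)}$ into the $y$-integral \eqref{estim:vee} before bounding it, whereas you bound it pointwise first; both yield the same $\eta^{2-\alpha}|\{\ufip>0\}|$ estimate.
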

\begin{proof}
We first write a Cauchy-Schwarz type inequality and~\eqref{e:DG}:
\begin{align*}
\Qint^{+,+} &=  \QGint (\ufip, \ufip) \\
& = \iint_{|x-y| \le \eta} 
 \ufip(x) (\ufip(y)-\ufip(x)) \DG (u(x),u(y)) \frac{\nabla \fip (x)}{\fip(x)} \cdot (y-x) \frac{c_{\alpha,N} \dx \dy }{ |y-x|^{N+\alpha}}\\
& \le \frac14 \BG (\ufip,\ufip) +  \iint_{|x-y|\le \eta} (\ufip)^2 (x) (1 \vee |x| \vee |y|)^{\eps(m-2)} \frac{|\nabla \fip (x)|^2}{\fip^2(x)} 
\frac{c_{\alpha,N} \dx \dy }{ |y-x|^{N+\alpha-2}}\cdotp
\end{align*}
Since $\fip$ is Lipschitz continuous and $\ufip(x) \le u(x) \le (1 \vee |x|)^\eps$, we have for $m\geq2$:
\[
 \iint_{|x-y|\le \eta}  (\ufip)^2 (x) (1 \vee |x| \vee |y|)^{\eps(m-2)} \frac{|\nabla \fip (x)|^2}{\fip^2(x)} 
\frac{c_{\alpha,N} \dx \dy }{ |y-x|^{N+\alpha-2}}  \lesssim \eta^{2 - \alpha} |\{(u-\fip)>0\}|.
\]
In this integral, the variable $x$ is confined into $B_{2^{1/\epsilon}}$ and the $y$ variable is controlled by the following fact:
\begin{equation}\label{estim:vee}
\sup_{x\in B_{2^{1/\eps}}}
 \int_{|x-y|\le \eta} (1 \vee |x| \vee |y|)^{\eps(m-2)} \frac{c_{\alpha,N} \dy }{ |y-x|^{N+\alpha-2}} \lesssim \eta^{2-\alpha}
\end{equation}
since \( (1 \vee |x| \vee |y|)^{\eps(m-2)} \lesssim 1 \) if $|y-x| \le \eta < 1$ and $\alpha<2$.
\end{proof}
\begin{lemma}\label{l:ee3+}
For $\eta$ such that $\eta  \le \frac{c_{\alpha,N}^0}{2c_{\alpha,N} C_{\fip}} $, we have:
\[\Qint^{+,-} \le \frac12 \BG ((u-\fip)_+,-(u-\fip)_-).\]
Let us recall that the constants of the singular integrals
are defined by~\eqref{fractional} and \eqref{fractional_laplace}.
\end{lemma}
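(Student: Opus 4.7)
The plan is to compare $\Qint^{+,-}$ directly with the good extra term $\BG((u-\fip)_+,-(u-\fip)_-)$, exploiting two facts: first, $(u-\fip)_+$ and $(u-\fip)_-$ have disjoint supports, so both quantities collapse to integrals of the product $(u-\fip)_+(x)(u-\fip)_-(y)$; second, the short-range truncation $|x-y|\le\eta$ combined with the pointwise bound $|\nabla\fip/\fip|\le C_{\fip}$ converts the drift factor $\frac{\nabla\fip(x)}{\fip(x)}\cdot(y-x)$ into an extra power of $|y-x|$ that can be traded against $\eta$, at the cost of downgrading the kernel from $|y-x|^{-(N+\alpha)}$ to $|y-x|^{-(N+\alpha-1)}$.

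Concretely, I would first note that on $\{(u-\fip)_+(x)>0\}$ one has $(u-\fip)_-(x)=0$, so the difference $(-(u-\fip)_-(y)+(u-\fip)_-(x))$ inside $\QGint$ collapses to $-(u-\fip)_-(y)$ and
\[
\Qint^{+,-} = -c_{\alpha,N}\iint_{|x-y|\le\eta}
(u-\fip)_+(x)\,(u-\fip)_-(y)\,\DG(u(x),u(y))\,\frac{\nabla\fip(x)}{\fip(x)}\cdot(y-x)\,\frac{\dx\dy}{|y-x|^{N+\alpha}}.
\]
Applying $|\nabla\fip/\fip|\le C_{\fip}$ and, for $|y-x|\le\eta$, the elementary inequality $|y-x|^{-(N+\alpha-1)}\le\eta\,|y-x|^{-(N+\alpha)}$, one obtains
\[
|\Qint^{+,-}|\le C_{\fip}\,c_{\alpha,N}\,\eta\iint(u-\fip)_+(x)\,(u-\fip)_-(y)\,\DG(u(x),u(y))\,\frac{\dx\dy}{|y-x|^{N+\alpha}}.
\]

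On the other hand, expanding $\BG((u-\fip)_+,-(u-\fip)_-)$ and using the pointwise vanishing $(u-\fip)_+\,(u-\fip)_-\equiv 0$, the bilinear integrand $(v(y)-v(x))(w(x)-w(y))$ with $v=(u-\fip)_+$, $w=(u-\fip)_-$ reduces to $v(x)w(y)+v(y)w(x)$. A symmetry $x\leftrightarrow y$ in one of the summands then yields
\[
\BG((u-\fip)_+,-(u-\fip)_-) = 2c^0_{\alpha,N}\iint (u-\fip)_+(x)\,(u-\fip)_-(y)\,\DG(u(x),u(y))\,\frac{\dx\dy}{|y-x|^{N+\alpha}}.
\]
Combining the two displays gives $|\Qint^{+,-}|\le \dfrac{C_{\fip}\,c_{\alpha,N}\,\eta}{2c^0_{\alpha,N}}\,\BG((u-\fip)_+,-(u-\fip)_-)$, and the stated smallness condition $\eta\le c^0_{\alpha,N}/(2\,c_{\alpha,N}\,C_{\fip})$ closes the estimate.

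There is no real analytic obstacle here: the whole content of the lemma is the observation that the drift factor $\frac{\nabla\fip}{\fip}\cdot(y-x)$ gains exactly one order of regularity on the kernel, so that $\Qint^{+,-}$ becomes the \emph{same} singular integral as the good extra term up to a prefactor which the truncation parameter $\eta$ makes as small as one pleases. The only care required is bookkeeping on the three constants $c_{\alpha,N}$, $c^0_{\alpha,N}$ and $C_{\fip}$ to recover the exact threshold stated in the lemma; note in particular that the uniform bound $|\nabla\fip/\fip|\le C_{\fip}$ supplied in the hypotheses of theorem~\ref{thm:EE} is what makes this trade-off quantitative.
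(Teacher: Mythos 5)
Your proof is correct and follows essentially the same route as the paper: exploit the disjoint supports to reduce $\Qint^{+,-}$ to a single product term, trade the drift factor $\frac{\nabla\fip(x)}{\fip(x)}\cdot(y-x)$ for a power of $\eta$ via $|y-x|\le\eta$, and dominate the resulting integral by the good extra term. The only cosmetic difference is that you fully symmetrize $\BG((u-\fip)_+,-(u-\fip)_-)$ in $x\leftrightarrow y$ to pick up the factor $2c^0_{\alpha,N}$, whereas the paper simply discards one of the two non-negative summands; your version therefore yields the conclusion with a slightly larger admissible $\eta$, but this is a minor refinement, not a different argument.
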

\begin{proof}
The term $\Qint^{+,-}$ is easy to handle. We simply write 
\begin{align*} \Qint^{+,-} &= -\QGint ((u-\fip)_+,(u-\fip)-) \\
& = - \iint_{|x-y| \le \eta}  (u-\fip)_+(x) (u-\fip)_-(y) 
\DG (u(x),u(y)) \frac{\nabla \fip (x)}{\fip(x)} \cdot (y-x) \frac{c_{\alpha,N} \dx \dy }{ |y-x|^{N+\alpha}} \\
& \le \| \nabla \fip / \fip\|_\infty \eta \iint_{|x-y| \le \eta}  (u-\fip)_+(x) (u-\fip)_-(y) \DG (u(x),u(y))  \frac{c_{\alpha,N} \dx \dy }{ |y-x|^{N+\alpha}} \\
& \le \frac12 \BG ((u-\fip)_+,-(u-\fip)_-)
\end{align*}
provided $\eta$ is chosen small enough to ensure that $\eta c_{\alpha,N} \| \nabla \fip / \fip\|_\infty  \le \frac12 c_{\alpha,N}^0$. 
\end{proof}
\begin{lemma}\label{l:ee4+}
For $\alpha<2\leq m$, one has
\[ \Qint^{+,0} \le C_{\fip} |\{(u-\fip) >0 \}|,
\] where $C_{\fip} \gtrsim \|\nabla \fip\|_\infty
\| \nabla \fip /\fip\|_\infty$. 
\end{lemma}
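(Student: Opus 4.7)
The plan is to bound $\Qint^{+,0}$ directly, without any Cauchy--Schwarz absorption into the coercive or ``good extra'' term. The reason this is possible is that the integrand already carries two quasi-Lipschitz factors, one of order $|y-x|$ from the difference $\fip(y)-\fip(x)$, the other of order $|y-x|$ from the scalar product $\frac{\nabla\fip(x)}{\fip(x)}\cdot(y-x)$. Together these yield a $|y-x|^2$ factor which tames the $|y-x|^{-(N+\alpha)}$ singularity on the short-range region $|y-x|\le\eta$, precisely because $\alpha<2$.

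Concretely, I would first apply the two Lipschitz controls $|\fip(y)-\fip(x)|\le \|\nabla\fip\|_\infty|y-x|$ and $\bigl|\tfrac{\nabla\fip(x)}{\fip(x)}\cdot(y-x)\bigr|\le \|\nabla\fip/\fip\|_\infty|y-x|$ directly inside the singular integral defining $\Qint^{+,0}$. This gives
\[
\Qint^{+,0} \le \|\nabla\fip\|_\infty \|\nabla\fip/\fip\|_\infty \iint_{|x-y|\le\eta} \ufip(x)\, \DG(u(x),u(y))\, \frac{c_{\alpha,N}\,\dx\,\dy}{|y-x|^{N+\alpha-2}}\cdotp
\]

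Next, I would observe that $\ufip(x)=(u-\fip)_+(x)$ vanishes outside $B_{2^{1/\eps}}$, because by~\eqref{psiepsilon} and the assumption that $\fip=1+\Psi_\eps$ outside $B_{2^{1/\eps}}$ one has $u\le\fip$ there. Inside $B_{2^{1/\eps}}$ one has $\Psi_\eps\equiv 0$, so $u(x)\le 1$ and thus $\ufip(x)\le 1$. Consequently $\ufip(x)$ can be replaced, up to an absolute constant, by $\un_{\{(u-\fip)_+>0\}}(x)$.

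Finally, for each such $x\in B_{2^{1/\eps}}$ the inner $y$-integral is controlled by the very estimate~\eqref{estim:vee} already set up in the proof of lemma~\ref{l:ee2+}, which gives $\int_{|y-x|\le\eta} \DG(u(x),u(y))\, c_{\alpha,N}\,|y-x|^{2-N-\alpha}\,\dy\lesssim \eta^{2-\alpha}$: this is finite precisely because $\alpha<2$ and the condition $m\ge 2$ bounds $\DG$ via~\eqref{e:DG}. Integrating in $x$ over the support of $\ufip$ and using $\eta<1$ yields $\Qint^{+,0}\lesssim \|\nabla\fip\|_\infty\,\|\nabla\fip/\fip\|_\infty\,|\{(u-\fip)_+>0\}|$, as claimed. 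I do not anticipate any real obstacle: this is structurally the ``error-term'' half of lemma~\ref{l:ee2+}, in fact a bit simpler since no absorption into $\BG(\ufip,\ufip)$ is needed.
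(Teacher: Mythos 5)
Your proof is correct and follows the same route as the paper: bound both quasi-Lipschitz factors to gain $|y-x|^2$, use \eqref{e:DG} and the short-range estimate~\eqref{estim:vee} (valid since $\alpha<2$), then replace $\ufip(x)$ by the indicator of its support using $\ufip\le 1$ on $B_{2^{1/\eps}}$. The paper's own proof makes exactly this point, noting that the regularity of $\fip$ supplies local integrability directly, with no Cauchy--Schwarz absorption needed.
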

\begin{proof}
The proof is similar to that of lemma \ref{l:ee2+}, but this time the regularity of $\fip$ provides the local integrability, instead of using Cauchy-Schwarz:
\begin{align*}
\Qint^{+,0}  & = \iint_{|x-y|\le \eta} \ufip(x) (\fip(y)-\fip(x)) 
\DG (u(x),u(y)) \frac{\nabla \fip (x)}{\fip(x)} \cdot (y-x) \frac{c_{\alpha,N} \dx \dy }{ |y-x|^{N+\alpha}} \\
& \le \|\nabla \fip\|_\infty \| \nabla \fip /\fip\|_\infty \int_{B_{2^{1/\epsilon}}} \ufip(x)  
\left\{ \int_{|x-y|\le \eta} (1 \vee |x| \vee |y|)^{\eps(m-2)}    \frac{c_{\alpha,N}  \dy }{ |y-x|^{N+\alpha-2}} \right\} \dx \\
& \lesssim |\{(u-\fip)>0\}|.
\end{align*}
Once more, we used \eqref{e:DG}, the fact that the variable $x$ is confined into $B_{2^{1/\epsilon}}$ 
and that $\alpha<2\leq m$.
\end{proof}
\begin{lemma}\label{l:ee5+}
For $\alpha>1$ and $\epsilon<(\alpha-1)/(m-1)$, we have:
\[ \Qout^{+,+} \le C_{\fip} |\{ (u -\fip)>0\}|,
\]
with $C_{\fip} \gtrsim \|\nabla \fip / \fip\|_\infty$. 
\end{lemma}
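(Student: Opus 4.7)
The plan is to bound $\Qout^{+,+}$ crudely by the measure term, without trying to compete against the coercive term $\BG(\ufip,\ufip)$ at this stage: no Cauchy–Schwarz is needed, unlike in lemma~\ref{l:ee2+}. The reason is structural: since we are integrating over $|x-y|\ge\eta$, the kernel has only the milder singularity $|y-x|^{-(N+\alpha-1)}$, so we can afford to keep the factor $(y-x)$ absorbed into absolute values.

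First I would exploit the support constraint: $\ufip$ vanishes outside $B_{2^{1/\eps}}$ because $u \le 1+\Psi_\eps = \fip$ there; and inside $B_{2^{1/\eps}}$ we have $\Psi_\eps\equiv 0$, so $u\le 1$ and a fortiori $\ufip\le 1$ pointwise. This immediately gives $|\ufip(y)-\ufip(x)|\le 2$ on the support of $\ufip(x)$. After pulling out $\|\nabla\fip/\fip\|_\infty$ and taking absolute values, the problem reduces to showing the uniform bound
\[
\sup_{x\in B_{2^{1/\eps}}} \int_{|y-x|\ge\eta} \DG(u(x),u(y))\, \frac{\dy}{|y-x|^{N+\alpha-1}} \lesssim 1,
\]
after which $\int \ufip(x)\,\dx\le |\{u>\fip\}|$ closes the argument.

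To establish this uniform bound I would split the $y$-region at $|y|\sim 2^{1/\eps}$. In the near region $\eta\le|y-x|\le O(2^{1/\eps})$ the kernel $\DG$ stays bounded by a constant (by \eqref{e:DG}, since $|x|\vee|y|$ is controlled), and the remaining $\int r^{-\alpha}\,dr\lesssim \eta^{1-\alpha}$ is finite precisely because $\alpha>1$. This is the only place the hypothesis $\alpha>1$ is used; the explicit $\eta$-dependence is harmless, as $\eta$ has already been fixed in lemma~\ref{l:ee3+} and the resulting constant absorbs into~$C_{\fip}$.

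The only mildly delicate step is the far tail $|y|>R_0:=2\cdot 2^{1/\eps}$, where $|y-x|\sim|y|$ and we must invoke the growth assumption $u(y)\le(1\vee|y|)^{\eps}$. By~\eqref{e:DG}, $\DG(u(x),u(y))\lesssim|y|^{\eps(m-2)}$, so the tail integral behaves like $\int_{R_0}^{\infty} r^{\eps(m-2)-\alpha}\,dr$, which converges iff $\eps(m-2)<\alpha-1$. The hypothesis $\eps<(\alpha-1)/(m-1)$ gives the stronger inequality $\eps(m-1)<\alpha-1$, hence $\eps(m-2)<\alpha-1-\eps<\alpha-1$, and convergence follows with room to spare. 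Combining the near and far estimates yields the claimed bound with $C_{\fip}\gtrsim\|\nabla\fip/\fip\|_\infty$, as asserted.
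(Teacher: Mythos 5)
Your proof is correct, and it is a mild but genuine simplification of the paper's argument. The paper decomposes $\ufip(y)-\ufip(x)$ by the triangle inequality into two terms $\ufip(y)+\ufip(x)$, then uses the coarse bound $\ufip(y)\le(1\vee|y|)^\eps$ in the first piece, which yields a tail kernel $(1\vee|y|)^{\eps(m-1)}|y-x|^{-(N+\alpha-1)}$ and needs $\eps(m-1)<\alpha-1$. You instead observe that $\ufip$ is \emph{globally} bounded by $1$ (indeed by $3/4$, since $u\le 1$ on $B_{2^{1/\eps}}$, $\fip>1/4$ there, and $\ufip\equiv 0$ outside), so $|\ufip(y)-\ufip(x)|\le 2$ can be pulled out before doing anything else. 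This collapses the two terms into one and leaves the weaker convergence requirement $\eps(m-2)<\alpha-1$, which of course is implied by the stated hypothesis $\eps<(\alpha-1)/(m-1)$. Your near/far split at $|y|\sim 2^{1/\eps}$ correctly isolates the only place $\alpha>1$ is used (the near-field $\int r^{-\alpha}\,dr$) from the only place the growth condition on $\eps$ is used (the far-field tail), which is a clean way to see the roles of the two hypotheses. One cosmetic remark: since both $\ufip(x),\ufip(y)\in[0,3/4]$, the sharper bound $|\ufip(y)-\ufip(x)|\le 3/4$ holds, though of course $2$ is enough.
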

\begin{proof}
 We use \eqref{e:DG},
 the boundedness of  $\nabla \fip/\fip$ and $\ufip (y)\le (1\vee |y|)^\eps$ in order to get
\begin{align*}
\Qout^{+,+}  \lesssim & \iint_{|x-y| \ge \eta} 
 \ufip(x) (\ufip(y)+\ufip(x)) (1 \vee |x| \vee |y|)^{\eps(m-2)} \frac{c_{\alpha,N} \dx \dy }{ |y-x|^{N+\alpha-1}} \\
 \lesssim &  \int \ufip(x) \left\{ \int_{|y-x|\ge \eta} 
 (1 \vee |y|)^{\eps(m-1)}  \frac{c_{\alpha,N}  \dy }{ |y-x|^{N+\alpha-1}}\right\} \dx \\
& +\int \ufip(x)^2 \left\{ \int_{|y-x|\ge \eta} (1 \vee |y|)^{\eps(m-2)}  \frac{c_{\alpha,N}  \dy }{ |y-x|^{N+\alpha-1}}\right\} \dx. 
\end{align*}
We use here in an essential way that $\alpha >1$ and $\epsilon<(\alpha-1)/(m-1)$
in order to get that  the two terms in braces are $\lesssim 1$. 
Note that as $m\geq2$, one has $\alpha-1>\epsilon(m-1)>\epsilon(m-2)\geq0$.
This yields the desired estimate. 
\end{proof}
\begin{lemma}\label{l:ee6+}
For $\alpha>1$ and  $\epsilon<(\alpha-1)/(m-1)$, we have:
\[ \Qout^{+,-} \le C_{\fip} |\{(u-\fip)>0\}|,
\]
with $C_{\fip} \gtrsim \|\nabla \fip /\fip\|_\infty$. 
\end{lemma}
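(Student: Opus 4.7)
The proof should follow the same blueprint as Lemma~\ref{l:ee5+}, with the observation that the pointwise bounds on $(u-\fip)_-(y)$ and on $(u-\fip)_+(y)$ are of the same order on their respective supports. My plan is to start from the explicit expression
\[
\Qout^{+,-} = -\iint_{|x-y|\ge\eta} (u-\fip)_+(x)(u-\fip)_-(y)\, \DG(u(x),u(y))\, \frac{\nabla\fip(x)}{\fip(x)}\cdot(y-x)\, \frac{c_{\alpha,N}\dx\dy}{|y-x|^{N+\alpha}},
\]
take absolute values, and then produce four pointwise ingredients: (i) $|\nabla\fip/\fip|(x) \le \|\nabla\fip/\fip\|_\infty \lesssim C_{\fip}$, (ii) the factor $|y-x|$ in the numerator combined with the denominator produces the kernel $|y-x|^{-(N+\alpha-1)}$, (iii) the convexity bound~\eqref{e:DG} yields $\DG(u(x),u(y)) \lesssim (1\vee|x|\vee|y|)^{\eps(m-2)}$, and (iv) on the support of $(u-\fip)_-(y)$ we have $u(y)\le \fip(y)$, hence $(u-\fip)_-(y) \le \fip(y) \lesssim (1\vee|y|)^{\eps}$.

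The next step is to notice that the $x$-variable is confined to $B_{2^{1/\eps}}$, since $(u-\fip)_+$ vanishes outside this ball (as $\fip = 1+\Psi_\eps \ge u$ there). In particular $(1\vee|x|)\lesssim 1$ on the support, so we can absorb the $|x|$ factor in $(1\vee|x|\vee|y|)^{\eps(m-2)}$ and replace the kernel-related piece by $(1\vee|y|)^{\eps(m-2)}$. Multiplying the remaining $y$-factors gives $(1\vee|y|)^{\eps(m-1)}$, which leads to the estimate
\[
\Qout^{+,-} \lesssim C_{\fip} \int \ufip(x)\left\{ \int_{|y-x|\ge\eta} (1\vee|y|)^{\eps(m-1)} \frac{\dy}{|y-x|^{N+\alpha-1}}\right\}\dx.
\]

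The key technical point, which is essentially the same as in Lemma~\ref{l:ee5+}, is to show that the inner integral in braces is $\lesssim 1$ uniformly in $x\in B_{2^{1/\eps}}$. On $\{|y-x|\le 1\}$ this is immediate since $(1\vee|y|)^{\eps(m-1)}$ is bounded and $N+\alpha-1 < N$. On $\{|y-x|\ge 1\}$, the integrand behaves like $|y|^{\eps(m-1)-(N+\alpha-1)}$ at infinity, which is integrable in $\RN$ exactly when $\eps(m-1) < \alpha-1$; this is the role of the hypotheses $\alpha>1$ and $\eps<(\alpha-1)/(m-1)$. Once this uniform bound is secured, one finishes with
\[
\int \ufip(x)\dx \le \|\ufip\|_\infty\, |\{(u-\fip)_+ > 0\}| \lesssim |\{(u-\fip)>0\}|,
\]
using $\ufip(x)\le u(x)\le (1\vee|x|)^\eps \le 2$ on $B_{2^{1/\eps}}$, which yields the required estimate. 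The main obstacle here is purely bookkeeping: tracking exponents to confirm that the threshold $\eps(m-1)<\alpha-1$ gives integrability at infinity; the $\alpha>1$ assumption merely guarantees this threshold is positive so that nontrivial $\eps>0$ is admissible.
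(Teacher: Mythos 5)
Your proof is correct and follows essentially the same route as the paper's: take absolute values, use the boundedness of $\nabla\fip/\fip$, the bound \eqref{e:DG} on $\DG$, and $(u-\fip)_-(y)\le\fip(y)\lesssim(1\vee|y|)^\eps$ to reduce to an inner $\dy$-integral that converges uniformly for $x\in B_{2^{1/\eps}}$ precisely when $\eps(m-1)<\alpha-1$, and then bound $\int\ufip\,\dx$ by the measure of $\{(u-\fip)_+>0\}$. The only difference is that you spell out the near/far split of the inner integral, which the paper leaves implicit.
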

\begin{proof}
We use \eqref{e:DG}, the boundedness of $\nabla \fip/\fip$  and $(u-\fip)_-(y) \leq \fip(y) \le (1 \vee |y|)^\eps$ in order to get
\begin{align*}
\Qout^{+,-}  \lesssim & \iint_{|x-y| \ge \eta} 
 (u-\fip)_+(x)  (u-\fip)_-(y)  (1 \vee |x| \vee |y|)^{\eps(m-2)}  \frac{c_{\alpha,N} \dx \dy }{ |y-x|^{N+\alpha-1}} \\
\lesssim & \int (u-\fip)_+(x) \left\{ \int_{|x-y| \ge \eta} (1 \vee |y|)^{\eps(m-1)} \frac{c_{\alpha,N}  \dy }{ |y-x|^{N+\alpha-1}} \right\} \dx 
\\
\lesssim & \int (u-\fip)_+(x)   \dx \\
\lesssim & \; C |\{(u-\fip)>0\}|.
\end{align*}
The integral in  braces converges because $\epsilon(m-1)<\alpha-1$.
This yields the desired estimate. 
\end{proof}
\begin{lemma}\label{l:ee7+}
For $\alpha>1$ and  $\epsilon<(\alpha-1)/(m-1)$, we have:
\[ \Qout^{+,0} \le C_{\fip} |\{(u-\fip)>0\}|,
\]
 with $C_{\fip} \gtrsim \|\nabla \fip /\fip\|_\infty$. 
\end{lemma}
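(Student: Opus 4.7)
The plan is to mimic the approach used for $\Qout^{+,+}$ and $\Qout^{+,-}$ in the previous two lemmas, but now the fact that we must control the difference $\fip(y)-\fip(x)$ forces a further splitting of the $y$-domain into a near region $\{\eta\le|y-x|\le 1\}$, where the Lipschitz regularity of $\fip$ is used, and a far region $\{|y-x|>1\}$, where only the size of $\fip$ matters.

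Starting from the definition of $\Qout^{+,0}$, I would use $|\nabla\fip/\fip|\lesssim 1$ to absorb that factor into the implicit constant, rewrite $(y-x)/|y-x|^{N+\alpha}$ as $1/|y-x|^{N+\alpha-1}$, and invoke~\eqref{e:DG} to bound $\DG(u(x),u(y))\lesssim (1\vee|x|\vee|y|)^{\eps(m-2)}$. Recalling that $\{(u-\fip)_+>0\}\subset B_{2^{1/\eps}}$, this yields
\[
\Qout^{+,0} \lesssim \int_{B_{2^{1/\eps}}} (u-\fip)_+(x)\,\Big\{I_{\rm near}(x)+I_{\rm far}(x)\Big\}\,\dx,
\]
where $I_{\rm near}$ and $I_{\rm far}$ denote the $y$-integrals restricted respectively to $\{\eta\le|y-x|\le 1\}$ and to $\{|y-x|>1\}$.

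For $I_{\rm near}$, since $x$ is bounded and $|y-x|\le 1$, the variable $y$ stays in a bounded region, so $(1\vee|x|\vee|y|)^{\eps(m-2)}\lesssim 1$. Applying $|\fip(y)-\fip(x)|\le \|\nabla\fip\|_\infty |y-x|$ collapses one power of $|y-x|$, leaving an integral $\int_{\eta\le|y-x|\le 1}|y-x|^{2-N-\alpha}\dy$ which converges because $\alpha<2$. For $I_{\rm far}$, I would bound $|\fip(y)-\fip(x)|\le \fip(y)+\fip(x)\lesssim (1\vee|y|)^\eps$ (using that $x$ is bounded), which gives an integrand of order $(1\vee|y|)^{\eps(m-1)}|y-x|^{-(N+\alpha-1)}$. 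For $|y|$ large, $|y-x|\sim|y|$, so the $y$-integral reduces to $\int_R^\infty r^{\eps(m-1)-\alpha}\,dr$, finite exactly when $\eps(m-1)<\alpha-1$ — which is the standing assumption of the lemma.

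Combining the two parts yields a uniform bound on the inner $y$-integrals, and since $(u-\fip)_+\lesssim 1$ on its support, one concludes
\[
\Qout^{+,0}\lesssim \int (u-\fip)_+(x)\,\dx \lesssim |\{(u-\fip)>0\}|.
\]
The only delicate point is verifying the convergence of the far integral uniformly for $x\in B_{2^{1/\eps}}$; this is the step that pins down the threshold $\epsilon_0=(\alpha-1)/(m-1)$ in the statement of theorem~\ref{thm:EE} for $\alpha\in(1,2)$.
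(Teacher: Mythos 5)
Your proof is correct. The core idea — bound $\DG$ by $(1\vee|x|\vee|y|)^{\eps(m-2)}$ via \eqref{e:DG}, use the boundedness of $\nabla\fip/\fip$ and the confinement of $x$ to $B_{2^{1/\eps}}$, and then exploit $\alpha>1$ together with $\eps(m-1)<\alpha-1$ for the convergence of the $y$-integral at infinity — matches the paper's. The only divergence is that you further split the outer region $\{|y-x|\ge\eta\}$ into $\{\eta\le|y-x|\le1\}$ (using the global Lipschitz bound on $\fip$ to gain a power of $|y-x|$) and $\{|y-x|>1\}$. The paper avoids this extra split by applying the crude growth bound $|\fip(y)-\fip(x)|\lesssim(1+|x|+|y|)^\eps$ throughout $\{|y-x|\ge\eta\}$: the resulting integrand $\int_{|y-x|\ge\eta}(1\vee|y|)^{\eps(m-1)}|y-x|^{-(N+\alpha-1)}\dy$ converges at infinity under the same condition $\eps(m-1)<\alpha-1$, while the annulus $\{\eta\le|y-x|\le1\}$ contributes only an $\eta$-dependent constant, which is harmless since $\eta$ is fixed (by Lemma~\ref{l:ee3+}) in terms of $C_{\fip}$. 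Your route is closer to the original \cite{CSV} treatment and buys an $\eta$-independent bound in the near region; the paper's is marginally shorter. Both are valid, and both correctly identify $\eps(m-1)<\alpha-1$ as the constraint that drives the choice $\eps_0=(\alpha-1)/(m-1)$ when $\alpha\in(1,2)$.
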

\begin{proof}
We offer here a slightly simpler proof than in~\cite{CSV}.
Again, let us observe that $x$ is confined in $B_{2^{1/\epsilon}}$ in the following integral
so one can use $|\fip(y)-\fip(x)|\lesssim (1+|x|+|y|)^\epsilon$ and \eqref{e:DG}:
\begin{align*}
\Qout^{+,0}  = & \iint_{|x-y| \ge \eta} 
 \ufip(x) (\fip(y)-\fip(x)) \DG (u(x),u(y)) \frac{\nabla \fip (x)}{\fip(x)} \cdot (y-x) \frac{c_{\alpha,N} \dx \dy }{ |y-x|^{N+\alpha}} \\
 \lesssim &  \iint_{|x-y| \ge \eta}  \ufip(x)   (1 \vee |y|)^{\eps (m-1)}  \frac{ \dx \dy }{ |y-x|^{N+\alpha-1}} \\
\lesssim & \int \ufip (x) \left\{ \int_{|y-x| \ge \eta}  (1  \vee |y|)^{\eps (m-1)}  \frac{\dx \dy }{ |y-x|^{N+\alpha-1}} \right\} \dx \\
\lesssim & \enspace |\{ \ufip >0\}|.
\end{align*}
The integral in braces converges because $\epsilon(m-1)<\alpha-1$.
\end{proof}

\begin{remark}
Note that up to now, as $\alpha<2\leq m$, the most stringent condition on $\epsilon$ is
\[
0<\epsilon < \min\left\{  \frac{\alpha}{m}, \frac{\alpha-1}{m-1} \right\}
= \frac{\alpha-1}{m-1} = \epsilon_0.
\]
\end{remark}

\subsection{Estimates for $\mathcal{E}_-$ for $\alpha>1$}

In order to estimate $\QG ( -(u-\fim)_-, u)$, we split it again as follows (see \cite{CSV}), 
but we group the terms differently:
\[ \QG (-\ufim, u) = \QG (\ufim, \ufim) + \QG (-\ufim,(u-\fim)_++\fim).\]
Let us point out  that the previous sub-split, which depends on the size of $|x-y|$, will still be necessary
for each term, but the cut-off value $\eta$ will be different between the proof of lemma~\ref{l:ee2-}
and that of lemma~\ref{l:ee3-}.
\begin{lemma}\label{l:ee2-}
For $\alpha-1>\epsilon(m-2)\geq0$, we have 
\[\QG (\ufim,\ufim)  \le \frac14 \BG (\ufim,\ufim) + C C_{\fim} |\{\ufim >0 \}| .\]
\end{lemma}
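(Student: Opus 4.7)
My approach mirrors the structure of lemmas \ref{l:ee2+} and \ref{l:ee5+} for $\mathcal{E}_+$, adapted to handle the potentially singular weight $\nabla\fim/\fim$ that is allowed to blow up as $\fim\to 0$ near $\partial B_2$. The starting observation is that $\ufim = (u-\fim)_- \leq \fim$ pointwise on $\RN$ (and vanishes outside $B_2$ since $\fim\equiv 0$ there and $u\geq 0$). Combined with $|\nabla\fim/\fim| \leq C_{\fim}\fim^{-1/m_0}$, the assumption $m_0\geq 2$, and $\fim\leq 1$, this yields the two key pointwise bounds
\[
\ufim(x)\left|\frac{\nabla\fim(x)}{\fim(x)}\right| \leq C_{\fim}\fim(x)^{1-1/m_0} \leq C_{\fim}
\qquad\text{and}\qquad
\ufim(x)^2\left|\frac{\nabla\fim(x)}{\fim(x)}\right|^2 \leq C_{\fim}^2,
\]
which tame the singular weight on the support of $\ufim$.

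I would then split $\QG(\ufim,\ufim) = \QGint(\ufim,\ufim) + \QGout(\ufim,\ufim)$ at some scale $\eta\in(0,1)$ to be fixed. For the near-diagonal piece, a Young inequality as in lemma \ref{l:ee2+} splits off $\tfrac14 \BG(\ufim,\ufim)$, leaving an error dominated by
\[
\iint_{|x-y|\leq\eta}\ufim(x)^2 \left|\frac{\nabla\fim(x)}{\fim(x)}\right|^2 \DG(u(x),u(y))\, \frac{c_{\alpha,N}\,\dx\,\dy}{|y-x|^{N+\alpha-2}}\cdotp
\]
The second pointwise bound controls the prefactor by $C_{\fim}^2$; the estimate \eqref{e:DG} gives $\DG\lesssim 1$ on the integration domain, since $x\in B_2$ and $|x-y|\leq\eta<1$ force both variables to be bounded; and the remaining kernel integrates to $\lesssim \eta^{2-\alpha}$ because $\alpha<2$. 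This yields a $\lesssim C_{\fim}^2 \eta^{2-\alpha}|\{\ufim>0\}|$ bound for the near-diagonal contribution.

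For the far-diagonal piece $\QGout$, I would use the crude bound $|\ufim(y)-\ufim(x)|\leq \ufim(y)+\ufim(x)$ and factor out $\ufim(x)|\nabla\fim(x)/\fim(x)|\leq C_{\fim}$ via the first pointwise estimate. The $\ufim(y)$-term is confined to $y\in B_2$, where $\DG\lesssim 1$, and integrates to $\lesssim C_{\fim}\,\eta^{1-\alpha}|\{\ufim>0\}|$ (finite since $\alpha>1$ is implicit in $\alpha-1>\epsilon(m-2)\geq 0$). The $\ufim(x)$-term becomes a $\RN$-wide integral
\[
C_{\fim}^2 \int_{\{\ufim>0\}}\left(\int_{|y-x|\geq\eta} \DG(u(x),u(y))\, \frac{c_{\alpha,N}\,\dy}{|y-x|^{N+\alpha-1}}\right)\dx,
\]
whose tail, by \eqref{e:DG}, behaves like $|y|^{\epsilon(m-2)-\alpha+1-N}\,\dy$ and converges precisely when $\alpha-1>\epsilon(m-2)$; this is where the sharp hypothesis is used. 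Summing the three contributions and fixing $\eta$ once and for all gives the stated inequality. The main obstacle is this tail analysis at infinity: the singular weight $\nabla\fim/\fim$ is harmlessly absorbed by the $\ufim\leq\fim$ cancellation enabled by $m_0\geq 2$, but the convergence of the outer integral at $|y|\to\infty$ is delicate and relies entirely on the stated range of $\epsilon$.
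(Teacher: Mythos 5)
Your proof is correct and follows essentially the same approach as the paper: split $\QG(\ufim,\ufim)$ at a scale $\eta$, peel off $\tfrac14\BG$ from the near-diagonal part by Young's inequality, tame the singular weight $\nabla\fim/\fim$ via $\ufim\le\fim$ and $m_0\ge 2$, and use $\alpha-1>\epsilon(m-2)$ for convergence of the outer kernel. The only deviations are cosmetic (the paper bounds $|\ufim(x)-\ufim(y)|\le 1$ directly rather than splitting into two sub-terms, and a harmless $C_{\fim}^2$ versus $C_{\fim}$ in your prefactor), which do not affect the argument.
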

\begin{proof}
We first write 
\[ \QG (\ufim,\ufim)   = \Qint + \Qout\] 
with
\begin{align*}
\Qint = & \int_{|x-y| \le \eta} \ufim (x) (\ufim (x) -\ufim (y)) \DG (u(x),u(y)) \frac{\nabla \fim (x)}{\fim(x)} \cdot (y-x) \frac{c_{\alpha,N} \dx \dy }{ |y-x|^{N+\alpha}}\\
\Qout = & \int_{|x-y| \ge \eta} \ufim (x) (\ufim (x) -\ufim (y)) \DG (u(x),u(y)) \frac{\nabla \fim (x)}{\fim(x)} \cdot (y-x) \frac{c_{\alpha,N} \dx \dy }{ |y-x|^{N+\alpha}}
\end{align*}
for some $\eta>0$ of arbitrary value.

\medskip
We argue as in the proof of lemma~\ref{l:ee2+} by writing first, thanks to \eqref{e:DG}  and the properties of $\fim$, that:
\begin{align*}
\Qint & \le \frac14 \BG (\ufim,\ufim) +  C \iint_{|x-y|\leq \eta} (\ufim)^2 (x) (1 \vee |x| \vee |y|)^{\eps(m-2)} \frac{|\nabla \fim (x)|^2}{\fim^2(x)} 
\frac{c_{\alpha,N} \dx \dy }{ |y-x|^{N+\alpha-2}} \\
& \le \frac14 \BG (\ufim,\ufim) + CC_{\fim} \int (\ufim)^2 (x) \fim^{-2/m_0} (x) \dx .
\end{align*}
Using $\ufim \le \fim$ yields the desired estimate for this term since $m_0 \ge 1$.
Note that the integral in
$\dy$ did converge because of the assumption $\alpha<2$.

\medskip
For the outer part
 we use \eqref{e:DG},  $\nabla \fim/\fim\leq C_{\fim} \fim^{-1/m_0}$
 and $\ufim (y)\le \fim(y) \le 1$ in order to get
\begin{align*}
\Qout  \lesssim & \int
 \ufim(x) \fim^{-1/m_0}(x) \left\{ \int_{|x-y| \ge \eta}  (1 \vee |x| \vee |y|)^{\eps(m-2)} \frac{c_{\alpha,N} \dy }{ |y-x|^{N+\alpha-1}} \right\}  \dx \\
 \lesssim &  
 C_{\fim} \int \ufim (x) \fim^{-1/m_0} (x) \dx \leq C_{\fim} \int_{\ufim>0} \fim^{1-1/m_0} (x) \dx\\
 \lesssim & C_{\fim} |\{\ufim >0\}|.
\end{align*}
We use here in an essential way the fact that $\alpha-1>\epsilon(m-2)\geq0$
to ensure the convergence of the $\dy$ integral in braces.
This yields the desired estimate.

\end{proof}
\begin{lemma}\label{l:ee3-}
For $\eps < \frac{\alpha-1}{m-1}$, we have 
\[\QG ((u-\fim)_+ + \fim,u) \le  -\frac12 \BG ((u-\fim)_+,(u-\fim)_-)  + C C_{\fim}  |\{ \ufim >0 \}|.\]
\end{lemma}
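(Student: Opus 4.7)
Write $A := (u-\fim)_+$, $B := \ufim$, $F := \fim$, so that $u = F + A - B$. The multiplier $\nabla F/F$ confines $x$ to $B_2$ where $u,F\leq 1$. By bilinearity of $\QG$,
\[
\QG(A+F,u) = \QG(A+F,F) + \QG(A+F,A) - \QG(A+F,B).
\]
Since $A\cdot B\equiv 0$, when expanding $-\QG(A+F,B)$ the only contribution containing the cross product $A(x)B(y)$ is
\[
\mathcal{I}_1 := -\iint A(x)B(y)\,\DG(u(x),u(y))\,\frac{\nabla F(x)}{F(x)}\cdot(y-x)\,\frac{c_{\alpha,N}\dx\,\dy}{|y-x|^{N+\alpha}},
\]
and this is where the good extra term $-\tfrac12\BG(A,B)$ will arise. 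The remaining pieces either involve $F(x)B(y)$, $F(x)B(x)$, $F(y)-F(x)$, or $A(y)-A(x)$ as the active increment and will contribute only to the error.

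For $\mathcal{I}_1$, I would use Cauchy--Schwarz / AM--GM at the integral level, with the observation $-\tfrac12\BG(A,B) = c^0_{\alpha,N}\iint A(x)B(y)\,\DG\,\dx\,\dy/|y-x|^{N+\alpha}$, to obtain
\[
|\mathcal{I}_1| \leq -\tfrac12\BG(A,B) + \tfrac{c^2_{\alpha,N}}{4c^0_{\alpha,N}}\iint A(x)B(y)\,\DG\,\frac{|\nabla F(x)|^2}{F(x)^2}\,\frac{\dx\,\dy}{|y-x|^{N+\alpha-2}},
\]
the parameter having been tuned so that the leading coefficient is exactly $\tfrac12$. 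The residue is handled by a near/far split at $|y-x| = 1$: $\alpha<2$ yields near-field convergence of $|y-x|^{2-N-\alpha}$, while $\eps(m-1)<\alpha-1$ yields far-field convergence (as in lemma~\ref{l:ee2-}). Using $|\nabla F/F|^2\leq C_{\fim}^2 F^{-2/m_0}$, $B\leq F\leq 1$, $A\leq 1$ on $B_2$, together with the fact that $B(y)>0$ forces $y\in\{\ufim>0\}$, the residue is bounded by $CC_{\fim}^2|\{\ufim>0\}|$.

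The remaining pieces, namely $\QG(A+F,F)$, $\QG(A+F,A)$, and the non-cross parts of $-\QG(A+F,B)$, do not yield additional $\BG(A,B)$ contributions. They are controlled by the methods of lemma~\ref{l:ee2-}: the factor $F(x)$ multiplying $\nabla F/F$ in several of them simplifies to the bounded weight $\nabla F(x)$, and a near/far split at some scale $\eta$, combined with \eqref{e:DG} and $\eps(m-1)<\alpha-1$, gives a bound $\leq CC_{\fim}|\{\ufim>0\}|$ after using $B\leq F$ to reduce $x$-integrations to $\int_{B_2} F^{1-1/m_0} \lesssim |\{\ufim>0\}|$ via the support of $B$.

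The main obstacle is achieving the sharp constant $\tfrac12$ in front of $\BG(A,B)$. In lemma~\ref{l:ee3+}, the uniform bound on $\nabla\fip/\fip$ allowed a naive choice of small cutoff $\eta$; here, the unbounded weight forces the integral-level Cauchy--Schwarz with careful bookkeeping of the constants $c_{\alpha,N}$ and $c^0_{\alpha,N}$. A secondary difficulty is that the residue integral from the AM--GM must be shown to scale as $|\{\ufim>0\}|$ rather than the trivial bound $|B_2|$, which requires $m_0$ large enough to ensure integrability of $F^{-2/m_0}$ near $\partial B_2$ and a genuine exploitation of the support constraint on $B$ during the $y$-integration.
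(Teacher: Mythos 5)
There is a genuine gap, and it originates in a misreading of the statement (which, in fairness, contains a typo). From the split displayed just before the lemma, namely
\[
\QG(-\ufim,u)=\QG(\ufim,\ufim)+\QG(-\ufim,(u-\fim)_++\fim),
\]
and from the proof, where the inner and outer integrals are written with $\ufim(x)\bigl(u\vee\fim(x)-u\vee\fim(y)\bigr)$ as integrand, the quantity that is actually estimated is $\QG(-\ufim,\,u\vee\fim)$: the first slot (which multiplies the singular weight $\nabla\fim(x)/\fim(x)$) carries $\ufim(x)=(u-\fim)_-(x)$, and it is $u\vee\fim$ that enters as the increment $w(y)-w(x)$. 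The lemma header has the arguments of $\QG$ interchanged. You took the header literally and placed $A+F=u\vee\fim$ in the first slot; this single change sinks the argument.

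The key structural fact in the paper is $\ufim(x)\le\fim(x)$, which tames the weight: $\ufim(x)\,|\nabla\fim(x)|/\fim(x)\le|\nabla\fim(x)|\le C_{\fim}\fim^{1-1/m_0}(x)\le C_{\fim}$. Your decomposition pairs the weight with $(A+F)(x)$, and in the cross term $\mathcal{I}_1$ with $A(x)=(u-\fim)_+(x)$ specifically. But $A$ is \emph{not} dominated by $\fim$: near $\partial B_2$ one can have $\fim\to 0$ while $A(x)\approx u(x)$ stays of order $1$. Consequently the residue from your Cauchy--Schwarz step involves
\[
\iint A(x)B(y)\,\DG\,\frac{|\nabla\fim(x)|^2}{\fim^2(x)}\,\frac{\dx\dy}{|y-x|^{N+\alpha-2}}
\ \gtrsim\ \int_{B_2} A(x)\,\fim^{-2/m_0}(x)\,\dx,
\]
and since $|\nabla\fim/\fim|\lesssim\fim^{-1/m_0}$ forces $\fim\lesssim d^{m_0}$ near $\partial B_2$ ($d$ the boundary distance), one has $\fim^{-2/m_0}\gtrsim d^{-2}$, whose integral over $B_2$ diverges for \emph{every} $m_0$. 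Taking $m_0$ ``large enough'', as you propose at the end, does not help: $d^{-2}$ never becomes integrable in a single transverse variable. Independently of convergence, your $x$-integration runs over $\{(u-\fim)_+>0\}$, which is disjoint from $\{\ufim>0\}$, so the claimed bound by $C\,C_{\fim}|\{\ufim>0\}|$ cannot come from the $x$-side, and the $y$-integration cannot rescue a divergent $x$-integral. The same objection applies to the companion pieces $\QG(A+F,F)$ and $\QG(A+F,A)$ in your decomposition.

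If you instead put $-\ufim$ in the first slot, i.e.\ work with $\QG(-\ufim,u\vee\fim)$ as the paper does, your Cauchy--Schwarz idea becomes sound: the residue is controlled by $\int_{\{\ufim>0\}}\ufim(x)\,\fim^{-2/m_0}(x)\,\dx\le\int_{\{\ufim>0\}}\fim^{1-2/m_0}(x)\,\dx\le|\{\ufim>0\}|$ for $m_0\ge 2$, and the $x$-support is the correct one. At that point your route (optimised AM--GM with explicit constants $c_{\alpha,N}$, $c^0_{\alpha,N}$) and the paper's route (a dichotomy on the size of $\fim^{-1}(x)|\nabla\fim(x)|\,|y-x|$ relative to $1/2$, which localises the ``bad'' region to $|y-x|\gtrsim\fim^{1/m_0}(x)$ and yields a $\fim^{-\alpha/m_0}(x)$ weight, absorbed using $\ufim\le\fim$ and $m_0\ge\alpha$) both deliver the good extra term with the sharp constant $\tfrac12$. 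So the Cauchy--Schwarz strategy per se is workable; the error is entirely in which function occupies the first argument of $\QG$.
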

\begin{proof}
We first write 
\[ \QG ((u-\fim)_+ + \fim,u)   = \Qint + \Qout\] 
with $(u-\fim)_+ +\fim = u \vee \fim$ and
\begin{align*}
\Qint = & \int_{|x-y| \le \eta} \ufim (x) (u \vee \fim (x) - u \vee \fim (y)) \DG (u(x),u(y)) \frac{\nabla \fim (x)}{\fim(x)} \cdot (y-x) \frac{c_{\alpha,N} \dx \dy }{ |y-x|^{N+\alpha}}\\
\Qout = & - \int_{|x-y| \ge \eta} \ufim (x) u \vee \fim (y) \DG (u(x),u(y)) \frac{\nabla \fim (x)}{\fim(x)} \cdot (y-x) \frac{c_{\alpha,N} \dx \dy }{ |y-x|^{N+\alpha}}
\end{align*}
for some parameter $\eta >0$ to be chosen subsequently.
Let us point out that we removed the
term $u \vee \fim (x)$ for $|x-y|\ge \eta$, since it is away from the
singularity, and that the kernel is anti-symmetric, which makes the corresponding
$\dy$ integral vanish.

\medskip
Let us observe that $\ufim$ is supported in $B_2$.
Choosing $\eta$ large enough we can ensure that for $|x-y| \ge \eta$ one has
$u \vee \fim (y) = u (y)$ and consequently
\begin{align*}
\Qout = & - \int_{|x-y| \ge \eta} \ufim (x)u (y) \DG (u(x),u(y)) \frac{\nabla \fim (x)}{\fim(x)} \cdot (y-x) \frac{c_{\alpha,N} \dx \dy }{ |y-x|^{N+\alpha}} \\
\lesssim & C_{\fim} \int \ufim (x) \fim^{-1/m_0} (x) \left\{ \int_{|y-x| \ge \eta} (1 \vee |y|)^{\eps (m-1)}  \frac{\dy }{ |y-x|^{N+\alpha-1}} \right\} \dx \\
\lesssim & C_{\fim} \int \ufim (x) \fim^{-1/m_0} (x) \dx  \leq C_{\fim} \int_{\ufim>0} \fim^{1-1/m_0} (x) \dx\\
\lesssim & C_{\fim} |\{\ufim >0\}|
\end{align*}
since $m_0 \ge 1$ and $\ufim \le \fim$. 

\medskip
As far as $\Qint$ is concerned, we revert to $u \vee \fim = (u-\fim)_+ +\fim$ and split
it as $\Qint = \Qint^{-,+} + \Qint^{-,0}$ with 
\begin{align*}
\Qint^{-,+} = & \int_{|x-y| \le \eta} \ufim (x) ((u - \fim)_+ (x) - (u - \fim)_+ (y)) \DG (u(x),u(y)) \frac{\nabla \fim (x)}{\fim(x)} \cdot (y-x) \frac{c_{\alpha,N} \dx \dy }{ |y-x|^{N+\alpha}}\\
\Qint^{-,0} = & \int_{|x-y| \le \eta} \ufim (x) (\fim (x) - \fim (y)) \DG (u(x),u(y)) \frac{\nabla \fim (x)}{\fim(x)} \cdot (y-x) \frac{c_{\alpha,N} \dx \dy }{ |y-x|^{N+\alpha}}\cdotp
\end{align*}
We split the integral further, depending on the size of the unsigned factors:
\begin{align*}
\Qint^{-,+} = & - \int_{|x-y| \le \eta} (u-\fim)_- (x) (u - \fim)_+ (y) \DG (u(x),u(y)) \frac{\nabla \fim (x)}{\fim(x)} \cdot (y-x) \frac{c_{\alpha,N} \dx \dy }{ |y-x|^{N+\alpha}}\\
= & - \int_{\underset{|x-y| \le \eta}{\fim^{-1}(x) |\nabla \fim (x)| |y-x| \le 1/2}} 
(u-\fim)_- (x) (u - \fim)_+ (y) \DG (u(x),u(y)) \frac{\nabla \fim (x)}{\fim(x)} \cdot (y-x) \frac{c_{\alpha,N} \dx \dy }{ |y-x|^{N+\alpha}}\\
  & - \int_{\underset{|x-y| \le \eta}{\fim^{-1}(x) |\nabla \fim (x)| |x-y| \ge 1/2}}  
(u-\fim)_- (x) (u - \fim)_+ (y) \DG (u(x),u(y)) \frac{\nabla \fim (x)}{\fim(x)} \cdot (y-x) \frac{c_{\alpha,N} \dx \dy }{ |y-x|^{N+\alpha}}\cdotp
\end{align*}
We then use the good extra term as follows:
\begin{align*}
\Qint^{-,+}  \le & -\frac12 \BG ((u-\fim)_+,(u-\fim)_-) \\
& + C C_{\fim} \int (u-\fim)_- (x) \fim^{-1/m_0} (x)  \left\{ \int_{\fim^{1/m_0}(x) /(2 C_{\fim}) \le |x-y| \le \eta} \frac{  \dy }{ |y-x|^{N+\alpha-1}} \right\} \dx \\
\le & -\frac12 \BG ((u-\fim)_+,(u-\fim)_-)  + C C_{\fim}\int (u-\fim)_- (x) \fim^{-\alpha /m_0} (x)  \dx. 
\end{align*}
As  $m_0\geq2>\alpha$, the last integral is related to the measure of the set $\{\ufim>0\}$ in the following way:
\[
\int_{\ufim>0} (u-\fim)_- (x) \fim^{-\alpha /m_0} (x)  \dx
\leq \int_{\ufim>0} \fim^{1-\alpha /m_0} (x)  \dx
\leq C_{\fim} |\{\ufim>0\}|.
\]

Similarly, using \eqref{e:DG}, the fact that $\supp \ufim\in B_2$
and the properties of $\fim$, we get for the last term: 
\begin{align*}
\Qint^{-,0} & \lesssim C_{\fim} \int\ufim (x) \fim^{-1/m_0} (x) \left\{\int_{|x-y| \le \eta}  (1\vee |y|)^{\eps (m-2)}   \frac{\dy }{ |y-x|^{N+\alpha-2}} \right\} \dx \\
& \lesssim C_{\fim} \int\ufim (x) \fim^{-1/m_0} (x)  \dx \lesssim C_{\fim} |\{\ufim>0\}|.
\end{align*}
This yields the desired estimate. 
\end{proof}

%
%

\begin{remark}
Note that for this second half of the proof, as $\alpha<2\leq m$, the most stringent
condition on $\epsilon$ is
\[
0<\epsilon < \min\left\{  \frac{\alpha}{m}, \frac{\alpha-1}{m-1}, \frac{\alpha-1}{m-2} \right\}
= \frac{\alpha-1}{m-1} = \epsilon_0,
\]
which is the same critical value as before.
\end{remark}

\begin{remark}
Note that the value of $\eta$ (which defines the cut-off threshold between the inner and outer regions) for the estimate of $\mathcal{E}_-$ does not have to
match the value of $\eta$ for the estimate of $\mathcal{E}_+$. This is why one can require $\eta$ to be small in the proof of lemma~\ref{l:ee3+} and large
in lemma~\ref{l:ee3-}.
\end{remark}

\subsection{Modifications in the case $\alpha\leq1$}

The previous energy estimates of the terms $Q_{\text{out}}^{+,+}$,  $Q_{\text{out}}^{+,-}$,
 $Q_{\text{out}}^{+,0}$ only work for $\alpha>1$ where the faster decay of the non-local
 kernel allows the integrals to converge.
Let us now deal with the necessary modifications of the proof in the case where $\alpha\leq1$.
Instead of considering the solution $u(t,x)$, the idea is to use a drifting change of variable:
\begin{equation}\label{uDrift1}
\bar{u}(t,x)=u(t,x-s(t))
\end{equation}
with a properly chosen drift $s:(0,T)\to\R^N$.
Such a function $\bar{u}$ solves
\begin{equation}
\partial_t \bar u = {  \nabla \cdot} \left(\bar u \nabla^{\alpha-1}G(\bar u) \right)+h, \quad t >0, \quad  x \in \mathbb{R}^N
\end{equation}
where the additional forcing term is given by:
\begin{equation}
h(t,x)=-s'(t)\cdot\nabla \bar{u} = -\nabla\cdot(v(t) \bar{u})
\end{equation}
with a drift velocity $v(t)=-s'(t)\in\R^N$.

\bigskip
Because of the forcing term, the energy estimates \eqref{e:E'start} now
contain one additional term:
\begin{align} \label{e:E'startNEW}
\frac{d}{dt} \Epm (t) &+ \BG ( (\bar{u}-\fipm)_\pm, (\bar{u}-\fipm)_\pm ) - \BG ( (\bar{u}-\fipm)_+, (\bar{u}-\fipm)_-) \\
&= - \BG (\pm (\bar{u}-\fipm)_\pm, \fipm ) + \QG (\pm (\bar{u}-\fipm)_\pm, \bar{u})
+\int  H' \left(1 \pm \frac{(\bar{u}-\fipm)_\pm}{\varphi_\pm} \right)  h \dx.
\notag
\end{align}
The general idea is that the default of compactness and lack of convergence
of the kernel in $\QG$ will ultimately be compensated by that of the additional term,
provided the drift is chosen properly. For $\alpha=1$, one will also need to pay attention
to what happens near the origin as $1/|x|^N$ fails to be integrable at both ends.

\bigskip
Proceeding by integration by part as was done~p.\pageref{IPP}, the additional term is:
\begin{align*}
\int  H' \left(1 \pm \frac{(\bar{u}-\fipm)_\pm}{\varphi_\pm} \right)  h \dx
&=-\int  H' \left(1 \pm \frac{(\bar{u}-\fipm)_\pm}{\varphi_\pm} \right)  \nabla\cdot(v(t) \bar{u}) \dx\\
& = - \int H'' \left(1 \pm \frac{(\bar{u}-\fipm)_\pm}{\varphi_\pm} \right)
\nabla \left(\frac{\pm(\bar{u}-\fipm)_\pm}{\fipm} \right) \cdot (v(t) \bar{u}) \dx\\
&=  - \int \fipm \nabla \left(\frac{\pm(\bar{u}-\fipm)_\pm}{\fipm} \right)\cdot v(t)  \dx.
\end{align*}
As $\int \nabla(\pm(\bar{u}-\fipm)_\pm)\cdot v(t)=0$, it all boils down to:
\[
\int  H' \left(1 \pm \frac{(\bar{u}-\fipm)_\pm}{\varphi_\pm} \right)  h 
=\mp \int (\bar{u}-\fipm)_\pm \frac{\nabla\varphi_\pm}{\varphi_\pm} \cdot v(t). 
\]
The hot term being $\QG (\pm (\bar{u}-\fipm)_\pm, \bar{u})$ where
\[
\QG (k,w) =
\iint k(x) (w(y)-w(x)) \DG (\bar{u}(x),\bar{u}(y)) \frac{\nabla \fipm (x)}{\fipm(x)} \cdot (y-x) \frac{c_{\alpha,N} \dx \dy }{ |y-x|^{N+\alpha}},
\]
it is therefore natural to choose the drift $s(t)$ such that
\begin{equation}\label{uDrift2}
s(0)=0 \quad\text{and}\quad
v(t) = -s'(t) = c_{\alpha,N}\int \frac{y  \chi(|y|)}{|y|^{N+\alpha}}\left( G(\bar{u}(y))-G(\bar{u}(y_0)) \right) \dy,
\end{equation}
with a smooth cut-off $\chi(r)\simeq \mathbf{1}_{r>r_0}$ supported in $[r_0,\infty)$ 
and a point $y_0$ chosen arbitrarily. Note that as $\int_{|y|=c} \frac{y  \chi(|y|)}{|y|^{N+\alpha}} \dy =0$, one can adjust the choice of $y_0$ on the fly without
changing the value of $v(t)$.  For example, choosing $y_0=x$ when $v(t)$ is paired with a function evaluated at $x$, one thus gets:
\[
\int  H' \left(1 \pm \frac{(\bar{u}-\fipm)_\pm}{\varphi_\pm} \right)  h 
=\mp \iint (\bar{u}-\fipm)_\pm(x) (\bar{u}(y) -\bar{u}(x)) \DG(\bar{u}(y),\bar{u}(x))
 \frac{\nabla\varphi_\pm(x)}{\varphi_\pm(x)} \cdot y \chi(|y|)
\frac{c_{\alpha,N}  \dx\dy}{|y|^{N+\alpha}}\cdotp
\]
For $\alpha=1$, the lack of integrability at the origin makes the cut-off absolutely
necessary. For $\alpha<1$, it is just a harmless convenience, but we can make the most of it by choosing $r_0$ properly.
The former $\QGout$ term is joined with the new term and thus replaced by:
\begin{gather*}
\QGout (k,w) = \iint_{|x-y| \ge \eta} 
 k(x) (w(y)-w(x)) \DG (\bar{u}(x),\bar{u}(y)) \frac{\nabla \fipm (x)}{\fipm(x)} \cdot (y-x) \frac{c_{\alpha,N} \dx \dy }{ |y-x|^{N+\alpha}} \\
 - \iint
k(x) (w(y) -w(x)) \DG(\bar{u}(y),\bar{u}(x))
 \frac{\nabla\varphi_\pm(x)}{\varphi_\pm(x)} \cdot y \chi(|y|)
\frac{c_{\alpha,N}  \dx\dy}{|y|^{N+\alpha}}\cdotp
\end{gather*}
Note that the second integral is a-priori computed on $\R^{2N}$ and
when $|x-y|\leq \eta$, one will not be able to bound the second integral by the other ``good'' terms as we did before.
However, if one chooses
\[
r_0> 2^{1/\epsilon}+\eta+\|s(t)\|_{L^\infty},
\]
then in the second integral, one can also assume that $|x-y|\geq \eta$ because
${k(x)=(\bar{u}-\varphi_+)_+}$ is supported in $s(t)+ B_{2^{1/\epsilon}}$, $k(x)=-(\bar{u}-\fim)_-$ is supported in $s(t)+ B_2\subset s(t)+ B_{2^{1/\epsilon}}$ and $|y|>r_0$ because of the support of $\chi$. Therefore, with that choice for $r_0$, one has:
\begin{equation}\label{newQout}
\QGout (k,w) = \iint_{|x-y| \ge \eta} 
 k(x) (w(y)-w(x)) \DG (\bar{u}(x),\bar{u}(y)) \frac{\nabla \fipm (x)}{\fipm(x)} \left(
\frac{y-x}{ |y-x|^{N+\alpha}} - \frac{y \chi(|y|)}{|y|^{N+\alpha}} \right)  c_{\alpha,N} \dx \dy.
\end{equation}

\begin{remark}
Note that the drift vector $s(t)$ given by \eqref{uDrift2} is uniformly bounded on a finite time-interval by $\|u\|_{L^\infty_t(L^{m-1}_x)}$
which, by theorem~\ref{existence}, is itself controlled by $\|u_0\|_{L^{m-1}}\leq \|u_0\|_{L^\infty}^{\frac{m-2}{m-1}} \|u_0\|_{L^1}^{\frac{1}{m-1}}$.
The finiteness of this last quantity is part of our assumptions of theorem~\ref{thm:main}. This ensures that the definitions of $r_0$ and $s(t)$ are meaningful.
\end{remark}

\bigskip\noindent
For the energy estimate on $\mathcal{E}_+$, lemmas \ref{l:ee5+}, \ref{l:ee6+} and \ref{l:ee7+} can now be replaced by the following single one.
\begin{lemma}\label{l:ee567+}
For $0<\alpha\leq 1$, $m\geq 2$, and $\epsilon<\alpha/(m-1)$,
we have:
\[ \QGout ((\bar{u}-\varphi_+)_+,\bar{u}) \le C_{\fip} |\{ (\bar{u} -\fip)>0\}|\]
with $C_{\fip} \gtrsim \|\nabla \fip / \fip\|_\infty$. 
\end{lemma}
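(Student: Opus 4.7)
The plan is to bound $\QGout((\bar u-\fip)_+,\bar u)$ given by~\eqref{newQout} by splitting the $y$-integration into a near region $\{|y|\le 2R_0\}$ and a far region $\{|y|>2R_0\}$, where $R_0$ is chosen larger than both the cut-off radius $r_0$ (so that $\chi\equiv 1$ on the far region) and the support radius $2^{1/\epsilon}+\|s(t)\|_\infty$ of $k=(\bar u-\fip)_+$. The sole obstacle compared to the treatment of lemmas~\ref{l:ee5+}, \ref{l:ee6+} and \ref{l:ee7+} is the convergence of the $y$-integral at infinity, since for $\alpha\le 1$ the naive decay $|y|^{1-N-\alpha}$ of the kernel $(y-x)/|y-x|^{N+\alpha}$ fails to be integrable there. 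The drift correction baked into the second summand of~\eqref{newQout}, whose weight $v(t)$ was chosen according to~\eqref{uDrift2} precisely to expose a cancellation with the main kernel, is designed to cure this failure.

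On the near region, intersected with the constraint $|x-y|\ge\eta$ inherited from~\eqref{newQout}, the first piece of the kernel is bounded by $\eta^{1-N-\alpha}$ and the second piece by $r_0^{1-N-\alpha}$ thanks to the support property of $\chi$. Combined with the uniform bounds $\bar u\lesssim 1$ and $\DG\lesssim 1$ obtained from~\eqref{e:DG} on the compact set $\{|y|\le 2R_0\}$, and with the fact that $k$ is bounded on its compact support, I would bound the near contribution directly by $C_\fip\int k(x)\,\dx\lesssim C_\fip\,|\{k>0\}|$.

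On the far region, $\chi(|y|)=1$, so the combined kernel simplifies to $K(x,y)=f(y-x)-f(y)$ with $f(z)=z\,|z|^{-N-\alpha}$. Since $|y|>2R_0\ge 2|x|$, a mean value argument along the segment joining $y$ to $y-x$ yields the improved bound $|K(x,y)|\lesssim |x|\,|y|^{-N-\alpha}\lesssim |y|^{-N-\alpha}$, gaining one full power of $|y|$ over the naive estimate. Using this, together with $|\bar u(y)-\bar u(x)|\lesssim (1\vee|y|)^\epsilon$ and $\DG(\bar u(x),\bar u(y))\lesssim (1\vee|y|)^{\epsilon(m-2)}$ from~\eqref{e:DG}, the integrand is bounded by $C_\fip\,k(x)\,|y|^{\epsilon(m-1)-N-\alpha}$. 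The $y$-integral at infinity converges exactly under the assumption $\epsilon(m-1)<\alpha$, and integrating in $x$ with $k$ bounded on its support of finite measure yields a far contribution also $\lesssim C_\fip\,|\{k>0\}|$.

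The hardest point is the verification of the one-power improvement $|K(x,y)|\lesssim |x|\,|y|^{-N-\alpha}$: this is the whole purpose of the drift~\eqref{uDrift2}, which was engineered so that the counter-term $y\,\chi(|y|)\,|y|^{-N-\alpha}$ cancels the leading asymptotic of $(y-x)\,|y-x|^{-N-\alpha}$ at infinity. This improvement is precisely what replaces the former restriction $\alpha>1$ of lemmas~\ref{l:ee5+}--\ref{l:ee7+} by the weaker condition $\epsilon<\alpha/(m-1)$ claimed here.
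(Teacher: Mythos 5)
Your proposal is correct and follows essentially the same route as the paper's proof: reduce to bounding the $\dy$-integral uniformly in $x\in\supp k$, using the fact that the drift counter-term $y\,\chi(|y|)\,|y|^{-N-\alpha}$ cancels the leading $|y|^{1-N-\alpha}$ tail of $(y-x)|y-x|^{-N-\alpha}$, and then conclude via $k\le 1$ on its compact support. The paper merely \emph{asserts} the convergence of the compensated $\dy$-integral under $\eps(m-1)<\alpha$ without spelling it out; your near/far split together with the mean-value estimate $|K(x,y)|\lesssim |x|\,|y|^{-N-\alpha}$ for $|y|\ge 2R_0$ makes that assertion explicit, which is a useful and sound addition rather than a different method.
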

\begin{proof}
We have already pointed out that $x\in s(t)+B_{2^{1/\epsilon}}$ in \eqref{newQout}. One therefore has:
\begin{align*}
|\bar{u}(x)-\bar{u}(y)| & \leq |\bar{u}(x)| + |\bar{u}(y)|\\
&\leq C + (1\vee |y-s(t)|)^\epsilon\\
&\leq C' (1\vee |y|)^\epsilon.
\end{align*}
Using \eqref{e:DG} and the boundedness of $\nabla \varphi_+/\varphi_+$ in \eqref{newQout}, one gets
\[
\QGout ((\bar{u}-\varphi_+)_+,\bar{u}) \lesssim C_{\fip} \iint_{|x-y|\geq \eta} (\bar{u}-\varphi_+)_+(x) \cdot (1\vee |y|)^{\epsilon(m-1)}
 \left|
\frac{y-x}{ |y-x|^{N+\alpha}} - \frac{y \chi(|y|)}{|y|^{N+\alpha}} \right|  c_{\alpha,N} \dx \dy.
\]
Thanks to the compensation, one can now compute the formerly diverging $\dy$ integral when $0<\alpha\leq1$:
\[
\sup_{x\in B_{2^{1/\epsilon}}}\int_{|x-y|\geq \eta} (1\vee |y|)^{\epsilon(m-1)}  \left|
\frac{y-x}{ |y-x|^{N+\alpha}} - \frac{y \chi(|y|)}{|y|^{N+\alpha}} \right|  \dy
\leq C
\]
provided $\epsilon(m-1)<\alpha$. One thus gets
\[
\QGout ((\bar{u}-\varphi_+)_+,\bar{u}) \lesssim C_{\fip} \int (\bar{u}-\varphi_+)_+(x) \dx,
\]
which finally leads to the announced estimate as $\bar{u}(t,x)\leq 1$ when $x\in s(t)+B_{2^{1/\epsilon}}$.
\end{proof}

\bigskip\noindent
For the energy estimate on $\mathcal{E}_-$, lemmas~\ref{l:ee2-} and \ref{l:ee3-} remain valid with the following update:
\begin{lemma}\label{l:ee23-}
For $0<\alpha\leq 1$, $m\geq 2$, and $\epsilon<\alpha/(m-1)$,
we have:
\[ \QGout (-(\bar{u}-\varphi_-)_-,\bar{u}) \le C_{\fim} |\{ (\bar{u} -\fim)_->0\}|\]
with $C_{\fim}$ as in theorem~\ref{thm:EE}.
\end{lemma}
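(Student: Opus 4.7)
The plan is to mirror the proof of lemma~\ref{l:ee567+}, taking advantage of the drift compensation built into the definition~\eqref{newQout} of $\QGout$, while carefully tracking the extra factor $\fim^{-1/m_0}$ that arises from the weaker control on $\nabla \fim/\fim$.

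First, I would start from~\eqref{newQout} with $k(x) = -(\bar u - \fim)_-(x)$ and $w = \bar u$. Since $\fim$ is supported in $B_2$, the factor $(\bar u - \fim)_-(x)$ is supported in $s(t) + B_2 \subset s(t) + B_{2^{1/\eps}}$, which is precisely the condition that was needed to guarantee $|x-y|\ge \eta$ in the new drifted integral via the choice of $r_0$. Because $x$ lies in this shifted ball and $|\bar u(x)|\le 1$ there, one has the same crude upper bound
\[
|\bar u(y)-\bar u(x)| \le C(1\vee |y|)^\eps
\]
that was used in lemma~\ref{l:ee567+}. Combined with~\eqref{e:DG}, this yields $\DG(\bar u(x),\bar u(y))|\bar u(y)-\bar u(x)| \lesssim (1\vee |y|)^{\eps(m-1)}$.

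Next, I would apply the pointwise bound $|\nabla\fim(x)/\fim(x)| \le C_{\fim}\, \fim^{-1/m_0}(x)$ instead of the simpler $L^\infty$ bound used for $\fip$. Pulling the $\dx$ integral outside, one arrives at
\[
\QGout(-(\bar u - \fim)_-,\bar u) \lesssim C_{\fim} \int (\bar u - \fim)_-(x)\, \fim^{-1/m_0}(x) \left\{\int_{|x-y|\ge \eta} (1\vee |y|)^{\eps(m-1)}\left|\frac{y-x}{|y-x|^{N+\alpha}} - \frac{y\chi(|y|)}{|y|^{N+\alpha}}\right| \dy \right\} \dx.
\]
The compensation between the two singular kernels turns what would be a divergent integral (for $\alpha \le 1$) into an integrand that decays like $|y|^{-N-\alpha}$ at infinity, so the inner integral in braces is uniformly bounded in $x\in s(t)+B_2$ provided $\eps(m-1) < \alpha$. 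This is exactly where the hypothesis $\eps < \alpha/(m-1)$ enters.

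Finally, I would conclude using the two standard bounds: $(\bar u - \fim)_-(x) \le \fim(x) \le 1$ and $m_0 \ge 2$, which give
\[
\int (\bar u - \fim)_-(x)\, \fim^{-1/m_0}(x)\dx
\le \int_{\{(\bar u - \fim)_- > 0\}} \fim^{1-1/m_0}(x)\dx
\le |\{(\bar u - \fim)_- > 0\}|.
\]
The main obstacle I anticipate is verifying that the kernel compensation survives the presence of $(1\vee|y|)^{\eps(m-1)}$: one must bound $\left|\frac{y-x}{|y-x|^{N+\alpha}} - \frac{y\chi(|y|)}{|y|^{N+\alpha}}\right|$ by $C\, |y|^{-N-\alpha}$ for $|y|$ large and by $C\, |y-x|^{-N-\alpha+1}$ for $|y|\le r_0$, and then check that the combined integral converges thanks to the strict inequality $\eps(m-1) < \alpha$. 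Once this is established, the rest is routine and the bound with $C_{\fim}$ as in theorem~\ref{thm:EE} follows directly.
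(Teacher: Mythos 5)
Your proposal is correct and follows essentially the same path as the paper's proof: mirror lemma~\ref{l:ee567+}, keep the weight $\fim^{-1/m_0}$ coming from $|\nabla\fim/\fim|\le C_{\fim}\fim^{-1/m_0}$, use the compensation in~\eqref{newQout} together with $\eps(m-1)<\alpha$ to make the $\dy$-integral converge, and finish with $(\bar u-\fim)_-\le\fim\le 1$ and $m_0\ge 2$. The paper phrases the last step via $\bufim\fim^{-1/m_0}\le \fim^{1-1/m_0}\le 1$ on the support, which is exactly your concluding chain.
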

\begin{proof}
In the following computation of \eqref{newQout}, one has this time $x\in\supp(\bar{u}-\fim)_-\subset s(t) + B_2$ and
\[
|\bar{u}(x)-\bar{u}(y)|\leq C(1\vee |y|)^\epsilon.
\]
One has therefore:
\[
\QGout (-(\bar{u}-\varphi_-)_-,\bar{u}) \lesssim C_{\fim} \iint_{|x-y|\geq \eta} \bufim(x) \fim^{-1/m_0}(x)\cdot (1\vee |y|)^{\epsilon(m-1)}
 \left|
\frac{y-x}{ |y-x|^{N+\alpha}} - \frac{y \chi(|y|)}{|y|^{N+\alpha}} \right|  c_{\alpha,N} \dx \dy.
\]
As in the proof of lemma~\ref{l:ee567+}, the $\dy$ integral is now convergent and one gets:
\[
\QGout (-(\bar{u}-\varphi_-)_-,\bar{u}) \lesssim C_{\fim} \int \bufim(x) \fim^{-1/m_0}(x) \dx.
\]
The final trick is that, on $\supp\bufim$, one has $\bufim(x)\leq \bar{u}(x)\leq \fim(x)\leq 1$ and thus
\[
\QGout (-(\bar{u}-\varphi_-)_-,\bar{u}) \lesssim C_{\fim} \int_{\bufim>0} \fim^{1-1/m_0}(x) \dx \lesssim C_{\fim} \left| \{\bufim>0\} \right|
\]
as claimed.
\end{proof}

\begin{remark}
Note that in the case $0<\alpha\leq 1$ and $m\geq 2$, the previous definition of $\epsilon_0$ is replaced by
\[
\epsilon_0 = \min\left\{  \frac{\alpha}{m}, \frac{\alpha}{m-1} \right\} = \frac{\alpha}{m}\cdotp
\]
\end{remark}

\subsection{Local energy estimates}

theorem \ref{thm:EE} provides a global estimate with an embedded cut-off function $\varphi_\pm$. In the sequel, we will need a localized version with the integrals computed on balls.

\begin{proposition}[Local energy estimates] \label{prop:EE-local} Let
  us assume that $\alpha\in (0,2)$ and $m \ge 2$. We take $\eps_0>0$
  given by theorem~\ref{thm:EE}. There then exists $C>0$ (only
  depending on $N,\alpha,m$) such that for any weak solution $u$
  of~\eqref{e:main} in $(-2,0] \times \RN$ satisfying
  \eqref{psiepsilon} for some $\eps \in (0,\eps_0)$, the two following
  local energy estimates hold true (with $u$ replaced by $\bar{u}$ if $\alpha\leq1$).
\begin{itemize}
\item For any $r<R$ in $(0,2^{1/\eps})$ and $c>1/4$ and with $-2<t_1 < t_2 <0$,
one has:
\begin{align} \label{e:EE-loc+}
\nonumber
\int_{B_r} (u(t_2,x)-c)_+^2\dx &
+ \int_{t_1}^{t_2} \left(\int_{B_{r}}(u-c)_+^p(x) \dx\right)^\frac{2}{p} \dt \\
+ \int_{t_1}^{t_2} & \iint_{B_r\times B_r} (u(t,x)-c)_+
\left(G(c)-G(u(y))\right)_+ \frac{\dx\dy}{|x-y|^{N+\alpha}} \, \dt
\nonumber  \\ 
&\lesssim \int_{B_R} (u(t_1,x)-c)_+^2 \dx
+ C (R-r)^{-2} \left|\{u > c \}\cap (t_1,t_2)\times B_{R} \right|.
\end{align}
\item For any cut-off function $\fim$ such that $\varphi_- \equiv 0$
  outside $B_2$ and $\varphi_- \equiv c>0$ in $B_r$ with
  \(|\nabla \fim /\fim | \le C_{\fim} \fim^{-1/m_0}\)
  for some $m_0 \ge 2$, we have
\begin{align}
\nonumber \int_{\RN} \left(u(t_2,x)-\fim (x) \right)_-^2\,& \fim^{-1}(x) \dx 
+ \int_{t_1}^{t_2} \left( \int_{B_{r}}(u(t,x)-c)_-^\frac{p m}{2} \dx\right)^\frac{2}{p} \dt \\
\label{e:EE-loc-}
\lesssim  \int_{\RN}  &\left(u(t_1,x)-\fim\right)_\pm^2 \fim^{-1}(x) \dx  
+ C_{\fim} \,  \left|\{(u - \fim)_- > 0 \}\cap (t_1,t_2)\times\RN \right|.
\end{align}
\end{itemize}
\end{proposition}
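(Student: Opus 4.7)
The two estimates will follow from theorem~\ref{thm:EE} with suitable choices of the cut-offs $\varphi_\pm$, after reinterpretation of the coercive term via the Sobolev embedding~\eqref{sobolev}.

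\emph{Positive part.} For~\eqref{e:EE-loc+}, take a radial $\varphi_+$ equal to $c$ on $B_r$, equal to $1$ outside $B_R$ (which matches $1+\Psi_\eps=1$ on $B_{2^{1/\eps}}$), and monotonically interpolating so that $|\nabla\varphi_+|+|\nabla\varphi_+/\varphi_+|\lesssim (R-r)^{-1}$; hence $C_{\varphi_+}\lesssim (R-r)^{-2}$. One may assume $c\le 1$, since otherwise $(u-c)_+\equiv 0$ on $B_{2^{1/\eps}}$ and the inequality is trivial. The first term of~\eqref{e:EE}, restricted to $B_r$, then yields the $L^2$-term of~\eqref{e:EE-loc+} up to an absolute constant (using $1\le \varphi_+^{-1}\le 4$). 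The coercive term of~\eqref{e:EE}, restricted to $B_r\times B_r$, gives $\iint_{B_r\times B_r}((u(y)-c)_+-(u(x)-c)_+)^2\DG(u(x),u(y))|x-y|^{-N-\alpha}\dx\dy$; since on $\{(u-c)_+>0\}$ one has $u>c>1/4$, \eqref{e:DG2} provides $\DG\gtrsim c^{m-2}\gtrsim 1$, so Sobolev~\eqref{sobolev} applied to $(u-c)_+$ on $B_r$ delivers the $L^p$-term. The ``good extra term'' of~\eqref{e:EE-loc+} comes from the third term of~\eqref{e:EE}: on $\{u(x)>c>u(y)\}$, convexity~\eqref{e:DGconvex} gives $\DG(u(x),u(y))\ge\DG(c,u(y))=(G(c)-G(u(y)))/(c-u(y))$, so $(u(x)-c)_+(u(y)-c)_-\DG\ge (u(x)-c)_+(G(c)-G(u(y)))_+$. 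The right-hand side follows from $\varphi_+^{-1}\le 4$ and $C_{\varphi_+}\lesssim (R-r)^{-2}$.

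\emph{Negative part.} For~\eqref{e:EE-loc-}, plug the given $\varphi_-$ into theorem~\ref{thm:EE}: the first term of the LHS and the measure term of the RHS transfer verbatim. The delicate point is the $L^{pm/2}$-term. Set $w:=(u-c)_-$, $\tilde u:=u\wedge c$ and $v:=c^{m/2}-\tilde u^{m/2}\ge 0$. The plan is to establish, on $B_r\times B_r$ where $\varphi_-=c$, the pointwise bound
\[
(w(y)-w(x))^2\,\DG(u(x),u(y))\;\ge\;C\,(v(y)-v(x))^2,
\]
by case analysis: on $\{u(x),u(y)\le c\}$ one has $w=c-u$ and $\tilde u=u$, so the inequality reduces to the standard PME identity $(a-b)(G(a)-G(b))\ge\tfrac{4(m-1)}{m^2}(a^{m/2}-b^{m/2})^2$; on $\{u(x)\le c\le u(y)\}$ one has $w(y)=0$ and $\tilde u(y)=c$, and~\eqref{e:DGconvex} gives $\DG(u(x),u(y))\ge\DG(u(x),c)$, which reduces matters to the same standard identity applied to the pair $(c,u(x))$; the symmetric and the $\{u(x),u(y)\ge c\}$ cases are trivial. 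Sobolev~\eqref{sobolev} applied to $v$ then controls $\left(\int_{B_r}v^p\dx\right)^{2/p}$ by the coercive term of~\eqref{e:EE}. The final step is the elementary inequality $(c-u)^{m/2}+u^{m/2}\le c^{m/2}$ on $[0,c]$ (evident at the endpoints, and in between by convexity of $t\mapsto t^{m/2}$ for $m\ge 2$), which yields the pointwise domination $(u-c)_-^{m/2}\le v$ and therefore $\int_{B_r}(u-c)_-^{pm/2}\dx\le\int_{B_r}v^p\dx$.

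\emph{Main obstacle.} The crux is the pointwise bound $(w(y)-w(x))^2\DG(u(x),u(y))\ge C(v(y)-v(x))^2$ just described. A naive attempt with $w^{m/2}$ in place of $v$ fails, because when $u(x)$ and $u(y)$ are both near~$0$ the comparison $(w^{m/2}(y)-w^{m/2}(x))^2\lesssim (w(y)-w(x))^2\DG$ can be reversed by the degeneracy of $\DG$ near $0$. Truncating $u$ at $c$ (i.e.\ passing to $\tilde u$) exactly neutralizes this obstruction. In the case $\alpha\le 1$, no further modification is needed: the drifted version of theorem~\ref{thm:EE} applies to $\bar u$ in exactly the same way, yielding the analogous local estimates.
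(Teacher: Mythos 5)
Your proof is correct and follows essentially the same path as the paper's for both estimates. For \eqref{e:EE-loc+} you take the same cut-off $\varphi_+$ (the paper writes it as $1+\Psi_\eps-(1-c)\,\xi$ with $\xi$ a plateau at height $1$ on $B_r$ supported in $B_R$), invoke \eqref{e:DG2} to get $\DG\gtrsim c^{m-2}$ on the set $\{u>c\}$ so that Sobolev yields the $L^p$-term, and use the convexity \eqref{e:DGconvex} to extract the good extra term — exactly as the paper does. For \eqref{e:EE-loc-}, your pointwise case analysis proving $(w(y)-w(x))^2\,\DG(u(x),u(y))\ge C\,(v(y)-v(x))^2$ is precisely the unpacking of the paper's chain $\DG(u(x),u(y))\ge\DG(u(x)\wedge c,\,u(y)\wedge c)$ (from \eqref{e:DGconvex}) followed by the algebraic inequality \eqref{SV}.

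The one genuine, if minor, departure is the final conversion step of the negative-part estimate, and yours is actually tidier. The paper applies Sobolev to $(u\wedge c)^{m/2}$ and then writes $(u-c)_-^m\lesssim c^m+(u\wedge c)^m$ to split the Lebesgue norm, which produces an extraneous term $c^m\,|\{u<\varphi_-\}\cap B_r|^{2/p}$ that has to be tracked separately (a fractional power of a measure, since $2/p<1$). You instead apply Sobolev to $v=c^{m/2}-(u\wedge c)^{m/2}$ — whose difference quotient coincides with that of $(u\wedge c)^{m/2}$, so the right-hand side of Sobolev is unchanged — and then use the superadditivity $(c-a)^{m/2}+a^{m/2}\le c^{m/2}$ for $0\le a\le c$ and $m\ge2$ to obtain the clean pointwise domination $(u-c)_-^{m/2}\le v$ and hence $\int_{B_r}(u-c)_-^{pm/2}\le\int_{B_r}v^p$ with no leftover term. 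This delivers exactly the stated right-hand side of \eqref{e:EE-loc-} and, as you rightly emphasize, makes transparent why one must truncate $u$ at level $c$ before taking the $m/2$-power: to escape the degeneracy of $\DG$ near zero, which would otherwise defeat a naive application of \eqref{SV} to $w=(u-c)_-$ directly.
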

\begin{remark}
In this proposition, $p=2N/(N-\alpha)$ is given by the Sobolev embedding \eqref{sobolev}.
\end{remark}
\begin{remark}
The lower bound $c>1/4$ in~\eqref{e:EE-loc+}
is a direct inheritance from the restriction $\fip>1/4$ in theorem~\ref{thm:EE},
which in turn was constrained by the range on which the $L^2$ norm is equivalent to the
alternate energy functional~\eqref{e:alternate}.
\end{remark}
\begin{remark}
Note that the $L^p$ or $L^{pm/2}$ controls of $(u-c)_\pm$ in~\eqref{e:EE-loc+}-\eqref{e:EE-loc-}
are produced by the coercive term in~\eqref{e:EE}. The good extra term
appears as the third term on the left-hand side of~\eqref{e:EE-loc+}.
The good extra term in~\eqref{e:EE-loc+}, can be replaced for $0<\tilde{c} < c$ by
\begin{align}\label{betterGET}
 (G(c) - G(\tilde{c})) \int_{t_1}^{t_2} &
 \left( \iint_{B_r \times B_r} (u-c)_+ (x)  \un_{\{u(y) \le \tilde{c} \}}  \dx \dy  \right) \dt \\
\notag
&\lesssim \int_{B_R} (u-c)_+^2(t_1,x) \dx
+ C (R-r)^{-2} \left|\{u > c \}\cap (t_1,t_2)\times B_{R} \right|.
\end{align}
\end{remark}

\begin{proof}
We first prove \eqref{e:EE-loc+}. 
We  follow \cite{CSV} by applying the energy estimates from theorem~\ref{thm:EE} with the cut-off function 
\[ \fip (x) = 1 + \Psi_\eps(x) - (1-c) \xi (x)\]
where $\xi$ is a smooth characteristic function such that $\xi\equiv1$ on $B_r$ and
$\supp\xi\subset B_R$. 
Remark that this cut-off function satisfies the assumptions of
theorem~\ref{thm:EE} with $C_{\fip} \simeq (R-r)^{-2}$. Moreover, $\fip(x) \equiv c$ for
$x \in B_{r}$. One can apply~\eqref{e:DG2} to bound $\DG$ from below on the complementary set of $\{(x,y) \,; u(x)\vee u(y)\leq c\} $ on which the following integrand vanishes anyway. Thanks to the Sobolev embedding~\eqref{sobolev}, one thus gets the following:
\begin{align*} 
\iint ((u-c)^+(y) &-(u-c)^+(x))^2 \DG (u(t,x),u(t,y)) \frac{\dx \dy }{|y-x|^{N+\alpha}} \\
& \gtrsim c^{m-2}
\iint_{B_{r} \times B_{r}} ((u-c)_+ (y) - (u-c)_+(x))^2 \frac{\dx \dy }{|y-x|^{N+\alpha}} \\
& \gtrsim  c^{m-2}\left(\int_{B_{r}}(u-c)_+^p(x) \dx\right)^\frac{2}{p}.
\end{align*}

As far as the good extra term is concerned, we use the convexity inequality
\eqref{e:DGconvex} to assert that:
\[\begin{cases}
\text{ for } u(x) \ge c, &  \DG (u(x),u(y)) \ge \DG (c,u(y)), \\
\text{ for } u(x) \ge c> \tilde{c}\ge u(y),\quad\phantom{.} & (u(y)-c)_- \DG (c,u(y)) = G(c) - G(u(y)) \ge G(c) - G (\tilde{c})
\end{cases}
\]
and in particular
\begin{align*}
\iint& (u(t,x) - \fip(x))_+ (u(t,y) - \fip(y))_- 
\DG (u(t,x),u(t,y)) \frac{\dx \dy }{|y-x|^{N+\alpha}}\\
&\gtrsim  \iint_{B_r\times B_r} (u(t,x)-c)_+
\left(G(c)-G(u(y))\right)_+ \frac{\dx\dy}{|x-y|^{N+\alpha}}
\\
&\qquad\gtrsim
(G(c) - G (\tilde{c}))
\iint_{B_r\times B_r} (u(t,x) -c)_+  \, \un_{\{u(y) \le \tilde{c} \}}
\dx \dy.
\end{align*}
For the last estimate, we discarded the denominator because $|x-y|^{-N-\alpha} \gtrsim 1$
if $x,y \in B_r$.
Applying \eqref{e:EE} from theorem~\ref{thm:EE}  then yields the first desired estimate~\eqref{e:EE-loc+}. In particular,~\eqref{betterGET} holds too.

\bigskip
We now turn to the proof of \eqref{e:EE-loc-}.
Because $m$ can be different from $2$, the dissipation term
\( \BG (\ufim,\ufim) \)
appearing in \eqref{e:EE} is treated in a slightly different way than in~\cite{CSV}.
Let us recall that $\ufim = (u-\fim)_- = u \wedge \fim - \fim$ and write
\begin{align*}
 \BG (\ufim,\ufim) & \ge \iint_{B_{r} \times B_{r}} ((u(y)-c)_-- (u(x)-c)_-)^2 \DG (u(x),u(y)) \frac{\dx \dy}{|y-x|^{N+\alpha}} \\
& \ge \iint_{B_{r} \times B_{r}} (u(y) \wedge c - u(x) \wedge c)^2 \DG (u(x),u(y)) \frac{\dx \dy}{|y-x|^{N+\alpha}}\cdotp \\
\intertext{Using the convexity inequality~\eqref{e:DGconvex}, one gets:}
 \BG (\ufim,\ufim) & \ge \iint_{B_{r} \times B_{r}} (u(y) \wedge c - u(x) \wedge c)^2 \DG (u(x)\wedge c ,u(y) \wedge c) \frac{\dx \dy}{|y-x|^{N+\alpha}} \\
& \ge \iint_{B_{r} \times B_{r}} \bigg(u(y) \wedge c - u(x) \wedge c\bigg) \bigg(G (u(x)\wedge c)-G (u(y) \wedge c) \bigg) \frac{\dx \dy}{|y-x|^{N+\alpha}} \\
& \gtrsim \iint_{B_{r} \times B_{r}} \bigg((u(y) \wedge c)^{m/2} - (u(x) \wedge c)^{m/2} \bigg)^2  \frac{\dx \dy}{|y-x|^{N+\alpha}}. 
\end{align*}
For the last inequality, we used a well-known identity~\eqref{SV}
that we recall in the appendix of this paper.
Applying the Sobolev embedding~\eqref{sobolev}, we finally get 
\[ \BG (\ufim,\ufim) \gtrsim \left( \int_{B_{r}}(u\wedge
  c)^\frac{p m}{2}(x) \dx\right)^\frac{2}{p}.\]
In particular, theorem~\ref{thm:EE} implies that for all $-2<t_1 < t_2 <0$, 
\begin{align*}
\nonumber \int_{\RN} & \left(u(t_2,x)-\fim (x) \right)_-^2\, \fim^{-1} (x) \dx  + \int_{t_1}^{t_2} \left( \int_{B_{r}}(u\wedge
  c)^\frac{p m}{2}(x) \dx\right)^\frac{2}{p} \dt \\
 \lesssim &  \int_{\RN}  \left(u(t_1)-\fim\right)_\pm^2 \fim^{-1} \dx  
+ C_{\fim} \,  \left|\{(u - \fim)_- > 0 \}\cap (t_1,t_2)\times\RN \right|.
\end{align*}
Using next that $(u-c)_-^m=(c - u \wedge c)^m \lesssim c^m + ( u \wedge c)^m$ we can play around with the Lebesgue norm:
\begin{align*}
\left(\int_{B_{r}}(u-c)_-^\frac{p m}{2}(x) \dx\right)^{\frac{2}{p}} &\lesssim
\left\| c^m\un_{u(x)<c}+(u(x)\wedge c)^m \right\|_{L^{2/p}(B_r)}\\
&\leq c^m |\{u<\fim\}\cap B_r|^{2/p}
+ \left(\int_{B_{r}}(u\wedge c)^\frac{p m}{2}(x) \dx\right)^{\frac{2}{p}}.
\end{align*}
We thus get the desired estimate \eqref{e:EE-loc-}. 
\end{proof}

\section{First lemmas of De Giorgi}\label{s:DG}

This section is devoted to the first lemmas of De Giorgi. These lemmas are
concerned with reducing the oscillation of the solution, provided $u$ spends ``most'' of
the space-time $Q=(-2,0]\times B_2$ either on the upper side or on the lower side of the a-priori range $[0,\sup_Q u]$.
Depending on wether the maximum is lowered or the infimum is increased, we get two lemmas.

\medskip
Let us define some common notations that will be used in both proofs of lemmas~\ref{l:DG-pull}
and \ref{l:DG1-low}.
For $k \in \mathbb{N},$ let us define \(T_k = -1-\frac{1}{2^k}\)
and \(r_k = 1+\frac{1}{2^k}\). One thus has an increasing sequence of times
\[-2=T_0< T_1 < T_2 < \ldots <  T_k<  \ldots < T_\infty = -1\]
and a decreasing sequence of balls:
\[
B_2=B_{r_0}\supset
B_{r_1} \supset B_{r_2} \supset \ldots \supset B_{r_k} \supset \ldots \supset B_{r_{\infty}}=B_1.
\]
The idea is to apply recursively the local energy estimates from proposition~\ref{prop:EE-local} 
with well chosen cut-off values. The sequence of nested estimates then provides, for some $c>0$,
that
\[
\int_{(-1,0]\times B_1} (u-c)_\pm^2  \dx \dt =0,
\]
which either means, depending on each respective case, that $\sup\limits_{(-1,0]\times B_1} u<c$ \quad or  $\inf\limits_{(-1,0]\times B_1} u>c$.

\subsection{Lowering the maximum}

\begin{lemma}[Lowering the maximum] \label{l:DG1-low} Let
  $\alpha \in (0,2)$.  
 For any~$\overline{\mu} \in (0,1)$, there exists $\overline{\delta} \in (0,1)$
 such that for any function $u$ that satisfies the three assumptions:
   \begin{enumerate}
  \item $u$ is locally bounded from above in the following way:
   \begin{equation}\label{e:localboundDGlow}
   \forall (t,x)\in(-2,0] \times B_{2},
   \qquad  u (t,x) \leq 1,
\end{equation}
\item the upper local energy-inequality  \eqref{e:EE-loc+} is satisfied,
\item $u$ is ``mostly'' low-valued in the sense that
\begin{equation}\label{e:mostlylow}
\left| \left\{u \leq \textstyle\frac{1+\overline{\mu}}{2} \right\} \cap (-2,0]\times B_2 \right|
\geq (1-\overline\delta) \cdot \left|(-2,0] \times B_2\right|,
\end{equation}
\end{enumerate}
then  \(u(t,x) \leq \frac{3+\overline{\mu}}{4} \text{ in } (-1,0]\times B_1.\)
\end{lemma}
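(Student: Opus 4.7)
The plan is to perform a classical De Giorgi nonlinear iteration based on the local energy estimate \eqref{e:EE-loc+}. Using the common notation of Section~\ref{s:DG}, I would shrink simultaneously the cylinders $Q_k = (T_k, 0] \times B_{r_k}$ with $T_k = -1-2^{-k}$ and $r_k = 1+2^{-k}$, and increase the truncation levels
\[
c_k = \frac{3+\overline{\mu}}{4} - \frac{1-\overline{\mu}}{4 \cdot 2^k},
\]
which satisfy $c_0 = \frac{1+\overline{\mu}}{2}$, $c_\infty = \frac{3+\overline{\mu}}{4}$, and $c_k > \frac{1}{2} > \frac{1}{4}$, so that \eqref{e:EE-loc+} applies at each step. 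The natural quantity to iterate on is
\[
U_k = \sup_{t \in [T_k,0]} \int_{B_{r_k}} (u(t,x) - c_k)_+^2 \, \dx + \int_{T_k}^0 \left(\int_{B_{r_k}} (u-c_k)_+^p\, \dx\right)^{2/p}\, \dt,
\]
where $p = \frac{2N}{N-\alpha}$ is the Sobolev exponent of~\eqref{sobolev}.

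The recursion $U_{k+1} \lesssim F(U_k)$ is obtained by applying \eqref{e:EE-loc+} with $c = c_{k+1}$, $r = r_{k+1}$, $R = r_k$, $t_2 = 0$, and an initial time $t_1 \in [T_k, T_{k+1}]$ chosen via the mean-value theorem to satisfy $\int_{B_{r_k}} (u(t_1) - c_{k+1})_+^2 \, \dx \leq 2^{k+1} \int_{Q_k} (u - c_{k+1})_+^2\, \dx\, \dt$. Since $(R-r)^{-2} \simeq 4^k$, this produces
\[
U_{k+1} \lesssim 2^{k+1} \int_{Q_k} (u - c_{k+1})_+^2\, \dx\, \dt + 4^{k+1} \left|\{u > c_{k+1}\} \cap Q_k\right|.
\]

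Both right-hand terms become nonlinear in $U_k$ via a parabolic interpolation: combining the $L^\infty_t L^2_x$ and $L^2_t L^p_x$ controls contained in $U_k$ yields the parabolic Gagliardo--Nirenberg bound
\[
\int_{Q_k}(u - c_k)_+^{q_0}\,\dx\,\dt \lesssim U_k^{q_0/2}, \qquad q_0 = 2 + \frac{2\alpha}{N}.
\]
Because $(u-c_{k+1})_+$ vanishes outside $\{u-c_k > d_k\}$ with $d_k = c_{k+1}-c_k \simeq (1-\overline{\mu})\,2^{-k}$, Chebyshev's inequality gives $|\{u > c_{k+1}\} \cap Q_k| \lesssim d_k^{-q_0} U_k^{q_0/2}$ and $\int_{Q_k} (u - c_{k+1})_+^2 \lesssim d_k^{-(q_0-2)} U_k^{q_0/2}$. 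Assembling everything yields the nonlinear recursion
\[
U_{k+1} \leq C_\star \, b^k\, U_k^{1+\gamma}, \qquad b = 2^{q_0+2}, \quad \gamma = \frac{\alpha}{N} > 0,
\]
with a constant $C_\star$ depending on $N, \alpha, m$ and on $(1-\overline{\mu})^{-1}$. The standard De Giorgi iteration lemma then ensures that, if $U_{k_0} \leq \eta_\star$ for some step $k_0$ and a small enough universal threshold $\eta_\star > 0$, then $U_k \to 0$, and monotone convergence forces $(u - c_\infty)_+ \equiv 0$ on $(-1,0] \times B_1$.

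The main obstacle is to make $U_0$ (or $U_1$) small from assumption~(3) alone, since that assumption only controls the space-time measure of $\{u > c_0\}$ but not the pointwise-in-time $\sup_t$ piece of $U_0$. I would bridge this gap with a \emph{priming step}: applying \eqref{e:EE-loc+} once on $Q_0$ with $c = c_1$, and using $u \leq 1$, $(u-c_1)_+ \leq 1$, together with $|\{u > c_0\} \cap Q_0| \leq \overline{\delta}\,|Q_0|$, one gets $U_1 \lesssim \overline{\delta}$. Choosing $\overline{\delta}$ small enough so that $U_1 \leq \eta_\star$, the nonlinear iteration then runs from $k=1$ onward and concludes the proof. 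The only non-routine bookkeeping is to track how the smallness threshold for $\overline{\delta}$ depends on $(1-\overline{\mu})^{-1}$, on the parabolic Sobolev exponent $q_0$, and on the constants $C_\star, b$ in the recursion.
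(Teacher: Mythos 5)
Your proposal is correct and follows the same De Giorgi iteration strategy as the paper: identical dyadic cylinders $Q_k=(T_k,0]\times B_{r_k}$, identical truncation levels $c_k$, the same Markov/Chebyshev plus $L^\infty_tL^2_x$--$L^2_tL^p_x$ interpolation, and the same resulting gain exponent $1+\alpha/N$ (the paper's $\sigma=2-2/p$ equals your $q_0/2$). The only difference is organizational: you fold the Sobolev dissipation term into a single quantity $U_k$ and close the loop with one parabolic Gagliardo--Nirenberg inequality, whereas the paper keeps two coupled quantities $U_k$ (the $\sup_t L^2$ piece alone) and $M_k$ (the superlevel-set measure) and must therefore apply the energy estimate a second time (its inequality~\eqref{evenmoresubtle}) to recover the $L^2_tL^p_x$ control; eliminating $U_k$ from that coupled system yields exactly your one-step recursion $M_k\lesssim\tilde C^kM_{k-1}^{\sigma}$. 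Your variant is the more textbook packaging and is slightly tidier; the paper's variant is what one falls into when the ``coercive'' Sobolev term is not stored in the iterated quantity. Your priming step ($U_1\lesssim\overline\delta$ from the mean-value argument plus $(u-c_1)_+\le\mathbb{I}_{\{u>c_0\}}$, using $u\le1$) is correct and matches the paper's bound on $M_1$; the only hypotheses to verify are $c_1>1/4$ and $r_0=2<2^{1/\epsilon_0}$, both of which hold.
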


\begin{remark}
Thanks to proposition~\ref{prop:EE-local}, weak solutions of~\eqref{e:main}
that satisfy the mild growth assumption~\eqref{psiepsilon}
will automatically satisfy the first two assumptions of lemma~\ref{l:DG1-low}.
It is interesting to point out that the PDE is not directly responsible for lemma~\ref{l:DG1-low}
 and that only the local energy inequality matters.
We do not require $u$ to be non-negative;
only~\eqref{e:localboundDGlow} is necessary.
Moreover, the ``good extra term'' in~\eqref{e:EE-loc+} is not required either.
\end{remark}
\begin{remark}
The admissible values for $\overline\delta$ form an interval $(0,\overline\delta_\ast)$
where $\overline\delta_\ast$ is an increasing function of $\overline\mu$.
\end{remark}
\begin{remark}
We will only use~lemma~\ref{l:DG1-low} in the proof of its improved version, lemma~\ref{l:DG1-low-improved}. We will need to use a high threshold value for $\overline\mu$ (\textit{i.e.} very close to 1).
\end{remark}

\begin{proof}
Let us use the common definition for $T_k$ and $r_k$ from the beginning of \S\ref{s:DG}.
We now define an increasing sequence (the fact that it is increasing is crucial)
\[
c_k =\frac{3+\overline{\mu}}{4} - \frac{1-\overline{\mu}}4 \frac1{2^k} \in \left[\frac{1+\overline\mu}{2},\frac{3+\overline\mu}{4}\right]
\]
and consider the quantity
\begin{equation}
U_k = \underset{t\in[T_k,0]}{\sup}\int_{B_{r_k}} (u-c_k)_+^2(t,x) \dx.
\end{equation}
To study the asymptotic behavior of the sequence $(U_k)_{k\in\mathbb{N}}$, we establish a
recurrence inequality.
We apply the local upper energy estimate \eqref{e:EE-loc+}
with $r=r_{k}$ and $R=r_{k-1}$ so that $(R-r)^{-2}=4^k$.
Note that $c_k\geq c_0>1/4$.
For all $T_{k-1}\leq t_1 \leq T_k < t_2 < 0$, we get:
\begin{align*}
\int_{B_{r_k}} (u-c_k)_+^2(t_2,x) \dx &+ \int_{t_1}^{t_2} \left(\int_{B_{r_k}}(u-c_k)_+^p(x) \dx\right)^\frac{2}{p} \dt   \\ 
&\lesssim \int_{B_{r_{k-1}}} (u-c_k)_+^2(t_1,x) \dx + 4^k \left|\{u > c_k\}\cap (t_1,t_2)\times B_{r_{k-1}} \right|.
\end{align*}
In particular, $U_k$ satisfies
(choose a time $t_1 $ that realizes the following infimum and $t_2$ that realizes $U_k$):
\[
U_k \leq \underset{t\in[T_{k-1},T_k]}{\inf}\int_{B_{r_{k-1}}} (u-c_k)_+^2(t,x) \dx + 4^k \int_{T_{k-1}}^0\int_{B_{r_{k-1}}} \un_{\{u(t,x)>c_k\}}\dx\dt.
\]
We remark that, by positivity of the integral:
\begin{align*}
\underset{t_1\in[T_{k-1},T_k]}{\inf}\int_{B_{r_{k-1}}} (u-c_k)_+^2(x,t_1)\dx &\leq \frac{1}{T_k-T_{k-1}} \int_{T_{k-1}}^{T_k} \int_{B_{r_{k-1}}} (u-c_k)_+^2(x,t_1)\dx\mathrm{d}t_1 \\
&\leq 2^k \int_{T_{k-1}}^{0} \int_{B_{r_{k-1}}} (u-c_k)_+^2(x,t_1)\dx\mathrm{d}t_1
\end{align*}
and as $(u-c_k)_+\leq u(x)\leq 1$ on $B_{r_{k-1}}\subset B_{2}$, it is bounded  by
the characteristic function:
\[
\underset{t_1\in[T_{k-1},T_k]}{\inf}\int_{B_{r_{k-1}}} (u-c_k)_+^2(x,t_1)\dx
\leq 2^k \int_{T_{k-1}}^{0} \int_{B_{r_{k-1}}} \un_{\{u(t,x)>c_k\}}\dx\mathrm{d}t.
\]
Let us point out that this is the only point in the proof where the local boundedness assumption~\eqref{e:localboundDGlow} will be used.
We have thus obtained that
\begin{equation}\label{e:Uktomeasure0}
U_k \lesssim 4^k \int_{T_{k-1}}^{0} \int_{B_{r_{k-1}}} \un_{\{u(t,x)>c_k\}}\dx\mathrm{d}t.
\end{equation}
Moreover, as the sequence $c_k$ is increasing, we note that
\[
(u-c_k)_+ > 0 \qquad\Longrightarrow\qquad
 (u-c_{k-1})_+ > c_k-c_{k-1} \geq 2^{-k}\left(\frac{1-\overline\mu}{4}
\right)>0,
\]
which transforms~\eqref{e:Uktomeasure0} into
\begin{equation}\label{e:Uktomeasure}
U_k \lesssim 4^k \int_{T_{k-1}}^{0} \int_{B_{r_{k-1}}} \un_{\left\{(u-c_{k-1})_+ > 2^{-k}\left(\frac{1-\overline{\mu}}{4}\right)\right\}} \dx\dt.
\end{equation}
Now we take $\theta = \frac{2}{p}$ and $q = 2(1-\theta)+p\theta$. 
Then, using the Markov and H\"older inequalities, we get
\begin{align*}
\int_{B_{r_{k-1}}} &\un_{\left\{(u-c_{k-1})_+ > 2^{-k}\left(\frac{1-\overline{\mu}}{4}\right)\right\}} \dx \\
&\leq \frac{4^q 2^{qk}}{(1-\overline{\mu})^q} \int_{B_{r_{k-1}}} (u-c_{k-1})_+^q \dx \\
&\leq \frac{4^q 2^{qk}}{(1-\overline{\mu})^q} \left(\int_{B_{r_{k-1}}}(u-c_{k-1})_+^2 \dx\right)^{1-\theta} \left(\int_{B_{r_{k-1}}}(u-c_{k-1})_+^p \dx\right)^\theta.
\end{align*}
Integrating in time $t$ along the interval $[T_{k-1},0]$, we get:
\begin{equation}
U_k \leq C^k\left(\underset{t\in[T_{k-1},0]}{\sup}\int_{B_{r_{k-1}}}(u-c_{k-1})_+^2(t)\dx\right)^{1-\theta} \left(\int_{T_{k-1}}^0 \left(\int_{B_{r_{k-1}}} ((u-c_{k-1})_+^p \dx\right)^\frac{2}{p} \dt\right).
\end{equation}
To control the last factor, we apply~\eqref{e:EE-loc+} one last time, but on the time interval
$t_1=T_{k-1}$ and $t_2=0$ and with the radii $r=r_{k-1}$ and $R=r_{k-2}$; we get:
\begin{equation}\label{evenmoresubtle}
\int_{T_{k-1}}^0 \left(\int_{B_{r_{k-1}}} ((u-c_{k-1})_+^p \dx\right)^\frac{2}{p} \dt
\lesssim  U_{k-1} + C^k  \int_{T_{k-1}}^{0} \int_{B_{r_{k-2}}} \un_{\{u(t,x)>c_{k-1}\}}\dx\mathrm{d}t.
\end{equation}
The measure term in~\eqref{evenmoresubtle} cannot be removed, but it is harmless.
Indeed, let us define
\begin{equation}
M_k = \int_{T_{k-1}}^{0} \int_{B_{r_{k-1}}} \un_{\{u(t,x)>c_k\}}\dx\mathrm{d}t.
\end{equation}
So far, thanks to \eqref{e:Uktomeasure0}-\eqref{evenmoresubtle}, we have established that
for any $k\geq1$:
\begin{equation}
\begin{cases}
U_k \leq C^k M_k,\\
M_k \leq C^k U_{k-1}^{1-\theta} (U_{k-1}+M_{k-1}).
\end{cases}
\end{equation}
Therefore, we have $M_k \leq \widetilde{C}^k M_{k-1}^\sigma$
with $\sigma=2-\theta=2-\frac{2}{p}>1$.

Solving the recurrence equation, we get constants $\bar{C}>1$ and $C'=\widetilde{C}^{-\sigma (\sigma-1)^{-2}}>0$ such that
\[
0\leq M_k \leq \bar{C}^{k} (C' M_1)^{\sigma^k} \underset{k\to\infty}{\longrightarrow} 0
\]
provided $M_1<1/C'$ is small enough. In turn, this estimate also implies that $U_k\to 0$.

\medskip
Using the last assumption \eqref{e:mostlylow} and the fact that $c_1>c_0=\frac{1+\overline\mu}{2}$,
we get the final control
\begin{equation}\label{choiceoverlinedelta}
M_1 = \int_{-2}^{0} \int_{B_{2}} \un_{\{u(t,x)>c_1\}}\dx\dt
 \leq \left|\{u>(1+\overline{\mu})/2\}\cap (-2,0)\times B_2\right| < \overline{\delta} \cdot
|(-2,0)\times B_2|,
\end{equation} 
which can be made arbitrarily small for a proper choice of $\overline\delta$.
Adjusting the value of $\overline\delta$ properly in \eqref{choiceoverlinedelta},
we obtain that $U_\infty = 0$, which in turn implies that $u\leq\overline{\mu}$ in $(-1,0)\times B_1$.
This achieves the proof of this first De Giorgi lemma about lowering the maximum.
\end{proof}

\subsection{Increasing the infimum}

\begin{lemma}[Increasing the infimum] \label{l:DG-pull} Let
  $\alpha \in (0, 2)$ and $m \ge 2$. For any~$\underline{\smash\mu} \in (0,1)$, there exists
  $\underline{\delta} > 0$ such that for any function $u$ that satisfies the three assumptions:
  \begin{enumerate}
  \item $u\geq0$ on $\RN$,
\item the lower local energy-inequality \eqref{e:EE-loc-} is satisfied (with the chosen value for $m$),
\item $u$ is ``mostly'' high-valued in the sense that
\begin{equation}\label{e:mostlyhigh}
\left| \left\{u \geq \textstyle\frac{1+2\underline{\smash\mu}}{3} \right\} \cap (-2,0]\times B_2 \right|
\geq (1-\underline\delta) \cdot \left|(-2,0] \times B_2\right|,
\end{equation}
\end{enumerate}
then \(u(t,x) \geq \underline{\smash\mu}\) in  \((-1,0]\times B_1.\)
\end{lemma}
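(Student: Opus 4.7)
The scheme runs in perfect parallel with the proof of lemma~\ref{l:DG1-low}, but dualized: we use the lower local energy estimate~\eqref{e:EE-loc-} instead of \eqref{e:EE-loc+}, track the negative part $(u-c_k)_-$ along a decreasing sequence $c_k \searrow \underline{\smash\mu}$, and produce a De~Giorgi iteration on nested cylinders. With the same $T_k = -1-2^{-k}$ and $r_k = 1+2^{-k}$ as before, set
\[
c_k = \underline{\smash\mu} + \frac{1-\underline{\smash\mu}}{3}\cdot 2^{-k},
\qquad
U_k = \sup_{t\in[T_k,0]} \int_{B_{r_k}} (u(t,x)-c_k)_-^2\,\dx,
\qquad
M_k = \int_{T_{k-1}}^0 \int_{B_{r_{k-1}}} \un_{\{u(t,x)<c_k\}}\,\dx\,\dt,
\]
so that $c_0 = (1+2\underline{\smash\mu})/3$ coincides with the threshold in assumption~\eqref{e:mostlyhigh}.

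The first technical step is to choose, at each scale $k$, an admissible cut-off $\fim = \fim^{(k)}$ for~\eqref{e:EE-loc-}. A natural choice is $\fim^{(k)}(x) = c_k\,\zeta_k(x)^{m_0}$ with $\zeta_k$ smooth, $\zeta_k \equiv 1$ on $B_{r_k}$, $\zeta_k \equiv 0$ outside $B_{r_{k-1}}$ and $|\nabla \zeta_k| \lesssim 2^k$. Since $|\nabla \fim^{(k)}|/\fim^{(k)} = m_0 |\nabla \zeta_k|/\zeta_k$ and $(\fim^{(k)})^{-1/m_0} = c_k^{-1/m_0}\zeta_k^{-1}$, the required growth condition is satisfied with $C_{\fim^{(k)}} \lesssim 2^k$. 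Plugging this $\fim^{(k)}$ into~\eqref{e:EE-loc-} and using that $u\ge 0$ forces $(u-\fim^{(k)})_-$ to be supported in $B_{r_{k-1}}$, bounded by $\fim^{(k)} \le c_k \le 1$, yields for any $T_{k-1}\le t_1 \le T_k < t_2 \le 0$:
\begin{align*}
\int_{B_{r_k}}(u(t_2)-c_k)_-^2\,\dx &+ \int_{t_1}^{t_2}\!\left(\int_{B_{r_k}}(u-c_k)_-^{pm/2}\,\dx\right)^{\!2/p}\!\dt \\
&\lesssim \int_{B_{r_{k-1}}}(u(t_1)-\fim^{(k)})_-^2 (\fim^{(k)})^{-1}\,\dx + C^k \bigl|\{u<c_k\}\cap(t_1,t_2)\times B_{r_{k-1}}\bigr|.
\end{align*}
The integral at $t_1$ is bounded by $c_k$ times a measure term; combined with the usual averaging trick over $t_1 \in [T_{k-1},T_k]$ and the bound $(u-\fim^{(k)})_-^2/\fim^{(k)} \le \fim^{(k)} \le c_k$, one obtains the analogue of~\eqref{e:Uktomeasure0}, namely $U_k \lesssim C^k M_k$.

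The second ingredient is the super-linear interpolation. Here the coercive term in~\eqref{e:EE-loc-} controls the $L^{pm/2}$ norm rather than $L^p$; since $m\ge 2$ one has $pm/2\ge p>2$, and interpolating $L^q$ between $L^2$ and $L^{pm/2}$ with $\theta=2/(pm/2)$, $q=2(1-\theta)+(pm/2)\theta$, plus Markov's inequality applied to $(u-c_{k-1})_- > c_{k-1}-c_k \gtrsim 2^{-k}(1-\underline{\smash\mu})$, produces
\[
M_k \le C^k\left(\sup_{t\in[T_{k-1},0]}\int_{B_{r_{k-1}}}(u-c_{k-1})_-^2\,\dx\right)^{\!1-\theta}\int_{T_{k-1}}^0\!\left(\int_{B_{r_{k-1}}}(u-c_{k-1})_-^{pm/2}\,\dx\right)^{\!2/p}\!\dt,
\]
and the last time integral is re-estimated by applying~\eqref{e:EE-loc-} one more time on $[T_{k-1},0]$ with radii $r_{k-1},r_{k-2}$. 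This gives the recursion $M_k \le \widetilde{C}^k M_{k-1}^\sigma$ with $\sigma = 2-\theta > 1$.

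Choosing $\underline{\delta}$ so small that assumption~\eqref{e:mostlyhigh} forces $M_1 \le \bigl|\{u<c_1\}\cap(-2,0)\times B_2\bigr| \le \underline{\delta}\,|(-2,0)\times B_2|$ to lie below the critical threshold of the super-linear recurrence, we conclude $M_k \to 0$, hence $U_\infty = 0$, which means $(u-\underline{\smash\mu})_- \equiv 0$ on $(-1,0]\times B_1$. The main obstacle is really the construction of the cut-off $\fim^{(k)}$: unlike the additive $\fip = 1+\Psi_\eps-(1-c)\xi$ of the previous lemma, here $\fim$ must vanish at the boundary while preserving the singular gradient condition $|\nabla\fim/\fim| \le C\fim^{-1/m_0}$, which forces the multiplicative ansatz $c_k\,\zeta_k^{m_0}$ and a careful accounting of the transition region where $\fim$ is between $0$ and $c_k$. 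Everything else is a straightforward dual transcription of lemma~\ref{l:DG1-low}.
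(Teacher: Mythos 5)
Your scheme is essentially the paper's, down to the choice of $c_k$, the nested cylinders $T_k,r_k$, and the two-variable system $(U_k,M_k)$ — the paper tracks $(V_k,N_k)$ with the identical super-linear recurrence. Your explicit cut-off ansatz $\fim^{(k)} = c_k\,\zeta_k^{m_0}$ is a welcome concretization of the property $|\nabla\fim/\fim|\le C^k\fim^{-1/m_0}$ that the paper merely postulates; that part is exactly right.

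Two technical points need attention. First, $\theta$ should be $2/p$, not $2/(pm/2)=4/(pm)$: the coercive term in \eqref{e:EE-loc-} gives $\int_t\big(\int (u-c_{k-1})_-^{pm/2}\big)^{2/p}\,\dt$, so matching the time-integrated interpolation factor $\int_t\big(\int f^{pm/2}\big)^{\theta}\,\dt$ against it forces $\theta=2/p$; your displayed estimate does carry $(\cdot)^{2/p}$, so this reads as a slip in the line where $\theta$ is declared (for $m>2$ the two values differ, and with $\theta=4/(pm)<2/p$ the factor would go the wrong way). Second, the paper's $V_k = \sup_t\int_{\RN}(u-\fik)_-^2\,\fik^{-1}$ carries the weight $\fik^{-1}$ over all of $\RN$ precisely because that is the form of the initial-time term delivered by \eqref{e:EE-loc-}; your unweighted $U_k = \sup_t\int_{B_{r_k}}(u-c_k)_-^2$ does not directly bound the initial datum $\int_{\RN}(u(T_{k-1})-\fikmun)_-^2\fikmun^{-1}$ appearing when you re-apply the energy estimate on $[T_{k-1},0]$, since the transition annulus $B_{r_{k-2}}\setminus B_{r_{k-1}}$ contributes. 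You can repair this either by adopting the weighted $V_k$ as in the paper, or by averaging that initial term over $t_1\in[T_{k-2},T_{k-1}]$ a second time (exactly as in the step that produced $U_k\lesssim C^kM_k$) so that the annulus contribution is again absorbed into $C^kM_{k-1}$, giving $M_k\lesssim C^k M_{k-1}^{2-\theta}$ as claimed.
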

\begin{remark}
Again, thanks to proposition~\ref{prop:EE-local}, weak solutions of~\eqref{e:main}
that satisfy the mild growth assumption~\eqref{psiepsilon}
will automatically satisfy the first two assumptions of lemma~\ref{l:DG-pull}.
In lemma~\ref{l:DG-pull}, we do not require $u$ to be bounded from above, nor to have a mild
growth at infinity. The non-negativity assumption is sufficient.
Again, no ``good extra term'' is required in~\eqref{e:EE-loc-} either.
\end{remark}
\begin{remark}
The admissible values for $\underline\delta$ form an interval $(0,\underline\delta^\ast)$
where $\underline\delta^\ast$ is an increasing function of $\underline{\smash\mu}$.
\end{remark}
\begin{remark}
We have chosen to state~\eqref{e:mostlyhigh} such that the cut-off value
$\frac{1+2\underline{\smash\mu}}{3}\in (1/3,1)$. Subsequently, we will
only use lemma~\ref{l:DG-pull} in the final proof of the main theorem, where
we intend to use it with $\frac{1+2\underline{\smash\mu}}{3}\geq\frac{1}{2}$
\textit{i.e.} for~$\underline{\smash\mu}\geq1/4$.
\end{remark}

\begin{proof}
We use the common definition for $T_k$ and $r_k$ from the beginning of \S\ref{s:DG}.
To apply  \eqref{e:EE-loc-}, the key is to choose the sequence of cut-off
functions $\fim=\fik$ wisely.
Following~\cite{CSV}, we define a decreasing sequence
\[
c_k = \underline{\smash\mu} + \frac{1-\underline{\smash\mu}}3 \frac{1}{2^{k}} \in \left[\underline{\smash\mu}, \frac{1+2\underline{\smash\mu}}{3}\right]
\]
and will choose $\fik \to \underline{\smash\mu} \un_{B_1}$ as $k \to +\infty$ while
ensuring, for all $k \ge 0$, that:
\[
\begin{cases} \fik \equiv c_k \text{ in } B_{r_k},\\
  \fik \equiv 0 \text{ outside } B_{r_{k-1}}
\end{cases}
\qquad\text{and}\qquad |\nabla \fik/ \fik| \le C^k \fik^{-1/m_0}.
\]
For $k=0$, we set $r_{-1}=3$ (note that $3\leq 2^{1/\epsilon_0}$ with
$\epsilon_0$ from theorem~\ref{thm:EE}, if $\epsilon_0\leq\frac{\log 2}{\log 3}$) so that $\varphi_0 \equiv \textstyle\frac{1+2\underline{\smash\mu}}{3}$ on~$B_2$ with compact support in $B_{3}$.
The critical properties of $\fik$ are visible in the graph below.
\begin{center}
\includegraphics[width=250pt]{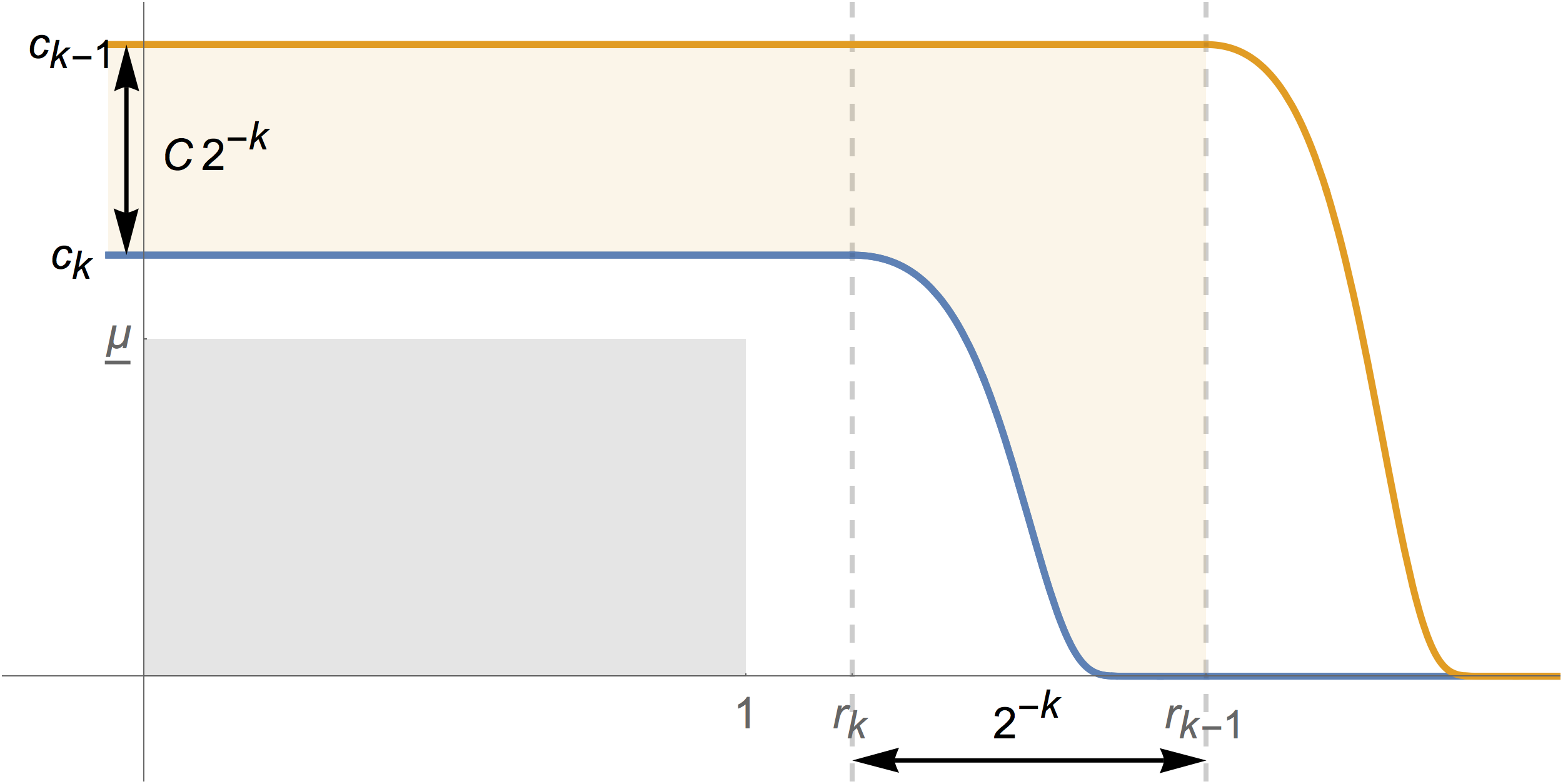}
\end{center}

\medskip
Similarly to what we did in the proof of lemma~\ref{l:DG1-low}, let us define:
\begin{equation}
V_k = \sup_{t \in [T_k,0]} \int_{\RN}  \left(u(t,x)-\fik (x) \right)_-^2\, \fik^{-1} (x) \dx.
\end{equation}
We apply the assumption \eqref{e:EE-loc-} between a starting time $t_1\in[T_{k-1},T_k]$ such that
\[
\int_{\RN}  \left(u(t_1)-\fik \right)_-^2\, \fik^{-1}
= \inf_{t\in[T_{k-1},T_k]} \int_{\RN}  \left(u(t)-\fik \right)_-^2\, \fik^{-1}
\]
and a final time $t_2\in [T_k,0]$ that realizes $V_k$.
As $u\ge0$, the function $(u-\fik)_-$ is supported in $\supp\fik\subset B_{r_{k-1}}$
and as $\fik\leq 1$, we also have $(u-\fik)_-^2 \fik^{-1} \leq \fik \leq 1$ (note that it is the only
point in the proof where we use the first assumption).
In particular, we get as in the proof of lemma~\ref{l:DG1-low}:
\begin{align*}
\inf_{t\in[T_{k-1},T_k]} \int_{\RN}  \left(u(t)-\fik \right)_-^2\, \fik^{-1}
&\leq \frac{1}{T_k-T_{k-1}}  \int^{T_k}_{T_{k-1}}\int_{\RN}  \left(u(t)-\fik \right)_-^2\, \fik^{-1} \dx\dt\\
&\leq 2^k \int_{T_{k-1}}^{0} \int_{B_{r_{k-1}}} \un_{\{u(t,x)<\fik\}}\dx\dt.
\end{align*}
The measure of $\{u<\fik\}\cap (t_1,t_2)\times\RN$ is also obviously bound by
the same right-hand side.
Thus, for this pair of times, assumption~\eqref{e:EE-loc-} implies:
\begin{equation}\label{e:Vktomeasure00}
V_k \lesssim
2^k \int_{T_{k-1}}^{0} \int_{B_{r_{k-1}}} \un_{\{u(t,x)<\fik\}}\dx\dt.
\end{equation}
As the sequence $\fik$ is decreasing both in amplitude and support in a coordinated way,
we get:
\[
\forall x\in B_{r_{k-1}}, \qquad
\fik(x) \le \fikmun(x) - \left(\frac{1-\underline{\smash\mu}}{3}\right) 2^{-k}
\]
and in particular
\[
u(t,x)<\fik \qquad\Longrightarrow\qquad (u(t,x)-\fikmun)_- > \left(\frac{1-\underline{\smash\mu}}{3}\right)2^{-k}.
\]
We are thus allowed to rewrite \eqref{e:Vktomeasure00} into
\begin{equation}\label{e:Uktomeasure11}
V_k \lesssim C^k \int_{T_{k-1}}^{0} \int_{B_{r_{k-1}}} \un_{\{(u - \fikmun)_- > (1-\underline{\smash\mu})2^{-k}/3  \}} \dx\dt.
\end{equation}
Now we take $\theta = \frac{2}{p}$ and $q = 2 (1-\theta)+\theta (pm/2)$ and apply
the Markov inequality to~\eqref{e:Uktomeasure11}, then the H\"older inequality
in the space variable and  subsequently integrate in time; we get
\begin{align*}
V_k & \le C^k \int_{T_{k-1}}^0 \int_{B_{r_{k-1}}} (u-\fikmun)_-^q  \dx \dt \\
& \le C^k \left(\sup_{t\in [T_{k-1},0]} \int_{B_{r_{k-1}}} (u-\fikmun)_-^2 \fikmun^{-1} \dx \right)^{1-\theta}
\left( \int_{T_{k-1}}^0 
\left( \int_{B_{r_{k-1}}} (u-\fikmun)_-^{\frac{pm}2}  \dx  \right)^{\frac{2}{p}}
\dt \right).
\end{align*}
Note that $\fikmun^{-1} \equiv c_{k-1}^{-1}\geq 1$ on $B_{r_{k-1}}$ so we can add it freely at the end of the computation.
Finally, let us apply~\eqref{e:EE-loc-} one more time, but between $t_1=T_{k-1}$ and $t_2=0$
and with the truncation $\fikmun$. We then get:
\begin{equation}\label{e:subt}
  \int_{T_{k-1}}^{0} \left( \int_{B_{r_{k-1}}}(u-  c_{k-1})_-^\frac{p m}{2} \dx\right)^\frac{2}{p} \dt \lesssim
  V_{k-1} + 
  C^k \int_{T_{k-1}}^{0} \int_{B_{r_{k-2}}} \un_{\{u(t,x)<\fikmun\}}\dx\dt.
\end{equation}
Roughly speaking, if we discard the measure term,
the flavor of this recurrence equation is  $V_k \le C^k V_{k-1}^{1-\theta} V_{k-1}$.
However, as there is no hope to control $\un_{\{u(t,x)<\fikmun\}}$ by $(u-\fikmun)_-$, we have to
consider the recurrence equation as a system. For this purpose, let us define
\begin{equation}
N_k=\int_{T_{k-1}}^{0} \int_{B_{r_{k-1}}} \un_{\{u(t,x)<\fik\}}\dx\dt.
\end{equation}
What we have proved so far with~\eqref{e:Vktomeasure00}-\eqref{e:subt} is the existence of a universal constant $C$ such that:
\begin{equation}
\begin{cases}
V_k \leq C^k N_k\\
N_k \leq C^k V_{k-1}^{1-\theta} (V_{k-1}+N_{k-1}).
\end{cases}
\end{equation}
From this system, we can infer that $N_k \leq \widetilde{C}^k N_{k-1}^{2-\theta}$.
Provided that $N_1$ is small enough, we will then get, as in the proof of lemma~\ref{l:DG1-low}, that $N_k\to 0$ super-exponentially fast as $k\to \infty$ (namely  $N_k\leq  \bar{C}^{k} (C' N_1)^{\sigma^k}$ with $\sigma=2-\theta>1$ so $\sigma^k \gg k$, and $C'N_1<1$) and therefore $V_k\to0$ too.

\medskip
Let us check that $N_1$ is indeed small enough.
As $\varphi_1\leq \varphi_0=\frac{1+2\underline{\smash\mu}}{3}$ on $B_2$,
our assumption \eqref{e:mostlyhigh} allows us to write
\begin{equation}\label{choiceunderlinedelta}
N_1 = \int_{-2}^0 \int_{B_2} \un_{\{u<\varphi_1\}} \dx\dt
\leq 
 \left|\{u<(1+2\underline{\smash\mu})/3\}\cap (-2,0)\times B_2\right| < \underline{\delta} \cdot
|(-2,0)\times B_2|
\end{equation} 
so it can be made arbitrarily small for a proper choice of $\underline\delta$.
This achieves the proof of the De Giorgi lemma about increasing the infimum.
\end{proof}

\section{Lemma on intermediate values}

To prove the Hölder regularity of the weak solution, we need to
improve lemma \ref{l:DG1-low} by showing that
a uniform reduction of the maximum on a smaller ball
can be obtained not only if $u$ is below 1/2 for most
of the space-time domain $(-2,0]\times B_2$ but 
that it is also true under the milder assumption that it happens for only a few
events~$(t,x) \in (-2,0]\times B_2$. 

\begin{remark}
This type of result on intermediate values is sometimes
called a ``second De Giorgi lemma'' (e.g. in~\cite{CCV})
in reference to the historical papers of E.~De Giorgi on elliptic PDEs.
As we have already established two De Giorgi lemmas of the first kind,
it would probably be more proper to call it ``a De Giorgi lemma of the second kind''.
\end{remark}

\begin{lemma}[Intermediate values, or De Giorgi lemma of the second kind]
  \label{lvi} Let $\alpha \in (0,2)$.
 For any $\rho \in (0,\frac{1}{8})$ and $\delta\in (0,\frac12)$,
 there exist $\lambda\in(0,\frac{1}{2})$ and
 $\gamma\in(0,1)$ such that
 for any function $u$ that satisfies the following assumptions:
\begin{enumerate}
\item $u(t,x) \leq 1$ in $(-2,0]\times B_2$,
\item the upper local energy-inequality  \eqref{e:EE-loc+} is satisfied,
\item $u$ takes ``some'' early low values in the sense that
\[
\left|\left\{u <{\textstyle\frac12} \right\}\cap (-2,-1]\times B_1\right| \geq \rho\vert B_1\vert,
\]
\item $u$ takes ``enough'' late high values in the sense that
\[
\left|\left\{u > 1 -\lambda^2/2\right\}\cap (-1,0]\times B_1\right| \geq \delta\vert B_1\vert,
\]
\end{enumerate}
then
\begin{equation} \label{intermediate set}
\left|\left\{1/2 \leq u\leq 1 -\lambda^2/2 \right\}\cap((-2,0]\times B_1)\right| \geq
\gamma \,\vert (-2,0] \times B_1\vert.
\end{equation}
\end{lemma}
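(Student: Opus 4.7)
The argument proceeds by contradiction. Assume that
\[
|\{1/2 \leq u \leq 1-\lambda^2/2\}\cap(-2,0]\times B_1| < \gamma \,|(-2,0]\times B_1|
\]
with $\gamma$ a small constant to be chosen last. The central tool is the refined good-extra term~\eqref{betterGET} in the local upper energy estimate, applied on $(-2,0]\times B_1$ (with $R=2$, $r=1$) at the levels $c=1-\lambda$ and $\tilde{c}=1/2$. For $\lambda\leq 1/4$ one has $G(1-\lambda)-G(1/2)\geq \kappa_m>0$, and since $u\leq 1$ by assumption~1 the initial energy and the measure term on the right-hand side of~\eqref{betterGET} are both bounded by universal constants. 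One thus deduces a \emph{master inequality}
\[
\int_{-2}^{0}\iint_{B_1\times B_1} (u-(1-\lambda))_+(x)\,\un_{\{u(t,y)\leq 1/2\}}\dx\dy\,\dt \leq C.
\]
Because $(u-(1-\lambda))_+\geq \lambda/2$ wherever $u>1-\lambda^2/2$, setting $f(t)=|\{u(t,\cdot)>1-\lambda^2/2\}\cap B_1|$ and $g(t)=|\{u(t,\cdot)\leq 1/2\}\cap B_1|$, this reads
\[
\lambda\int_{-2}^{0} f(t)\,g(t)\,\dt \leq C'.
\]

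The next step is to produce a matching \emph{lower} bound on $\int fg\,\dt$. Chebyshev's inequality applied to assumption~3 (resp.~4) yields a time set $\mathcal{T}_\ell\subset(-2,-1]$ (resp.~$\mathcal{T}_h\subset(-1,0]$) of measure $\gtrsim \rho$ (resp.~$\gtrsim \delta$) on which $g(t)\gtrsim \rho|B_1|$ (resp.~$f(t)\gtrsim \delta|B_1|$). Under the contradiction hypothesis, the two ``large'' sets $\{u(t,\cdot)\leq 1/2\}$ and $\{u(t,\cdot)>1-\lambda^2/2\}$ together cover $B_1$ up to a small error $\eta(\gamma)|B_1|$ at almost every time. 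To merge $\mathcal{T}_\ell$ and $\mathcal{T}_h$ into a common window on which both $f$ and $g$ are simultaneously large, I would reapply the energy inequality (either~\eqref{e:EE-loc+} or~\eqref{e:EE-loc-}) to transport the ``mostly low'' information from $\mathcal{T}_\ell$ forward past $t=-1$, or symmetrically to transport the ``mostly high'' information backwards: on a subinterval of positive length one then has $f(t),g(t)\gtrsim |B_1|$ simultaneously, so that $\int fg \gtrsim \rho\delta$, which contradicts the master inequality once $\lambda$ is chosen small enough relative to $\rho\delta$.

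The main obstacle is precisely this transport-in-time step: \emph{a priori}, nothing prevents $u$ from switching regimes abruptly near $t=-1$, leaving $\mathcal{T}_\ell$ and $\mathcal{T}_h$ temporally disjoint. The contradiction hypothesis is what rules this out: a sharp temporal transition combined with a small intermediate set would be detected by the coercive $H^{\alpha/2}$-type term on the left-hand side of~\eqref{e:EE-loc+} (via the Sobolev embedding~\eqref{sobolev}), which penalizes jumps in $u$ from below $1/2$ to above $1-\lambda^2/2$ across a set of small measure. Parameters are fixed in the order $\lambda=\lambda(\rho,\delta)$ first, then $\gamma=\gamma(\lambda,\rho,\delta)$, so that the small-intermediate-set assumption indeed forces $f+g\approx|B_1|$ at nearly every time and the transport step closes the contradiction.
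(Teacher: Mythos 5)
Your proposal correctly identifies both structural tools --- the good extra term \eqref{betterGET}, which gives a product-type bound involving the low- and high-level sets, and some form of time-transport to bridge the gap between assumption~3 (on $(-2,-1]$) and assumption~4 (on $(-1,0]$). But there is a genuine gap at precisely the step you flag as the ``main obstacle''. You invoke the coercive $H^{\alpha/2}$ term of \eqref{e:EE-loc+} to ``penalize jumps in $u$ from below $1/2$ to above $1-\lambda^2/2$ across a set of small measure'', but that term is a \emph{spatial} seminorm: it penalizes oscillation in $x$ at a fixed time, not abrupt transitions in $t$. It therefore cannot, by itself, rule out the scenario where $g(t)=|\{u(t,\cdot)\le 1/2\}\cap B_1|$ vanishes on all of $(-1,0]$ while $f(t)=|\{u(t,\cdot)>1-\lambda^2/2\}\cap B_1|$ vanishes on all of $(-2,-1]$; in that case $\int_{-2}^0 f(t)g(t)\,\dt=0$, the master inequality is vacuous, and no contradiction is reached. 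No quantitative mechanism is supplied that forces $f$ and $g$ to be simultaneously large on a set of times of positive measure, and this is the entire difficulty of the lemma, not a technical detail to be deferred.

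The paper's proof circumvents this by tracking the \emph{scalar energy} $\mathcal{E}_+(t)=\int_{B_1}(u-c_1)_+^2(t,x)\,\dx$ (with $c_1=1-\lambda/2$) rather than the level-set measures $f,g$ of $u$. What actually transports information across $t=-1$ is the time-derivative part of \eqref{e:EE-loc+}, namely the quasi-Lipschitz bound $\mathcal{E}_+'(t)\le C_+\lambda^2$ established in Step~1 --- not the $H^{\alpha/2}$ coercive term. The good extra term shows $\mathcal{E}_+\lesssim\delta\lambda^2$ on a positive-measure set of early times $\tilde{\Sigma}_0\subset(-2,-1]$ (Steps~2--3); the high-values assumption shows $\mathcal{E}_+\gtrsim\delta\lambda^2$ on a positive-measure set of late times $\Sigma_2\subset(-1,0]$ (Step~4); the Lipschitz bound then forces $\mathcal{E}_+$ to spend a time $|D|\gtrsim\delta$ at intermediate energy levels (Step~5). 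At times in $D$ that lie neither in $\Sigma_0$ nor in $\Sigma_2$, the low and the high level set of $u$ are both small in $x$, so the intermediate level set occupies at least half of $B_1$ (Step~6). This yields \eqref{intermediate set} \emph{directly}, with no contradiction argument. The difference is not just one of framing: the crucial mechanism is the time-continuity of the single real-valued function $\mathcal{E}_+(t)$, which your proposal does not identify or exploit.
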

\begin{remark}
  In view of the proof, a formula can be given for $\gamma$ as a
  function of $\rho, \delta$ and constants only depending on $N,m$ and
  $\alpha$ (see $C_+,C_1,C_2, C_D$ in the proof below). 
  The admissible values for $\lambda$ form an interval of the form$(0,C\rho\delta^2)$ defined
  precisely by  \eqref{cond on lamda}.
 \end{remark}
\begin{remark}
Subsequently, we will use this result with some $\delta=\overline\delta$
 given by lemma \ref{l:DG1-low}.
 \end{remark}
\begin{proof} 
We will follow closely the proofs given in Section 4 of \cite{CCV} and in Section 9 of \cite{CSV}.
As pointed out in \cite{CSV}, the key point is to collect
a super-linear control of the good extra term
\[
\int_{-2}^0 \iint_{B_1 \times B_1} \left(u(t,x)-(1-{\textstyle\frac\lambda2})\right)_+ \left(G(1-{\textstyle\frac\lambda2}) -G(u(y))\right)_+ \frac{\dx\dy}{|x-y|^{N+\alpha}}  \dt
\lesssim \lambda^{1+\varepsilon}
\]
for $\lambda<1/2$ and with some $\varepsilon>1$. In what follows (as in~\cite{CCV}, \cite{CSV}), we will have $\varepsilon+1=2$.
Once this goal has been achieved, then the subsequent steps are a straightforward adaptation of the end of Section 4 of \cite{CCV}.
For the convenience of the reader, we will sketch how the end of the argument goes.

\bigskip
We define for $\lambda < 1/2$,
\begin{eqnarray*}
c_0 
= \frac{1}{2}, \quad c_1 = 1-\frac\lambda2, \quad c_2 = 1-\frac{\lambda^2}2.
\end{eqnarray*}
We fix $\rho \in (0,1/8)$ and we consider
\begin{align*}
&\Ep(t) = \int_{B_1} (u-c_1)_+^2(t,x) \dx, \\
&\get(t) = \iint_{B_1 \times B_1} (u-c_1)_+(t,x) (G(c_1) -G(u(y)))_+ \dx\dy.
\end{align*}
The proof proceeds in several steps.
During the proof, we will freely use  that, on $B_2$:
\begin{equation}\label{uc1trick}
(u-c_1)_+ \leq  \frac{\lambda}{2} \cdot \un_{\{u(x)>c_1\}}.
\end{equation}

\medskip
\noindent \textbf{Step 1:} Using the energy estimate, we first prove in this step that 
\begin{align} \label{G(t)}
\Ep'(t) \leq C_+ \lambda^2 \quad \text{ and } \quad \int_{T_1}^{T_2} \get(t)\dt \leq C_+ \lambda^2
\end{align}
for all $-2 \leq T_1 < T_2 < 0$.
For any $c_0\in(0,c_1)$, 
we can express our assumption \eqref{e:EE-loc+} about the local energy estimate
using its alternate form~\eqref{betterGET} and obtain:
\begin{align*}
\frac{d}{dt} \int_{B_1} (u-c_1)_+^2(t_2,x) \dx & + (G(c_1)-G(c_0)) \left( \int_{B_1 \times B_1} (u-c_1)_+ (x) \un_{\{u(y) < c_0\}}\dx \dy  \right)  \\ 
&\lesssim  C  \left|\{u > c_1 \}\cap (t_1,t_2)\times B_{3/2} \right|  
\lesssim C  \lambda^2.
\end{align*}
For the last inequality, we followed~\cite{CCV}-\cite{CSV} and used~\eqref{uc1trick}.

\medskip\noindent
\textbf{Step 2:} We construct a set of ``early times'' for which
the energy is ``small''.  More precisely, in order to do this, we consider
\[
\Sigma_0 = \{t \in (-2,0): \; |\{u(t,.) < c_0\}\cap B_1|\geq \rho |B_1|/4 \}
\]
and we prove next that
\begin{align}
\label{measure}
\vert\Sigma_0\cap (-2,-1)\vert \geq \frac{\rho}{2}, \\
\label{small-energy}
\int_{\Sigma_0} \Ep(t) \dt \leq C_1 \frac{\lambda^3}{\rho}.
\end{align}
As far as \eqref{measure} is concerned, we remark that the assumptions of the lemma imply
\begin{align*}
\rho\vert B_1\vert &\leq \int_{-2}^{-1}\vert\{u(t) < c_0\}\cap B_1\vert \dt \\
&\leq \int_{\Sigma_0\cap(-2,-1)}\vert\{u(t) < c_0\}\cap B_1\vert \dt + \int_{(-2,-1)\setminus\Sigma_0}\frac{\rho}{4}\vert B_1\vert \dt \\
&\leq \vert B_1\vert\vert \Sigma_0\cap(-2,-1)\vert + \frac{\rho}{2}\vert B_1\vert.
\end{align*}
In order to get \eqref{small-energy}, we first remark that \eqref{G(t)} yields
\[
C \lambda^2 \geq  (G(c_1)-G(c_0)) \int_{\Sigma_0} \int_{B_1} \int_{\{u(t,y) < c_0\}\cap B_1}  (u-c_1)_+(t,x) \dx\dy\dt.
\]
Now we use
\( G(c_1) - G(c_0) =
\left(1-\frac{1}{2}\lambda\right)^{m-1}-\left(\frac{1}{2}\right)^{m-1}
\geq \left(\frac{5}{6}\right)^{m-1}- \left(\frac{1}{2}\right)^{m-1}
\gtrsim 1 \) and get
\begin{eqnarray*}
C \lambda^2 &\gtrsim & \rho \int_{\Sigma_0}\int_{B_1} (u-c_1)_+(t,x)\dx\dt \\
&\gtrsim &  \frac{\rho}{\lambda}\int_{\Sigma_0}\int_{B_1} (u-c_1)_+^2(t,x) \dx\dt
\end{eqnarray*}
using~\eqref{uc1trick} again.

\medskip\noindent 
\textbf{Step 3:} We now consider the following set of ``early times''
for which the energy is small:
\[
\tilde{\Sigma_0} = \{t \in \Sigma_0\cap(-2,-1) : \;  \Ep (t)\leq \frac{C_2}2 \delta \lambda^2 \}
\]
with $C_2$ to be chosen later, and we prove that it has a positive measure, \textit{i.e.}
\begin{equation} \label{measure1}
\vert\tilde{\Sigma_0}\vert \geq \frac{\rho}{4}
\end{equation}
for $\lambda$ small enough. Let $F$ denote  $\Sigma_0\setminus\tilde{\Sigma_0}$. Using \eqref{small-energy} we can write
\[
\vert F\vert = \int_F\dt \leq \frac{2}{C_2 \lambda^2\delta} \int_F \Ep(t) \dt \le \frac{2 C_1 \lambda}{C_2\delta\rho} \le \frac{\rho}2
\]
as soon as
\[
\lambda \leq \frac{C_2 \delta \rho^2}{4C_1}
\]
and we get \eqref{measure1} from \eqref{measure}.

\medskip\noindent
\textbf{Step 4:} We next construct a set of ``late times'' for which
the energy is ``large''. Precisely, we consider
\begin{align*}
\Sigma_2 = \{t\in (-2,0); \vert\{u(t)>c_2\}\cap B_1\vert\geq\frac{\delta}{4}\vert B_1\vert\}
\end{align*}
and we prove that
\begin{align}
\label{measure2}
\vert\Sigma_2\cap(-1,0)\vert \geq \frac{\delta}{2} \\
\label{large-energy}
\forall t \in \Sigma_2, \qquad \Ep (t) \geq C_2 \delta \lambda^2
\end{align}
for $C_2 = |B_1|/64$. 
Estimate~\eqref{measure2} is obtained as above from the assumption of
the lemma. As far as \eqref{large-energy} is concerned, we write for all
$t\in \Sigma_2$ that
\begin{align*} 
\Ep(t) &= \int_{B_1} (u-c_1)_+^2(t,x) \dx \\
 &\geq (c_2-c_1)^2 |B_1 \cap \{\{u(t,x) > c_2\}| \\
&\geq \frac{\lambda^2 (1-\lambda)^2\delta |B_1|}{16}  \\
&\geq \frac{|B_1|}{64} \delta \lambda^2
\end{align*}
for $\lambda \le \frac12$.

\bigskip
\noindent 
\textbf{Step 5:} In this step, we prove that the energy $\Ep$ takes
intermediate values between $C_2 \delta \lambda^2/2$ and
$C_2 \delta \lambda^2$ ``often enough''. Precisely, we consder
\[
D = \{t\in (-2,0); \frac{C_2}2 \lambda^2\delta \leq \Ep(t) \leq C_2 \lambda^2\delta\}
\]
and we prove that 
\begin{align}
\label{measureD}
\vert D\vert \geq \delta C_D \\
 \label{measure3}
\vert D\setminus \Sigma_0 \vert \geq \frac{\vert D\vert}{2}
\end{align}
with $C_D=|C_2|/(2C_+)$.  We start with
\eqref{measureD} by picking a time $T_0 \in \tilde{\Sigma}_0$ where
$\tilde{\Sigma}_0$ (it has a positive measure thanks to \eqref{measure1}) and
$T_2 \in \Sigma_2 \cap (-1,0)$ (it has a positive measure thanks to
\eqref{measure2}).  Consider the truncature function
\[ \mathcal{T} (r) = \max (\min (r,C_2 \delta \lambda^2),C_2 \delta \lambda^2 /2).\] 
Remark that $\mathcal{T}'(r) = \un_{\{ C_2 \delta \lambda^2 /2 \le r \le C_2 \delta \lambda^2 \}}$. 
Then 
\begin{align*}
 \frac{C_2}2 \delta \lambda^2 & \le \mathcal{T} \circ E (T_2) - \mathcal{T} \circ E(T_0 ) \\
& \le \int_{T_0}^{T_2} E'(t) \mathcal{T}' (E(t)) \dt \\
& \le C_+ \lambda^2 |D|
\end{align*}
where we used \eqref{e:EE-loc+}.

\smallskip
As far as \eqref{measure3} is concerned, we use the definition of
$\sigma_2$, \eqref{small-energy} and \eqref{measure3} in order to get
\[
\vert D\cap \Sigma_0 \vert\frac{C_2 \lambda^2\delta}{2} \leq \int_{D\cap \Sigma_0}E(t)\dt
\leq  C_1 \frac{\lambda^3}{\rho} \leq  \frac{C_D \delta}{2} \times \frac{C_2 \lambda^2\delta}{2} 
\leq  \frac{\vert D\vert}{2}\frac{C_2 \lambda^2\delta}{2},
\]
as soon as
\begin{align} \label{cond on lamda}
0<\lambda \leq \frac{C_2 C_D \rho \delta^2}{4C_1}\cdotp
\end{align}

\medskip\noindent
\textbf{Step 6:} We will pick up an intermediate set in $D\setminus\tilde{\Sigma_0}$ with a nontrivial measure. Precisely, 
for $t\in (-2,0)\setminus(\tilde{\Sigma_0}\cup\Sigma_2)$, we have (recall $\delta \le \frac12$ and $\rho \le \frac12$)
\begin{align*}
\vert B_1\vert &= \vert\{u(t) < c_0\}\cap B_1\vert + \vert\{u(t) > c_2\}\cap B_1\vert + \vert\{c_0 < u(t) < c_2\}\cap B_1\vert \\
&\leq \frac{\rho}{2}\vert B_1\vert + \frac{\delta}{2}\vert B_1\vert + \vert\{c_0 < u(t) < c_2\}\cap B_1\vert \\
&\leq \frac{1}{2}\vert B_1\vert + \vert\{c_0 < u(t) < c_2\}\cap B_1\vert.
\end{align*}
Hence for all $t\in (-2,0)\setminus(\tilde{\Sigma_0}\cup\Sigma_2)$ we
have
\begin{align*}
\vert\{c_0 < u(t) < c_2\}\cap B_1\vert \geq \frac{1}{2}\vert B_1\vert.
\end{align*}
Moreover $D\setminus\tilde{\Sigma_0}\subset (-2,0)\setminus(\tilde{\Sigma_0}\cup\Sigma_2)$. So we conclude
\begin{align*}
\int_{-2}^0 \vert\{c_0 < u(t) < c_2\}\cap B_1\vert \dt &\geq \int_{(-2,0)\setminus(\tilde{\Sigma_0}\cup\Sigma_2)} \vert\{c_0 < u(t) < c_2\}\cap B_1\vert \dt \\
&\geq \frac{1}{2}\vert B_1\vert \vert(-2,0)\setminus(\tilde{\Sigma_0}\cup\Sigma_2)\vert \\
&\geq \frac{1}{2}\vert B_1\vert \vert D\setminus\tilde{\Sigma_0}\vert \\
&\geq \frac{1}{2}\vert B_1\vert.\frac{\vert D\vert}{2} \\
&\geq \frac{C_D \delta }{4} \vert B_1\vert.
\end{align*}
Hence the lemma is proved with $\gamma = \frac{C_D \delta}{4}$.
\end{proof}

\section{Lowering the maximum, improved}

We are now in a position to prove the improved oscillation reduction
result from above. We follow the argument given in Section 10 of
\cite{CSV}. The key will be a proper rescaling of the solution.
\begin{lemma}[Lowering the maximum, improved]
\label{l:DG1-low-improved} Let $\alpha \in (0,2)$.
We take $\epsilon_0$ from theorem~\ref{thm:EE}.
For any $\mu\in(0,1/2]$ and $\rho\in(0,1)$, there exists $\mu^* \in (0,1)$
such that for any function $u$ that satisfies the following assumptions:
\begin{enumerate}
\item $u$ satisfies
\[
 u(t,x) \leq 1 \quad \text{ in } (-2,0]\times B_2
\]
\item the upper local energy-inequality  \eqref{e:EE-loc+} is satisfied,
\item $u$ takes ``some'' early low values in the sense that
\[
\left|\{u < \mu \}\cap (-2,-1]\times B_2 \right| \ge  \rho \, |B_2|,
\]
\end{enumerate}
then \( u(t,x) \leq \mu^* \) in $(-1/2,0]\times B_{1/2}$. 
Note that  the value of $\mu^\ast$ depends only on the dimension $N$, on $\gamma, \lambda$ from lemma~\ref{lvi}, on $\rho, \mu$ and $\epsilon_0$.
\end{lemma}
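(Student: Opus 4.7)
The plan is to bootstrap lemma~\ref{l:DG1-low} via a finite pigeonhole iteration of the intermediate-values lemma~\ref{lvi}. The heuristic is that the hypothesis provides a substantial ``low values'' set at early times, and the only obstruction to directly applying lemma~\ref{l:DG1-low} is that the ``high values'' set could be too large; but any such high-values set forces, via lemma~\ref{lvi}, a substantial intermediate layer, and one cannot fit more than finitely many disjoint such layers inside the bounded space-time cylinder $Q=(-2,0]\times B_2$. I would begin by fixing a target threshold $\overline\mu_0\in(3/4,1)$ and letting $\overline\delta_0 = \overline\delta(\overline\mu_0)$ be the parameter furnished by lemma~\ref{l:DG1-low}. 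I then apply lemma~\ref{lvi} with $\delta=\overline\delta_0$ and with the $\rho$ from the hypothesis to obtain $\lambda_0\in(0,1/2)$ and $\gamma_0\in(0,1)$, and I let $K$ be any integer strictly greater than $|Q|/(\gamma_0|B_1|)$.

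I introduce the sequence of cut-off levels $c_k = 1-\lambda_0^{2k}/2$, strictly increasing from $c_0=1/2$ toward $1$, and at each level $k\ge 1$ the affinely rescaled function $v_k(t,x)=(u(t,x)-c_{k-1})/(1-c_{k-1})$. By construction $v_k\le 1$ on $Q$, and $v_k<1/2$ as soon as $u<(1+c_{k-1})/2$, which in particular is implied by $u<\mu<1/2\le c_{k-1}$; hence the early low-values assumption survives the rescaling with the same measure $\rho|B_2|$. At level $k$ a dichotomy appears: either the high-values set $\{v_k>1-\lambda_0^2/2\}$ has measure at most $\overline\delta_0|Q|$, in which case lemma~\ref{l:DG1-low} applied to $v_k$ yields $v_k\le(3+\overline\mu_0)/4$ on $(-1,0]\times B_1$, which translates back to $u\le c_{k-1}+(1-c_{k-1})(3+\overline\mu_0)/4=:\tilde\mu_k<1$; or this measure exceeds $\overline\delta_0|Q|$, in which case lemma~\ref{lvi} produces an intermediate layer for $v_k$, corresponding to $\{(1+c_{k-1})/2\le u\le c_k\}$, of measure at least $\gamma_0|B_1|$. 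A direct computation shows that these layers are pairwise disjoint across $k$ (the top of the $k$-th layer is $1-\lambda_0^{2k}/2$, strictly less than the bottom of the $(k+1)$-th layer $1-\lambda_0^{2k}/4$), so the second alternative can occur at most $K-1$ times. Therefore the first alternative holds at some $k_0\le K$, and setting $\mu^\ast=\tilde\mu_K$ gives $u\le\mu^\ast<1$ on $(-1,0]\times B_1\supset(-1/2,0]\times B_{1/2}$.

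The main obstacle is verifying that each rescaled function $v_k$ satisfies a local upper energy inequality of the form~\eqref{e:EE-loc+} with constants independent of $k$, so that lemmas~\ref{l:DG1-low} and~\ref{lvi} can be invoked with uniform parameters. Since $G(u)=u^{m-1}$ is nonlinear, the substitution $u\mapsto c_{k-1}+(1-c_{k-1})v_k$ does not preserve the PDE and only rescales the coercive and ``good extra'' terms of~\eqref{e:EE-loc+} up to multiplicative constants which, crucially, remain bounded above and below as $k$ varies because the relevant range of $u$ stays in $[c_{k-1},1]\subset[1/2,1]$, away from the degenerate regime $u=0$; there, $\DG(u(x),u(y))$ is pinched between positive constants depending only on $N,m,\alpha$, so the rescaled energy inequality holds with uniform constants, modulo a harmless power of $(1-c_{k-1})$ absorbed into the bookkeeping of $\tilde\mu_k$. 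If necessary, the scaling invariance of lemma~\ref{l:scaling} provides the supplementary flexibility to realign time and amplitude at each step. The final domain shrinkage from $(-1,0]\times B_1$ to $(-1/2,0]\times B_{1/2}$ is then essentially free, the latter being contained in the former.
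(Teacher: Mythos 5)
Your overall strategy is the one the paper uses (pigeonhole on the number of disjoint intermediate layers produced by lemma~\ref{lvi}, then fall back on lemma~\ref{l:DG1-low}), and the choice-of-constants you propose is actually a cleaner ordering than the paper's. The rescaling $v_k=(u-c_{k-1})/(1-c_{k-1})$ is an affine variant of the paper's $u_j=(u-(1-\lambda^{2j}))/\lambda^{2j}$, and your observation that the constants in~\eqref{e:EE-loc+} remain uniform because $k$ ranges over a finite set $\{1,\dots,K\}$ is essentially Claim~1 of the paper (though the paper tracks the $\lambda^{-4j}$ factor explicitly rather than appealing to a lower bound on $\DG$; $\DG$ being pinched is not what keeps the measure term under control).

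The genuine gap is in the terminal step. When the second alternative fails to occur for some $k_0\le K$, what you actually know is that the failure of hypothesis~4 of lemma~\ref{lvi} gives
\[
\left|\{v_{k_0}>1-\lambda_0^2/2\}\cap(-1,0]\times B_1\right| <\overline\delta_0\,|B_1|,
\]
i.e.\ a ``mostly low'' property only at \emph{late times} on the small cylinder $(-1,0]\times B_1$. But lemma~\ref{l:DG1-low} requires the mostly-low condition on the \emph{full} cylinder $(-2,0]\times B_2$. You cannot apply lemma~\ref{l:DG1-low} directly to $v_{k_0}$, and your statement of the dichotomy at measure $\overline\delta_0|Q|$ with $Q=(-2,0]\times B_2$ does not match the cylinder $(-1,0]\times B_1$ with measure $\delta|B_1|$ appearing in hypothesis~4 of lemma~\ref{lvi} (if the high-values set were concentrated in $(-2,-1]\times(B_2\setminus B_1)$, its $Q$-measure could be large while its $(-1,0]\times B_1$-measure were zero, so neither branch of your dichotomy would be actionable). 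The paper's remedy is a parabolic ``time squeeze'': form $\tilde u(t,x)=u(t/2^\alpha,x/2)$, which is again a solution by lemma~\ref{l:scaling} and whose cylinder $(-2,0]\times B_2$ pulls back into $(-1,0]\times B_1$, where the mostly-low information lives; lemma~\ref{l:DG1-low} is then applied to $\tilde u$, and translating the conclusion back to $u$ produces the shrunken window $(-1/2,0]\times B_{1/2}$. You explicitly dismiss this final domain shrinkage as ``essentially free,'' but it is exactly the price of, and the signal for, the missing rescaling step. Without it the proof does not close.
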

\begin{remark}
Note that the major difference with the first De Giorgi lemma~\ref{l:DG1-low} is that
the value of $\rho$ is now arbitrary while, previously, it was fixed to $\rho=1-\overline\delta$.
Also note that now, as we apply the intermediate values lemma~\ref{lvi}, the full length of~\eqref{e:EE-loc+} is required, \textit{i.e.} the ``good extra term'' plays a crucial role.
Lastly, there is a time-gap (from $t=-1$ to $t=-1/2$) between the third assumption
and the conclusion.
\end{remark}


\begin{proof} The key of the proof consists in applying  lemma~\ref{lvi} to a sequence
of functions until all the space-time available for the intermediary values is spent.
From then on, we will know that $u$ is mostly low-valued on the ``late'' times, \textit{i.e.} on $(-1,0]\times B_1$.
The first De Giorgi lemma \ref{l:DG1-low} will then be applied with a high threshold
and will reduce the maximum, but only on ``late'' times compared to its domain of application.
This step is thus responsible for a small but necessary time-gap between the assumptions
and the conclusion and we can only improve the maximum on $(-1/2,0]\times B_{1/2}$.
The first step consists in checking the assumptions of
lemma~\ref{lvi} on a sequence of ``pushed down and rescaled'' versions of $u$.

  \paragraph{Choice of constants.}
First, we take the values of $\lambda<1/2$ and $\gamma$ given in lemma \ref{lvi}.
Next, we consider \[j_0=\left\lceil\frac{|(-2,0]\times B_1|}{\gamma}\right\rceil.\]
Finally, we take the value $\overline\delta$ given by lemma~\ref{l:DG1-low}
when applied to $\overline\mu=1-\lambda^{2j_0+2}$. 
  
  \paragraph{Claim 1.}
Our first claim is that the functions defined for $1\leq j\leq j_0$ by
\[
u_j(t,x)=\frac{ u(t,x) -(1-\lambda^{2})(1+\lambda^2+\lambda^4+\ldots+\lambda^{2j-2}) }{ \lambda^{2j} }  = \frac{u(t,x)-(1-\lambda^{2j})}{ \lambda^{2j} }
\]
satisfy the local energy estimates \eqref{e:EE-loc+} with uniform constants.
Let us observe that as $j\to\infty$, one has $\lambda^{2j}u_j(t,x) \to u(t,x)-1\leq 0$ on $B_{2^{1/\epsilon_0}}$ so that $u_j$ may take some negative values.
Equivalently, the sequence is defined iteratively by
\begin{eqnarray*}
u_{j+1}(t,x)=\frac{1}{\lambda^2}\left(u_j(t,x)-(1-\lambda^2)\right),
\end{eqnarray*}
starting from $u_1(t,x)=\lambda^{-2}(u(t,x)-(1-\lambda^2))$.

\medskip
For any $c_j>0$, let us repeatedly apply our assumption~\eqref{e:EE-loc+} to the function $u$, with the cut-off constant
\[
c= \lambda^{2j} c_j + (1-\lambda^{2j}) >1/4,
\]
radii $0<r<R<2^{1/\epsilon_0}$ and start and stop times $-2<t_1 < t_2 <0$.
Using \eqref{betterGET} to express the good extra term, we get:
\begin{align*} 
\nonumber
\int_{B_r} & (u-(1-\lambda^{2j})-\lambda^{2j} c_j)_+^2(t_2,x) \dx 
+ \int_{t_1}^{t_2} \left(\int_{B_{r}}(u-(1-\lambda^{2j})-\lambda^{2j} c_j)_+^p(x) \dx\right)^\frac{2}{p} \dt \\
&+ \int_{t_1}^{t_2} \left( \iint_{B_r \times B_r} 
\left(u(x)-(1-\lambda^{2j})-\lambda^{2j} c_j\right)_+
\left(G ((1-\lambda^{2j})+\lambda^{2j} c_j) -G(u(y))\right)_+ 
\dx \dy  \right) \dt 
\nonumber  \\ 
&\lesssim \int_{B_R} (u(t_1,x)-(1-\lambda^{2j})-\lambda^{2j} c_j)_+^2 \dx + C (R-r)^{-2} \left|\{u - (1-\lambda^{2j}) 
> \lambda^{2j} c_j \}\cap (t_1,t_2)\times B_{R} \right|.
\end{align*}
We deduce from the previous inequality that $u_j-c_j=(u-c)/\lambda^{2j}$
satisfies the following local energy estimate
\begin{align*} 
\nonumber
\int_{B_r} & (u_j(t_2,x)- c_j)_+^2 \dx 
+ \int_{t_1}^{t_2} \left(\int_{B_{r}}(u_j(t,x)- c_j)_+^p \dx\right)^\frac{2}{p} \dt \\
&+ \int_{t_1}^{t_2} \left( \iint_{B_r \times B_r} (u_j- c_j)_+ (x) 
(G ((1-\lambda^{2j})+\lambda^{2j} c_j) -G(u(y))) \dx \dy  \right) \dt 
\nonumber  \\ 
&\lesssim \int_{B_R} (u_j(t_1,x)- c_j)_+^2 \dx + C (R-r)^{-2} \lambda^{-4j} \left|\{u_j > c_j \}\cap (t_1,t_2)\times B_{R} \right|.
\end{align*}
As $\lambda^{-1}>1$, we have $\lambda^{-4j}\leq \lambda^{-4j_0}$ if $j\leq j_0$.
We conclude that,  as long as $1\leq j\leq j_0$,
all the functions $u_j$ satisfy the local energy estimates \eqref{e:EE-loc+}
with uniform constants. Moreover, $c_j>0$ can be arbitrary.

\paragraph{Claim 2.} 
We also claim that the early low-values assumption of lemma~\ref{lvi} does hold for $u_j$.
Indeed, as $\lambda<1$ then for any $\mu<1$, the inequality $u_j(t,x) < \mu$ implies $u_{j+1}(t,x) < \mu$ hence
\[
|\{u_j < 1/2 \}\cap (-2,-1]\times B_1| \ge
|\{u_j < \mu \}\cap (-2,-1]\times B_1| \ge
 |\{ u < \mu \}\cap (-2,-1]\times B_1| \ge \rho|B_1|.
\]
As $\mu\leq 1/2$, the early low-values assumption of lemma~\ref{lvi} is satisfied.

\paragraph{Main Step.}  Let us now reason by contradiction.
We assume that for any $j\in[1,j_0]$ one has
\[
|\{ u_j > 1-\lambda^2/2\} \cap (-1,0] \times B_1 | \ge
\overline{\delta} |B_1|.
\]
Then the lemma~\ref{lvi} on intermediate values can be applied to $u_j$ and implies that
\[ |\{ 1/2 \le u_j \le 1-\lambda^2 /2  \} \cap (-2,0] \times B_1 | \ge \gamma |(-2,0] \times B_1|.\]
Translating this for the function $u$, we get 
\[ \left|\left\{ 1 - \frac{\lambda^{2j}}2 \le u \le 1-\frac{\lambda^{2j}}2
    -\frac{\lambda^{2j+1}}2 \right\} \cap (-2,0] \times B_1 \right|
    \ge \gamma \,  |(-2,0] \times B_1|.\]
This implies in particular 
\[ \left|\left\{ 1 - \frac{\lambda^{2j}}2 \le u \le 1 -\frac{\lambda^{2j+2}}2 \right\}
\cap (-2,0] \times B_1 \right|
\ge \gamma \, |(-2,0] \times B_1|.\]
But these intermediate level sets are disjoint and of positive measure so there can be
only at most $j_{0}-1$ of them in the space-time ball $(-2,0] \times B_1$.
The original assumption is false.

\medskip
In particular, there exists $j_1 \le j_0$ such that 
\[
|\{u_{j_1}>1 -\lambda^2/2\}\cap (-1,0]\times B_1| < \overline{\delta} |B_1|.
\]
As $\overline\mu=1-\lambda^{2j_0+2}$, this translates back to $u$ as
\begin{equation}\label{mlw}
|\{u> {\textstyle\frac{1+\overline\mu}{2}}\}\cap (-1,0]\times B_1|
\leq |\{u>1 -\lambda^{2j_1+2}/2\}\cap (-1,0]\times B_1| < \overline{\delta} |B_1|.
\end{equation}
We want to apply the first De Giorgi lemma \ref{l:DG1-low}
to $u$  with $\overline\mu=1-\lambda^{2j_0+2}$.
However, \eqref{mlw} only states that $u$ is ``mostly low valued'' at late times
while lemma \ref{l:DG1-low} requires $u$ to be ``mostly low valued'' for all times.

\medskip
We thus consider $\tilde{u}(t,x)=u(t/2^\alpha,x/2)$,
which satisfies
\[
|\{\tilde{u}> {\textstyle\frac{1+\overline\mu}{2}}\}\cap (-2,0]\times B_2| < \overline{\delta} |B_1|.
\]
because $2^\alpha>2$ (note that we use here again that $\alpha\geq1$).
Applying  lemma \ref{l:DG1-low} to $\tilde{u}$, we get:
\[
\tilde{u}(t,x) \leq \frac{3+\overline{\mu}}{4}=\mu^\ast \quad \text{ on } (-1,0]\times B_1
\]
with $\mu^* = 1- \lambda^{2(j_0+1)}/4$.
Hence $u(t,x) \leq \mu^*$ on $(-1/2,0]\times B_{1/2}$.
\end{proof}

\section{Proof of the main theorem}\label{mainproof}

In this section, we alternatively use 
the lemma of De Giorgi on increasing the infimum (lemma~\ref{l:DG-pull})
and the improved lemma about lowering the maximum (lemma~\ref{l:DG1-low-improved})
in order to prove theorem~\ref{thm:main}.
\begin{proof}[Proof of theorem~\ref{thm:main}]

\bigskip
  We now consider a weak solution of \eqref{e:main}-\eqref{e:ic}
 associated with an initial data
 \[ u_0\in L^1\cap L^\infty(\RN;\mathbb{R}_+).\] 
We know from \cite[{theorem 2.6}]{BIK}, which we recalled here as theorem~\ref{existence},
  that this solution is globally bounded in
  $[0,+\infty) \times \R^N$, by a constant $M$ that depends on
  $\|u_0\|_{L^1 (\R^N)}, \|u_0\|_{L^\infty (\R^N)}$. 
  To prove theorem~\ref{thm:main}, we want to study its H\"older regularity on
  some interval $[T_0,T_1]$ with $0<T_0<T_1$.
  
  \medskip
  When $\alpha\in(0,1]$, one replaces $u$ by the properly drifted $\bar{u}(t,x)$ defined by~\eqref{uDrift1}-\eqref{uDrift2} so that
  the upper local energy-inequality  \eqref{e:EE-loc+} holds. The scaling transforms that appear in the rest of the proof can be
  adapted to the drift term, as in \cite[\S12.3]{CSV}.
  
 \medskip
  We can translate the time interval
    and study the equation in $(-2,0] \times \R^N$. It is then sufficient to prove that it
    is H\"older continuous at the point~$(t_0,x_0)=(0,0)$. Using the scale invariance,
    we can assume without loss of generality that~$M =1$
    (by choosing $A=1/M$ and $C=M^{\frac{m-1}\alpha}$ in \eqref{scaling}).  In particular,
  \begin{equation}\label{mildgrowth}
  0\leq u(t,x) \le 1 + \Psi_{\eps_0} (x) \quad \text{ for } (t,x) \in
  (-2,0]\times \R^N.
  \end{equation}
where $\Psi_{\eps_0}$ is the mildly-growing function defined by~\eqref{defpsiepsilon}.

\medskip
 In order to apply lemma~\ref{l:holder},
 we are going to prove that the oscillation of $u$ around the point
  $(t_0,x_0)=(0,0)$ decays algebraically on $(-r,0]\times B_r$
  as $r\to0$.
 More precisely, we will show subsequently that if the solution
 $u$ satisfies~\eqref{mildgrowth}, then
 $\underset{(-1,0]\times B_1}{\operatorname{osc}u} \leq \omega_0<1$.
 Thanks to \eqref{scaling} we can then construct a sequence of rescaled solutions
 \[
 v_{n+1}(t,x) = \frac{ v_n(t/\tau_n, x/\kappa_n) }{ (\tau_n/\kappa_n^\alpha)^{1/(m-1)}} \qquad\text{with}\qquad
 v_1(t,x)=u(t,x)
 \]
and scaling parameters $\tau_n,\kappa_n\geq2$ such that $\tau_n/\kappa_n^\alpha \leq 1$. One can adjust
the parameters such that all the $v_{n}$ satisfy \eqref{mildgrowth}.
Note that the values of the pair $(\tau_n,\kappa_n)$ can alternate between a few
universal choices from one iteration to the next, but overall, it has no detrimental effect.

Iterating this construct gives
a dyadic formulation of the assumption of lemma~\ref{l:holder}, which can then
ultimately be applied to $u$ and provides the desired H\"older regularity.
  
\medskip
Let us now explain the fine details of the process that reduces the oscillation of $v_{n}$ on $(-1,0]\times B_1$. We consider an increasing sequence of thresholds
\[
\mu_n = 1- \frac{1}{2^n}\cdotp
\]
We take $\delta_n$ to be the value of $\underline{\delta}$ associated
 with $\underline{\smash\mu}=\mu_n$ by  lemma~\ref{l:DG-pull}.
 We will successively distinguish two mutually exclusive cases. 
\nopagebreak

\medskip\noindent
$\bullet$ The first possibility is that
\begin{equation}\label{firstoption}
| \{v_n \geq (1+2\mu_n)/3\} \cap (-2,-1]\times B_2 | \geq
(1-\delta_n)|(-2,0) \times B_2|.
\end{equation}
In particular, one has
\[
| \{v_n \geq (1+2\mu_n)/3\} \cap (-2,0]\times B_2 | \geq
(1-\delta_n)|(-2,0) \times B_2|.
\]
In this case, we can apply lemma~\ref{l:DG-pull} with $\underline{\smash\mu}=\mu_n$
 and get that $u$ satisfies
\[ v_n(t,x) \geq \mu_n \text{ in } (-1,0] \times B_1.\]
The oscillation of $v_n$ has thus decreased from $1$ on $(-2,0]\times B_2$
to $\underset{(-1,0]\times B_1}{\operatorname{osc}v_n} \leq 1-\mu_n = 2^{-n}$.

\medskip
For the subsequent rescaling, we take
\[
v_{n+1}(t,x)= v_n (t/\tau,x/\kappa) \qquad\text{with}\qquad 
\kappa=2 \quad\text{and}\quad
\tau=\kappa^\alpha\geq 2
\]
which, according to~\eqref{scaling}, is also a solution of \eqref{e:main}.
Moreover, it satisfies
\begin{equation}
0\leq (1-2^{-n})\,\un_{B_2}(x)
\leq v_{n+1}(t,x) \leq 1+\Psi_{\eps_0}(x) \qquad\text{on}\qquad (-2,0]\times\RN
\end{equation}
so in particular~\eqref{mildgrowth} holds again for $v_{n+1}$.

\medskip\noindent
$\bullet$ In case \eqref{firstoption} fails, the alternative reads
\begin{equation}\label{secondoption}
 |\{v_n \geq (1+2\mu_n)/3\} \cap (-2,-1]\times B_2 | <
(1-\delta_n)|(-2,0] \times B_2|
\end{equation}
which implies
\[
 |\{v_n < (1+2\mu_n)/3\} \cap (-2,-1]\times B_2 | \geq
2 \delta_n |B_2|.
\]
In this case, we can apply lemma~\ref{l:DG1-low-improved} with $\mu=(1+2\mu_n)/3$ and $\rho=2\delta_n$
and get that 
\[ v_n(t,x) \le \mu^* \text{ in } (-1/2,0] \times B_{1/2}.\]
The oscillation of $v_n$ has thus decreased
to $\underset{(-1/2,0]\times B_{1/2}}{\operatorname{osc}v_n} \leq \mu^\ast$.

\medskip
We then consider the function
\[ v_{n+1} (t,x) = \frac{v_n (t/\tau, x/\kappa)}{\mu^*}\]
with $\tau = (\mu^*)^{m-1} \kappa^\alpha$.
Note that $\tau/\kappa^\alpha<1$.
Thanks to \eqref{scaling}, we know that $v_{n+1}$ is
still a weak solution of \eqref{e:main} and that
\[\begin{cases}
 v \le 1 & \text{ in } (-\tau/2,0] \times B_{\kappa/2} , \\
 v \le \frac{(|x/\kappa|^{\eps_0}-2)_++1}{\mu^*} & \text{ in } (-\tau,0] \times \R^N.
\end{cases}\]
It is not difficult to check that for $\kappa \ge 2(1 + (\mu^*)^{-1})^{1/\eps}>4$,
then
\[
\frac{(|x/\kappa|^{\eps_0}-2)_++1}{\mu^*}  \leq 1 + \Psi_{\eps_0}(x) \qquad\text{ouside }B_{\kappa/2}.
\]
By also choosing $\kappa \ge (4(\mu^*)^{-(m-1)})^{1/\alpha}$, we get $\tau \ge 4$ and therefore
\[ 0\leq v_{n+1}(t,x) \le 1 + \Psi_{\eps_0} (x) \qquad\text{on}\qquad (-2,0] \times \R^N.\]
This concludes the treatment of the alternative case~\eqref{secondoption}.

\bigskip
Conclusion. In both cases \eqref{firstoption}-\eqref{secondoption},
we have reduced the oscillation of $u$ by at least a
universal factor \[\omega_0= \frac12 \wedge \mu^*<1\] and proposed
a universal rescaling process that brings us back to the initial situation \eqref{mildgrowth}.
As explained in the introduction of this proof,
the oscillation then decays algebraically when zooming in and
this fact achieves the proof of the main theorem.
\end{proof}

\appendix
\section{Useful inequalities}

\begin{lemma}
The following inequalities are valid for any $m>1$
\begin{equation}\label{SV}
\forall a,b\geq0,\qquad
(a-b)(a^{m-1}-b^{m-1}) \geq {\textstyle \frac{4(m-1)}{m^2}} \left(a^{\frac{m}{2}}-b^{\frac{m}{2}}\right)^2
\end{equation}
\begin{equation}\label{SV2}
\forall a,b\geq0,\qquad
(a^2-b^2)(a^{m-1}-b^{m-1}) \geq {\textstyle \frac{8(m-1)}{(m+1)^2}} \left(a^\frac{m+1}{2}-b^\frac{m+1}{2}\right)^2
\end{equation}
and
\begin{equation}\label{VF}
\forall a,b\geq c>0,\qquad
(a-b)^2 \leq c^{-(m-1)}(a^{\frac{m+1}{2}} - b^{\frac{m+1}{2}})^2.
\end{equation}
\end{lemma}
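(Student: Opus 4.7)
The plan is to prove all three inequalities by the same elementary device: write each difference $a^p-b^p$ as an integral over $[b,a]$ (WLOG $a\geq b\geq 0$), then apply Cauchy--Schwarz or a monotonicity bound in the integrand. This reduces everything to choosing the right factorization of the integrand.

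For \eqref{SV}, I would assume $a\geq b\geq 0$ and write
\[
a^{m/2}-b^{m/2}=\frac{m}{2}\int_b^a s^{m/2-1}\,ds=\frac{m}{2}\int_b^a 1\cdot s^{(m-2)/2}\,ds.
\]
Cauchy--Schwarz applied with the trivial split $s^{(m-2)/2}=1\cdot s^{(m-2)/2}$ yields
\[
\left(\int_b^a s^{(m-2)/2}\,ds\right)^{\!2}\leq (a-b)\int_b^a s^{m-2}\,ds=\frac{(a-b)(a^{m-1}-b^{m-1})}{m-1},
\]
and rearranging gives exactly the constant $4(m-1)/m^2$.

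For \eqref{SV2}, the same method works with a slightly smarter factorization. Write
\[
a^{(m+1)/2}-b^{(m+1)/2}=\frac{m+1}{2}\int_b^a s^{(m-1)/2}\,ds=\frac{m+1}{2}\int_b^a s^{1/2}\cdot s^{(m-2)/2}\,ds,
\]
and apply Cauchy--Schwarz to this product. This produces $\int_b^a s\,ds=\tfrac12(a^2-b^2)$ and $\int_b^a s^{m-2}\,ds=(a^{m-1}-b^{m-1})/(m-1)$, and the constant $8(m-1)/(m+1)^2$ pops out directly.

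For \eqref{VF}, no Cauchy--Schwarz is needed: for $a\geq b\geq c>0$ and $m>1$, the integrand $s^{(m-1)/2}$ in
\[
a^{(m+1)/2}-b^{(m+1)/2}=\frac{m+1}{2}\int_b^a s^{(m-1)/2}\,ds
\]
is monotone increasing, hence bounded below on $[b,a]$ by $c^{(m-1)/2}$. This gives $a^{(m+1)/2}-b^{(m+1)/2}\geq \tfrac{m+1}{2}c^{(m-1)/2}(a-b)$, and squaring yields $(a-b)^2\leq \tfrac{4}{(m+1)^2}c^{-(m-1)}(a^{(m+1)/2}-b^{(m+1)/2})^2$. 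Since $m>1$ implies $4/(m+1)^2\leq 1$, the stated inequality follows. I expect no real obstacle: the only thing to double-check is the arithmetic of the Cauchy--Schwarz constants and that the constant $4/(m+1)^2$ in \eqref{VF} is indeed $\leq 1$ in the whole admissible range.
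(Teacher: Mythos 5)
Your proof is correct, and it takes a genuinely different route from the paper. The paper reduces to a one-variable problem by setting $\theta=b/a$ and then carries out a fairly delicate study of the three ratio functions
\[
f(\theta)=\frac{(1-\theta^{m/2})^2}{(1-\theta)(1-\theta^{m-1})},\qquad
g(\theta)=\frac{(1-\theta^{\frac{m+1}{2}})^2}{(1-\theta^2)(1-\theta^{m-1})},\qquad
h(\theta)=\frac{1-\theta}{1-\theta^{\frac{m+1}{2}}},
\]
establishing their suprema by Taylor expansion at $\theta=1$ and sign analysis of their derivatives. Your approach replaces all of that with integral representations of the differences $a^p-b^p$ and a single application of Cauchy--Schwarz (for \eqref{SV} and \eqref{SV2}) or of the monotonicity of the integrand (for \eqref{VF}); the constants fall out mechanically, with no calculus of ratio functions needed. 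It is shorter and, in my view, cleaner, and it makes transparent exactly where each power enters (the factorizations $s^{(m-2)/2}=1\cdot s^{(m-2)/2}$ and $s^{(m-1)/2}=s^{1/2}\cdot s^{(m-2)/2}$). The one thing the paper's approach delivers beyond the lemma as stated is the converse inequalities recorded in the remark (that $f,g\geq1$, i.e.\ upper bounds on the right-hand sides), which would require a separate argument in your framework; but those are not part of the lemma itself. Minor points to be sure of: the convergence of $\int_b^a s^{m-2}\,ds$ near $b=0$ uses $m>1$, which you have; and the symmetry in $a,b$ of all three inequalities justifies the reduction to $a\geq b$.
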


\begin{remark}
The following proof also shows that converse inequalities to \eqref{SV}-\eqref{SV2} are also true:
\begin{gather}
(a-b)(a^{m-1}-b^{m-1}) \leq \left(a^{\frac{m}{2}}-b^{\frac{m}{2}}\right)^2\\
(a^2-b^2)(a^{m-1}-b^{m-1}) \leq \left(a^\frac{m+1}{2}-b^\frac{m+1}{2}\right)^2
\end{gather}
for any $a,b\geq0$.
\end{remark}
\begin{proof}
When $a=0$, all inequalities are obvious, at least once we observe that
\[
m^2-4(m-1)=(m-2)^2\geq0 \qquad\text{and}\qquad
(m+1)^2-8(m-1)=(m-3)^2\geq0.
\]
Again, they are also true when $a=b$.
We can thus assume that $a\neq0$ and $a\neq b$ and consider $\theta=b/a\in \mathbb{R}_+$
but with $\theta\neq1$. We claim that the functions
\[
f(\theta)=\frac{(1-\theta^{\frac{m}{2}})^2}{(1-\theta)(1-\theta^{m-1})},\qquad
g(\theta)=\frac{(1-\theta^{\frac{m+1}{2}})^2}{(1-\theta^2)(1-\theta^{m-1})} 
\qquad\text{and}\qquad
h(\theta)=\frac{1-\theta}{1-\theta^{\frac{m+1}{2}}}
\]
are continuous through $\theta=1$ and
satisfy $\|f\|_{L^\infty(\mathbb{R}_+)}\leq \frac{m^2}{4(m-1)}$, $\|g\|_{L^\infty(\mathbb{R}_+)}\leq\frac{(m+1)^2}{8(m-1)}$
and $\|h\|_{L^\infty(\mathbb{R}_+)}\leq1$.
The inequalities~\eqref{SV}-\eqref{SV2} then follow from $(1-\theta)(1-\theta^{m-1})>0$ and
$(1-\theta^2)(1-\theta^{m-1})>0$
while~\eqref{VF} comes from
\[
(a-b)^2 = a^2 g(\theta)^2 (1-\theta^{\frac{m+1}{2}})^2 \leq
a^{-(m-1)} (a^{\frac{m+1}{2}} - b^{\frac{m+1}{2}})^2 \|g\|_{L^\infty}^2
\]
and the final restriction $a\geq c$.

\medskip
To back up our claim, let us briefly study the functions $f$, $g$ and $h$.
The continuity around $\theta=1$ comes from a simple Taylor expansion:
\begin{align*}
f(\theta) &= \frac{m^2}{4(m-1)} - \frac{m^2(m-2)^2}{192(m-1)}(\theta-1)^2+ O[(\theta-1)^3]\\
g(\theta) &= \frac{(m+1)^2}{8(m-1)} -\frac{(m+1)^2(m-3)^2}{384(m-1)}(\theta-1)^2 + O[(\theta-1)^3]\\
h(\theta)&=\frac{2}{m+1}-\frac{m-1}{2(m+1)}(\theta-1)+O[(\theta-1)^2].
\end{align*}

Moreover, one can check that
$f(\theta)$ and $g(\theta)$ reach a global maximum when $\theta=1$
while $h(\theta)$ is maximal at $\theta=0$, \textit{i.e.}:
\[\forall \theta>0, \qquad
\begin{cases}
1\leq f(\theta) \leq f(1),\\
1\leq g(\theta) \leq g(1),\\
0\leq h(\theta)\leq h(0).
\end{cases}
\]
The lower values follow from the limits at $0$ and $+\infty$, once we know the variations of $f,g,h$.

For example, for any $\theta>0$, one has
\[
h'(\theta)= -\frac{(m-1)\theta^{\frac{m+1}{2}} - (m+1)\theta^{\frac{m-1}{2}} + 2}{2\theta\left(1-\theta^{\frac{m+1}{2}}\right)^2}\leq0
\]
because $(m-1)\theta^{\frac{m+1}{2}} - (m+1)\theta^{\frac{m-1}{2}} + 2$ is a positive function
that vanishes only for $\theta=1$. Indeed, we can rewrite it as a balance of two signed terms
\[
(m-1)\theta^{\frac{m+1}{2}} - (m+1)\theta^{\frac{m-1}{2}} + 2
= \theta^{\frac{m-1}{2}} (m-1)(\theta-1) + 2 (1-\theta^{\frac{m-1}{2}})
\]
whose derivative is
\[
\frac{d}{d\theta}\left[ \theta^{\frac{m-1}{2}} (m-1)(\theta-1) + 2 (1-\theta^{\frac{m-1}{2}}) \right]
= \theta^{\frac{m-3}{2}}(m-1)(\theta-1)\left(\theta+\frac{m+1}{2}\right)
\]
and has therefore the same sign as $\theta-1$.

Similarly, one has $f'(\theta)=\frac{(1-\theta^{m/2})(1-\theta^{(m-2)/2})\mathfrak{F}(\theta)}{(1-\theta)^2(1-\theta^{m-1})^2}$
with $\mathfrak{F}(\theta)= 1-\theta^{m-1}+(m-1)\theta^{m/2}(1-\theta^{-1})$, which (based on
a quick study of $\mathcal{F}'$) has the sign of $(m-1)(2-m)(\theta-1)$.
Therefore $f'(\theta)$ has the same sign as
\[(m-1)(2-m)(\theta-1)(1-\theta^{m/2})(1-\theta^{(m-2)/2})\]
which, for $m>1$ is positive on $(0,1)$ and negative on $(1,+\infty)$.

In the same spirit, one gets $g'(\theta)=\frac{(1-\theta^{\frac{m+1}{2}})\mathfrak{G}(\theta)}{\theta^2(1-\theta^2)^2(1-\theta^{m-1})^2}$
for some auxiliary function $\mathfrak{G}(\theta)\geq0$ and thus $g'(\theta)$ has the same sign as $1-\theta^{\frac{m+1}{2}}$.
\end{proof}

\bibliographystyle{siam}
\bibliography{regularitybiblio}

\end{document}